\newtheorem{thm}{Theorem}[section]
\newtheorem*{thm*}{Theorem}
\newtheorem{cor}[thm]{Corollary}
\newtheorem*{cor*}{Corollary}
\newtheorem{prop}[thm]{Proposition}
\newtheorem*{prop*}{Proposition}
\newtheorem{lem}[thm]{Lemma}
\newtheorem*{lem*}{Lemma}
\newtheorem{conj}[thm]{Conjecture}
\newtheorem*{conj*}{Conjecture}
\newtheorem{quest}[thm]{Question}
\newtheorem*{quest*}{Question}
\theoremstyle{definition}
\newtheorem{defn}[thm]{Definition}
\newtheorem*{defn*}{Definition}
\newtheorem{exmp}[thm]{Example}
\theoremstyle{remark}
\newtheorem{rem}[thm]{Remark}
\newtheorem*{ack*}{Acknowledgements}
\newcommand{\stmodsf}{\underline{\mathsf{mod}}\ \!} 
\newcommand{\modsf}{\mathsf{mod}\ \!} 
\newcommand{\Modsf}{\mathsf{Mod}\ \!} 
\newcommand{\N}{\mathbb{N}} 
\newcommand{\Z}{\mathbb{Z}} 
\newcommand{\m}{\mathfrak{m}} 
\newcommand{\depth}{{\rm depth}\ \!} 
\newcommand{\dell}{{\rm dell}\ \!} 
\newcommand{\grade}{{\rm grade}\ \!} 
\newcommand{\pdim}{{\rm projdim}\ \!} 
\newcommand{\idim}{{\rm injdim}\ \!} 
\newcommand{\fdim}{{\rm flatdim}\ \!} 
\newcommand{\findim}{{\rm findim}\ \!} 
\newcommand{\Findim}{{\rm Findim}\ \!} 
\newcommand{\rad}{\mathsf{rad}\ \!} 
\title[The depth, the delooping level and the finitistic dimension]{The depth, the delooping level \\ and the finitistic dimension} 
\author{Vincent G\'elinas}
\address{vincent.gelinas@mail.utoronto.ca} 
\date{}
\begin{document}

\begin{abstract} 
	We investigate two invariants of Noetherian semiperfect rings, namely the depth and a new invariant we call the ``delooping level''. These give lower and upper bounds for the finitistic dimension, respectively. As first theorems, we give a necessary and sufficient criterion for the delooping level to be finite in terms of the splitting of the unit map of a related adjunction, and use this to give a sufficient torsionfreeness criterion for finiteness. We further relate these invariants to the Auslander--Bridger grade conditions for modules, which are vanishing conditions on double Ext duals. As main theorem, we prove that these bounds agree whenever the first non-trivial grade conditions are satisfied for simple modules, so that either invariant computes the finitistic dimension in this case. Over Artinian rings, we show that the delooping level also bounds the big finitistic dimension, and we obtain a sufficient cohomological criterion for the first finitistic dimension conjecture to hold. Over commutative local Noetherian rings, these conditions always hold and we obtain a new characterisation of the depth as the delooping level of the ring. 
\end{abstract} 

\maketitle 
\tableofcontents
\setlength{\parskip}{5pt}

\section*{Introduction}\label{introduction} 
The finitistic dimension is a central invariant in the homological theory of both noncommutative Artin algebras and commutative local Noetherian rings. However, our knowledge of this invariant varies drastically between the two settings. 

Over Artin algebras $\Lambda$, finiteness of the finitistic dimension implies many long standing homological conjectures (generalised Nakayama conjecture, Auslander--Reiten conjecture, Auslander--Gorenstein conjecture, Gorenstein symmetry conjecture, Wakamatsu tilting conjecture). The finiteness of the finitistic dimension itself remains an open problem and is known as the second finitistic dimension conjecture. One also distinguishes the finitistic dimension $\findim \Lambda$, or little finitistic dimension, from the big finitistic dimension $\Findim \Lambda$: the former is the supremum of projective dimensions taken over finitely presented modules of finite projective dimension, while the latter is taken over all modules of finite projective dimension. The first finitistic dimension conjecture for Artin algebras then states that $\findim \Lambda = \Findim \Lambda$. 

Over commutative local Noetherian rings $R = (R, \m, k)$ the finitistic dimensions are well-understood in terms of basic invariants: the Auslander--Buchsbaum formula shows the little finitistic dimension equals the depth of the ring, and the big finitistic dimension is equal to the Krull dimension by work of Bass \cite{Bass62} and Gruson--Raynaud \cite{GR71}. Both dimensions are then finite and bounded above by $\dim_k \m/\m^2$, and the analog of the first finitistic dimension conjecture holds if and only if the ring is Cohen-Macaulay; in particular it holds in the Artinian case.  

By contrast, Huisgen-Zimmermann showed that the first finitistic dimension conjecture admits counterexamples over noncommutative Artin algebras \cite{HZ92}. Additionally, while the natural extension of the depth to Artin algebras $\Lambda$ satisfies the inequality $\depth \Lambda \leq \findim \Lambda^{op}$ due to Jans \cite{Ja61}, equality fails to hold in general, and the difference between the two values can be arbitrarily large. Trying to understand this break in behavior was the motivation for this article. 

In this paper, we make the observation that the equality $\depth R = \findim R$ over commutative local Noetherian rings actually conflates together two invariants: the depth of $R$, and an invariant we call the delooping level of $R$ and denote $\dell R$. Both invariants make sense over general Noetherian semiperfect rings $\Lambda$, and satisfy inequalities 
\begin{align*}
	\depth \Lambda \leq \findim \Lambda^{op} \leq \dell \Lambda. 
\end{align*} 
When $\Lambda$ is Artinian, these are strenghtened to 
\begin{align*}
	\depth \Lambda \leq \findim \Lambda^{op} \leq \Findim \Lambda^{op} \leq \dell \Lambda. 
\end{align*}

The commutative case is then one particular situation when equalities occur: 

\begin{thm*}[Thm. \ref{commutativelocalnoeth}] Let $R$ be a commutative local Noetherian ring. We have equalities 
	\begin{align*}
		\depth R = \findim R = \dell R. 
	\end{align*} 
\end{thm*}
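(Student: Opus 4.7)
The plan is to deduce this theorem from the paper's main theorem, once one verifies that its hypothesis---the Auslander--Bridger grade conditions on simple modules---is automatic in the commutative local Noetherian setting.

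Since $R$ is commutative we have $R = R^{op}$, and the standing inequalities read $\depth R \leq \findim R \leq \dell R$. Since $R$ is local, the only simple $R$-module up to isomorphism is $k = R/\m$, so $\dell R = \dell k$ and only the grade conditions at $k$ need to be verified. These are vanishing statements for the double Ext duals $\mathrm{Ext}^j_R(\mathrm{Ext}^i_R(k,R), R)$ in the range $j < i$, which are equivalent to $\grade_R \mathrm{Ext}^i_R(k, R) \geq i$ via the identification of grade with the infimum of non-vanishing Ext indices. This inequality is classical over commutative local Noetherian rings and in fact holds with $k$ replaced by any f.g. module $M$: the support of $\mathrm{Ext}^i_R(M, R)$ lies in codimension $\geq i$ in $\spec R$, and grade agrees with codimension of support in the commutative Noetherian setting (via local duality, or a direct regular-sequence argument).

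With the hypothesis of the main theorem verified, we obtain $\findim R = \dell R$. The remaining equality $\depth R = \findim R$ is the classical Auslander--Buchsbaum formula: any f.g.\ $R$-module $M$ of finite projective dimension satisfies $\pdim_R M = \depth R - \depth_R M \leq \depth R$, with equality attained e.g.\ by $M = R/(\underline{x})$ for a maximal $R$-regular sequence $\underline{x} \subset \m$.

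The only real obstacle I anticipate is a matter of bookkeeping---aligning the paper's precise formulation of the ``first non-trivial grade conditions'' with the commutative form of the Auslander--Bridger inequality $\grade \mathrm{Ext}^i \geq i$. Once this alignment is made, the required commutative-algebra input is entirely standard, and the new content of the result is really the identification of the delooping level (rather than only the depth) as the third member of the equality, made possible by the general framework.
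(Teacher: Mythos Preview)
Your overall strategy---verify the grade condition for the residue field and invoke the main theorem---is exactly the paper's approach. However, two points need correction.

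First, your verification of the grade condition is flawed. You claim that $\grade {\rm Ext}^i_R(M,R) \geq i$ holds for every finitely generated $M$ and every $i$, arguing via codimension of support. Both ingredients fail over a general commutative local Noetherian ring: grade coincides with codimension of support only in the Cohen--Macaulay case, and the support of ${\rm Ext}^i_R(M,R)$ need not lie in codimension $\geq i$ (over a non-Gorenstein local Artinian ring, ${\rm Ext}^1_R(k,R)$ is nonzero while every prime has height zero). Indeed, the paper records (Example~\ref{AuslanderBridgerproposition}) that your claimed inequality for all $M$ and all $i \leq n$ is \emph{equivalent} to $R_{\mathfrak p}$ being Gorenstein whenever $\depth R_{\mathfrak p} < n$; it is far from automatic. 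The correct and much easier argument is the one in Example~\ref{commutativefirstnontrivialcondition}: only the \emph{first non-trivial} condition is needed, namely $\grade {\rm Ext}^{j_k}_R(k,R) \geq j_k$ for $j_k = \depth R$. Since ${\rm Ext}^{j_k}_R(k,R)$ is a finite-length module supported at $\m$, its grade equals $\grade k = \depth R = j_k$, and we are done.

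Second, once the hypothesis is verified, Theorem~\ref{theoremthree} yields all three equalities $\depth R = \findim R = \dell R$ at once, not merely $\findim R = \dell R$. Your separate appeal to the Auslander--Buchsbaum formula is therefore unnecessary---and the paper explicitly stresses that this route gives a proof of $\depth R = \findim R$ \emph{independent} of Auslander--Buchsbaum, which is part of the point.
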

In the general case, equality is controlled by certain double Ext modules. Letting $S \in \modsf \Lambda$ be a simple module, define the index 
\begin{align*}
j_S := \inf \{ i \geq 0 \ | \ {\rm Ext}^i(S, \Lambda) \neq 0 \} \in \Z \cup \{ \infty \} 
\end{align*}
and consider the Ext-vanishing conditions 
\begin{align}\label{Extvanishing}
	{\rm Ext}^j({\rm Ext}^{j_S}(S, \Lambda), \Lambda) = 0 \textup{ for } j < j_S. \tag{$*$} 
\end{align}
We take these to hold vacuously when $j_S = \infty$. Conditions of this type were first considered by Bass \cite{Bass63} and Auslander--Bridger \cite{AB69}. The conditions (\ref{Extvanishing}) are easily seen to hold for simple modules over commutative Noetherian rings, and the above theorem is then a special case of our main theorem: 

\begin{thm*}[Thm. \ref{theoremthree}] Let $\Lambda$ be a Noetherian semiperfect ring, and assume simple modules satisfy conditions \textup{(\ref{Extvanishing})}. Then we have equalities 
\begin{align*}
	\depth \Lambda = \findim \Lambda^{op} = \dell \Lambda. 
\end{align*} 
If $\Lambda$ is furthermore Artinian, then the stronger equalities hold: 
\begin{align*}
	\depth \Lambda = \findim \Lambda^{op} = \Findim \Lambda^{op} = \dell \Lambda. 
\end{align*} 

\end{thm*}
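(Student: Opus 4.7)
The chain $\depth \Lambda \leq \findim \Lambda^{op} \leq \dell \Lambda$, together with its Artinian strengthening $\depth \Lambda \leq \findim \Lambda^{op} \leq \Findim \Lambda^{op} \leq \dell \Lambda$, is already in hand from the introduction, so both assertions of the theorem reduce to establishing the single converse inequality
$$\dell \Lambda \leq \depth \Lambda$$
under hypothesis \textup{(\ref{Extvanishing})}.

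My plan is to exploit this hypothesis one simple module at a time, via Auslander--Bridger biduality. Recall the standard identification $\depth \Lambda = \inf_S j_S$ taken over simple $\Lambda$-modules $S$, and set $d := \depth \Lambda$. For each simple $S$ with $j_S$ finite, put $N_S := {\rm Ext}^{j_S}(S, \Lambda) \in \modsf \Lambda^{op}$; the content of \textup{(\ref{Extvanishing})} is then exactly that $\grade N_S \geq j_S$. The biduality spectral sequence
$$E_2^{p,q} = {\rm Ext}^p\bigl({\rm Ext}^q(S,\Lambda), \Lambda\bigr) \;\Longrightarrow\; \delta_{p,q}\, S$$
accordingly vanishes on the strict upper triangle of the row $q = j_S$, so the edge map ${\rm Ext}^{j_S}(N_S, \Lambda) \to S$ must surject onto $S$. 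By the Auslander--Bridger characterisation of $k$-torsionfree modules as stable $k$-th syzygies, $N_S$ is stably isomorphic to a $j_S$-th syzygy of its transpose ${\rm Tr}\, N_S$; pulling this syzygy presentation back through the bidual map realises $S$ as a composition factor sitting on top of $\Omega^{j_S}({\rm Tr}\, N_S)$, which is the shape of a $j_S$-step delooping of $S$. This yields the per-simple bound $\dell S \leq j_S$.

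The main obstacle is to upgrade these module-level bounds to the ring-level inequality $\dell \Lambda \leq d$. Since $\dell \Lambda$ aggregates delooping data across \emph{all} simples (and $\depth \Lambda$ is only an infimum), a naive supremum of the bounds $\dell S \leq j_S$ would be too weak. To close this gap I would invoke the adjunction/unit-splitting criterion advertised in the abstract: once (\ref{Extvanishing}) is available uniformly for every simple, the individual syzygy presentations of the $N_S$ should assemble into a splitting of the unit of the relevant adjunction in degree $d$, which forces $\dell \Lambda \leq d$ globally rather than merely at each simple separately. Combined with the known inequalities, this establishes all the equalities in the Noetherian semiperfect case. The Artinian strengthening is then immediate from the refined chain $\Findim \Lambda^{op} \leq \dell \Lambda$ already available in that setting.
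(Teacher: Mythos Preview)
Your argument rests on a misreading of the paper's definition of depth. In this paper $\depth \Lambda := \sup_S \grade S$, not $\inf_S \grade S$ (see the definition immediately following the introduction of grade in Section~\ref{sectiondepthdelooping}). Consequently the ``main obstacle'' you describe does not exist: once one has the per-simple bound $\dell S \leq j_S = \grade S$, taking suprema over simples gives $\dell \Lambda = \sup_S \dell S \leq \sup_S \grade S = \depth \Lambda$ immediately. This is exactly the paper's proof of Theorem~\ref{theoremthree}, which is two lines once Proposition~\ref{keyproposition} is in hand. Your proposed ``assembly'' step via a global unit splitting is neither necessary nor, as written, meaningful.

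Separately, your route to the per-simple bound $\dell S \leq j_S$ is not sound. The biduality spectral sequence $E_2^{p,q} = {\rm Ext}^p({\rm Ext}^q(S,\Lambda),\Lambda)$ does not in general converge to $S$ concentrated on the diagonal; one only has a natural map $S \to R{\rm Hom}(R{\rm Hom}(S,\Lambda),\Lambda)$, and convergence to $S$ requires hypotheses not available here. More seriously, the hypothesis $\grade N_S \geq j_S$ says ${\rm Ext}^i_{\Lambda^{op}}(N_S,\Lambda) = 0$ for $i < j_S$, which is \emph{not} the condition for $N_S$ to be $j_S$-torsionfree (that would require vanishing of ${\rm Ext}^i_\Lambda({\rm Tr}\,N_S,\Lambda)$ for $1 \leq i \leq j_S$), so your claim that $N_S$ is stably a $j_S$-th syzygy does not follow. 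The paper's actual argument (Proposition~\ref{keyproposition}) goes through the Spherical Filtration Theorem to show that $\Sigma^{j_S}\Omega^{j_S} S$ is torsionfree, then invokes the torsionfreeness criterion; this is the substantive step, and your proposal does not supply a valid substitute for it.
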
  

In the case that $\Lambda = R$ is local commutative, this theorem gives a new proof of the equality $\depth R = \findim R$, independent of the Auslander--Buchsbaum formula. The full result then extends such Auslander--Buchsbaum type equalities to the general setting. In the noncommutative Artinian case, the equality $\findim \Lambda^{op} = \Findim \Lambda^{op}$ also provides a positive case of the first finitistic dimension conjecture. 

\begin{cor*}[Cor. \ref{firstfinitisticdimensionconjecture}] Let $\Lambda$ be an Artinian ring such that the simple modules satisfy condition \textup{(\ref{Extvanishing})}. Then the first finitistic dimension conjecture holds for $\Lambda^{op}$. 
\end{cor*}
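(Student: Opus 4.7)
The plan is to derive the corollary as a direct consequence of the Artinian case of Theorem \ref{theoremthree}. Under the stated hypotheses, that theorem already asserts the full chain of equalities
\begin{align*}
	\depth \Lambda = \findim \Lambda^{op} = \Findim \Lambda^{op} = \dell \Lambda,
\end{align*}
and the first finitistic dimension conjecture for $\Lambda^{op}$ is literally the middle equality $\findim \Lambda^{op} = \Findim \Lambda^{op}$. So the proof reduces to invoking Theorem \ref{theoremthree} and extracting this equality.

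To unpack the logic: the introduction records the general inequalities $\depth \Lambda \leq \findim \Lambda^{op} \leq \Findim \Lambda^{op} \leq \dell \Lambda$ valid for any Artinian ring. The hypothesis that simple modules satisfy condition (\ref{Extvanishing}) is precisely what is needed to force this chain to collapse, by Theorem \ref{theoremthree}. Once all four invariants coincide, the middle equality is immediate. No separate estimate for $\Findim \Lambda^{op}$ needs to be produced here: the inequality $\findim \Lambda^{op} \leq \Findim \Lambda^{op}$ is tautological, while the matching upper bound $\Findim \Lambda^{op} \leq \depth \Lambda$ is carried by the outer equality in the collapsed chain.

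The substantive work therefore lies entirely upstream, in the proof of Theorem \ref{theoremthree} itself, and the corollary is a formal extraction. Within that upstream argument, what powers this particular corollary is the Artinian-case upper bound $\Findim \Lambda^{op} \leq \dell \Lambda$ coupled with the implication ``condition (\ref{Extvanishing}) for simples $\Rightarrow \dell \Lambda \leq \depth \Lambda$''. I anticipate no obstacle at the level of the corollary beyond citing the theorem; any difficulty is absorbed into proving Theorem \ref{theoremthree}.
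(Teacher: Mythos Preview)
Your proposal is correct and matches the paper's approach exactly: the paper states this corollary immediately after Theorem \ref{theoremthree} with no separate argument, simply noting it follows from the equality $\findim \Lambda^{op} = \Findim \Lambda^{op}$ established there in the Artinian case.
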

The double Ext modules ${\rm Ext}^j({\rm Ext}^{j_S}(S, \Lambda), \Lambda)$ for $j < j_S$ can then be interpreted as obstructions to the validity of Auslander--Buchsbaum type equalities and to the validity of the first finitistic dimension conjecture. 

\subsection*{The delooping level of a ring} We now rapidly describe the new invariant $\dell \Lambda$, and refer the reader to Section \ref{sectiondepthdelooping} for a complete discussion. We let $\Lambda$ denote a Noetherian semiperfect ring below. 

A common approach to studying the finitistic dimension is through the structure of syzygy modules, see for instance \cite{HZ95} for a detailed discussion. When one can realise a module $M \simeq \Omega N$ as a syzygy module, following the standard analogy with the based loop space functor of algebraic topology we then say that $M$ can be delooped. Our approach to the finitistic dimension can be summarised as looking for ``higher level'' deloopings of simple modules. 

The simplest question we can consider is whether a simple module $S$ can be delooped; this occurs precisely when $S \hookrightarrow \Lambda_\Lambda$ occurs in the socle of $\Lambda$. By a classical theorem of Bass, one in fact knows that $\findim \Lambda^{op} = 0$ if and only if every simple $S$ occurs in the socle of $\Lambda$ (Prop. \ref{basstheorem}), and as a result there are almost always simples that cannot be delooped. One is then lead to look for an analogous condition in the case of higher finitistic dimension. A natural generalisation is to consider whether the $n$-th syzygy $\Omega^n S \simeq \Omega^{n+1} N$ can be delooped $(n+1)$-times for some $n \geq 0$. If this occurs for fixed $n \in \N$ and all simple $S$, then one sees that $\findim \Lambda^{op} \leq n$ (Proposition \ref{inequalities}), reducing to one implication of the theorem of Bass when $n = 0$. Analysing this way of bounding the finitistic dimension, natural examples and methods eventually lead one to include deloopings up to retract in the stable module category $\stmodsf \Lambda$. This then motivates the following (see Def. \ref{deloopinglevel} for precision). 

\begin{defn*}[Def. \ref{deloopinglevel}] The delooping level of $S$ is defined as 
\begin{align*} 
	\dell S := \inf \{ n \geq 0 \ | \ \Omega^{n} S \textup{ is a stable retract of } \Omega^{n+1} N \textup{ for some } N \in \stmodsf \Lambda\}.
\end{align*} 
($\dell S = \infty$ if no such $n$ exists). The delooping level of the ring $\Lambda$ is then defined as $\dell \Lambda := \sup_S \dell S$, where $S$ runs over the finitely many simples in $\modsf \Lambda$.
\end{defn*} 

The invariant $\dell S$ seems to interpolate between other invariants which have been used to bound the finitistic dimension, such as the repetition index \cite{GHZ98} or the G-dimension \cite{AB69}. However $\dell S$ can be finite even when the latter are infinite. 

To make any use of this invariant, one wants methods to bound the delooping level from above; for example the main theorem (Thm. \ref{theoremthree}) will follow as soon as one manages to bound $\dell \Lambda \leq \depth \Lambda$. The main tool we employ is the series of adjoint pairs $(\Sigma^n, \Omega^n)$ on the stable module category for $n \geq 0$, where the left adjoint $\Sigma$ to the syzygy functor $\Omega$ was introduced by Auslander--Reiten. Such adjoint pairs give rise, for each module $M$ in the stable category of $\Lambda$, to two approximation towers 
\begin{align*}
	M \to \Omega \Sigma M \to \Omega^2 \Sigma^2 M \to \cdots \to \Omega^n \Sigma^n M \to \Omega^{n+1} \Sigma^{n+1} M \to \cdots	 
\end{align*}
and
\begin{align*}
	\cdots \to \Sigma^{n+1} \Omega^{n+1} M \to \Sigma^n \Omega^n M \to \cdots \to \Sigma^2 \Omega^2 M \to \Sigma \Omega M \to M. 
\end{align*}

These were first studied in \cite{AB69}, and are useful in studying the delooping level. As first method to control the delooping level, we offer a universal characterisation in terms of the above adjoint pairs.  

\begin{thm*}[Thm. \ref{theoremone}, Cor. \ref{corollaryone}] Let $\Lambda$ be a Noetherian semiperfect ring. Then $\dell \Lambda \leq n$ if and only if the unit map $\eta_{n+1}: \Omega^n S \to \Omega^{n+1} \Sigma^{n+1} \Omega^n S$ is a split-monomorphism for each simple module $S$. 
\end{thm*}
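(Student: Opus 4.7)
The plan is to reduce both directions of the equivalence to the universal property of the unit of the adjunction $(\Sigma^{n+1} \dashv \Omega^{n+1})$ on $\stmodsf \Lambda$, which the paper takes as an input from Auslander--Reiten theory. The point is that $\dell S \leq n$ asks only for the existence of \emph{some} $N \in \stmodsf \Lambda$ together with a stable split monomorphism $\Omega^n S \hookrightarrow \Omega^{n+1} N$, while the theorem asserts that whenever this is possible, the canonical candidate $N = \Sigma^{n+1} \Omega^n S$ equipped with the unit $\eta_{n+1}$ already realises it. Once this reduction is recognised, the argument is purely formal.

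For the ``if'' direction, I would simply observe that if $\eta_{n+1} : \Omega^n S \to \Omega^{n+1} \Sigma^{n+1} \Omega^n S$ is a stable split monomorphism for each simple $S$, then taking $N := \Sigma^{n+1} \Omega^n S$ exhibits $\Omega^n S$ as a stable retract of $\Omega^{n+1} N$, so $\dell S \leq n$ directly from the definition. Taking the supremum over the finitely many simples then yields $\dell \Lambda \leq n$.

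For the ``only if'' direction, fix a simple $S$ and, using $\dell \Lambda \leq n$, pick some $N \in \stmodsf \Lambda$ together with stable morphisms $f : \Omega^n S \to \Omega^{n+1} N$ and $r : \Omega^{n+1} N \to \Omega^n S$ satisfying $r \circ f = \textup{id}_{\Omega^n S}$ in $\stmodsf \Lambda$. By the universal property of the unit of the adjunction $(\Sigma^{n+1} \dashv \Omega^{n+1})$, the map $f$ factors as $f = \Omega^{n+1}(g) \circ \eta_{n+1}$ for the adjunct $g : \Sigma^{n+1} \Omega^n S \to N$ of $f$, and then
\[
\bigl(r \circ \Omega^{n+1}(g)\bigr) \circ \eta_{n+1} \;=\; r \circ f \;=\; \textup{id}_{\Omega^n S}
\]
exhibits $r \circ \Omega^{n+1}(g)$ as a stable retraction of $\eta_{n+1}$, so that $\eta_{n+1}$ is a stable split monomorphism, as desired.

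The real content is upstream of this formal manipulation: namely the construction and basic properties of the left adjoint $\Sigma$ to $\Omega$ on $\stmodsf \Lambda$, and in particular the fact that $\Sigma^{n+1}$ preserves the category of finitely generated modules (up to projective summands) in the Noetherian semiperfect setting, so that $N := \Sigma^{n+1} \Omega^n S$ is an admissible choice in the definition of $\dell S$. The main ``obstacle'' I anticipate is therefore conceptual rather than computational: one must recognise that the a priori existential condition defining $\dell S$ automatically upgrades to the canonical unit via the adjoint factorisation.
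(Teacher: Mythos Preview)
Your proof is correct and follows essentially the same route as the paper: the ``if'' direction is immediate by taking $N = \Sigma^{n+1}\Omega^n S$, and the ``only if'' direction uses the universal factorisation of any morphism $f: \Omega^n S \to \Omega^{n+1}N$ through the unit $\eta_{n+1}$ to deduce that a retraction of $f$ composed with $\Omega^{n+1}(g)$ retracts $\eta_{n+1}$. The paper's argument is identical in content, presented as the equivalence $i) \Leftrightarrow iii)$ for arbitrary $M$ and then specialised to simples.
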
 

This reduces the study of $\dell \Lambda$ to the structure of the modules $\Omega^{n+1} \Sigma^{n+1} \Omega^n S$. As second method, we offer a torsionfreeness criterion inspired from homological algebra over Gorenstein rings. 

\begin{thm*}[Torsionfreeness criterion, Cor. \ref{torsionfreenesscriterion}] Let $\Lambda$ be a Noetherian semiperfect ring. Assume that for each simple $S$, there is an $n_S$ such that $\Sigma^{n_S} \Omega^{n_S} S$ is torsionfree. Then $\dell \Lambda \leq \sup_S n_S < \infty$. 
\end{thm*}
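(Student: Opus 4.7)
The plan is to construct, for each simple $S$, an explicit $(n_S+1)$-st syzygy of which $\Omega^{n_S} S$ is a stable retract; the bound $\dell S \leq n_S$ will then follow immediately from the definition. The two ingredients are the Auslander--Bridger characterisation of torsionfree modules as first syzygies, and the triangle identity for the adjunction $\Sigma^{n_S} \dashv \Omega^{n_S}$ on $\stmodsf \Lambda$. Notably, this bypasses the universal characterisation of Corollary \ref{corollaryone}, building the retract directly.

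The first step is to recall the classical Auslander--Bridger fact that a finitely generated module $T$ over a Noetherian ring is torsionfree if and only if $T$ embeds in a finitely generated free module, equivalently, $T \simeq \Omega N$ in $\stmodsf \Lambda$ for some $N \in \modsf \Lambda$. Applying this to the hypothesis $T = \Sigma^{n_S} \Omega^{n_S} S$ produces $N_S \in \modsf \Lambda$ with a stable isomorphism $\Sigma^{n_S} \Omega^{n_S} S \simeq \Omega N_S$; applying the syzygy functor $\Omega^{n_S}$ then yields a stable isomorphism $\Omega^{n_S} \Sigma^{n_S} \Omega^{n_S} S \simeq \Omega^{n_S+1} N_S$ in $\stmodsf \Lambda$.

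The second step is formal. The triangle identity for the adjunction $\Sigma^{n_S} \dashv \Omega^{n_S}$ states that for every $M \in \stmodsf \Lambda$, the composition $\Omega^{n_S} M \xrightarrow{\eta} \Omega^{n_S} \Sigma^{n_S} \Omega^{n_S} M \xrightarrow{\Omega^{n_S}\epsilon} \Omega^{n_S} M$ equals the identity, so $\eta$ is a split monomorphism in $\stmodsf \Lambda$. Specialising to $M = S$ and composing with the isomorphism from the first step exhibits $\Omega^{n_S} S$ as a stable retract of $\Omega^{n_S+1} N_S$, so by definition $\dell S \leq n_S$. Taking the supremum over the simple modules of $\Lambda$, which are finite in number by semiperfectness, gives $\dell \Lambda \leq \sup_S n_S < \infty$.

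The only real content in the argument is the translation of torsionfreeness into the stable-categorical statement of ``being a first syzygy''; once this is in hand, the retract is produced formally by the triangle identity, so no serious obstacle arises. The same proof template should, more generally, give bounds on $\dell \Lambda$ from any criterion guaranteeing that $\Sigma^n \Omega^n S$ lies in the essential image of $\Omega$ in $\stmodsf \Lambda$.
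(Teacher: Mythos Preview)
Your proof is correct and follows essentially the same approach as the paper. Both arguments use the triangle identity for $(\Sigma^{n_S}, \Omega^{n_S})$ to exhibit $\Omega^{n_S} S$ as a stable retract of $\Omega^{n_S}\Sigma^{n_S}\Omega^{n_S} S$, and then invoke the fact that a torsionfree module is a first syzygy to deloop once more. The only difference is cosmetic: where you pick an arbitrary $N_S$ with $\Sigma^{n_S}\Omega^{n_S} S \simeq \Omega N_S$, the paper uses the canonical choice $N_S = \Sigma(\Sigma^{n_S}\Omega^{n_S} S) = \Sigma^{n_S+1}\Omega^{n_S} S$ via Proposition~\ref{towerproperties}~iii), and then observes that the resulting composite section is precisely the unit $\eta_{n_S+1}$, recovering the conclusion of Theorem~\ref{theoremone} as a bonus.
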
 
The main theorem will follow as an application of our torsionfreeness criterion. As third method, and complementing the torsionfreeness criterion, we construct higher analogs of the Auslander--Bridger exact sequence adapted to studying the torsionfree and reflexive properties for modules of the form $\Sigma^n \Omega^n M$. These exact sequences, although formulated slightly differently, also appeared in prior work of Hoshino--Nishida \cite{HN02}. See Section \ref{subsectionhigherAuslanderBridgersequences} for details. 

Together, these methods allow us to compute or upper bound the delooping level of standard classes of algebras. We obtain results for: 
\begin{itemize}
	\item Gorenstein rings, 
	\item local Artinian rings, 
	\item rings of depth zero, 
	\item radical square zero algebras, 
	\item monomial Artin algebras, 
	\item $n$-syzygy finite algebras, 
	\item commutative local Noetherian rings, 
	\item noncommutative local Noetherian rings, 
	\item Auslander-Gorenstein rings, 
	\item rings whose simple modules satisfy the Ext-vanishing conditions (\ref{Extvanishing}). 
\end{itemize} 

\subsection*{Depth, delooping level and Cohen-Macaulayness} Finally, we discuss our new characterisation of the depth of commutative local Noetherian rings. With the language in place, we can state the full theorem in terms of the adjunction $(\Sigma^n, \Omega^n)$: 

\begin{thm*}[Thm. \ref{commutativelocalnoeth}] Let $R = (R, \m, k)$ be a commutative local Noetherian ring. Then we have equalities $\depth R = \findim R = \dell R$. More precisely, when $R$ has depth $d$, the unit map of $(\Sigma^{d+1}, \Omega^{d+1})$ gives rise to a stable equivalence 
	\begin{align*} 
		\Omega^d k \xrightarrow{\simeq} \Omega^{d+1} \Sigma^{d+1} \Omega^d k 
	\end{align*}
	and $d = \depth R$ is the lowest index for which this holds. 
\end{thm*}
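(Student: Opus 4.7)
The plan is to reduce to the main theorem (Thm.~\ref{theoremthree}) by verifying its Ext-vanishing hypothesis for the unique simple $R$-module $k = R/\m$, and then upgrade the resulting split monomorphism on the unit map to a stable equivalence using the higher Auslander--Bridger exact sequences particular to this setting.

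First I would verify condition~(\ref{Extvanishing}) for $k$. Set $d = \depth R$. By definition of depth, $j_k = d$. The module $E := {\rm Ext}^d(k, R)$ is finitely generated and annihilated by $\m$ (since $k$ is), so it is a finite-dimensional $k$-vector space, say $E \cong k^{\oplus r}$ for some $r \geq 0$. Then for $j < d$,
\[
{\rm Ext}^j(E, R) \cong {\rm Ext}^j(k, R)^{\oplus r} = 0,
\]
again by definition of depth. So condition~(\ref{Extvanishing}) holds, and Thm.~\ref{theoremthree} immediately yields $\depth R = \findim R = \dell R = d$.

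For the more precise statement about the unit map: the inequality $\dell R \leq d$ combined with Thm.~\ref{theoremone} gives that the unit $\eta_{d+1}\colon \Omega^d k \to \Omega^{d+1}\Sigma^{d+1}\Omega^d k$ is a split monomorphism in $\stmodsf R$. To promote this to a stable equivalence, I would analyse the stable complement using the higher Auslander--Bridger exact sequences constructed in Section~\ref{subsectionhigherAuslanderBridgersequences}: these identify the stable cokernel of the unit in terms of double Ext modules of the form appearing in~(\ref{Extvanishing}), which were just shown to vanish. The minimality of $d$ is then immediate: if $\Omega^{d'} k \to \Omega^{d'+1}\Sigma^{d'+1}\Omega^{d'} k$ were a stable equivalence for some $d' < d$, it would in particular be a split monomorphism, and Thm.~\ref{theoremone} would force $\dell R \leq d' < d$, contradicting the equalities above.

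The main obstacle lies in the preceding paragraph: the abstract characterisation from Thm.~\ref{theoremone} provides only a split monomorphism, and refining this to an isomorphism in $\stmodsf R$ requires the additional input that the stable complement is trivial. This is where the higher Auslander--Bridger machinery enters — it is designed precisely to measure this complement in terms of double Ext modules, so the vanishing reduces back to the same Ext calculations that established~(\ref{Extvanishing}) for $k$, leveraging the particularly rigid structure of $k$-vector spaces as $R$-modules.
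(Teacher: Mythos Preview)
Your verification of condition~(\ref{Extvanishing}) for $k$ and the deduction of the equalities $\depth R = \findim R = \dell R$ from Theorem~\ref{theoremthree} are correct and match the paper exactly (the paper phrases the Ext computation as Example~\ref{commutativefirstnontrivialcondition}). Your minimality argument is also fine and makes explicit what the paper leaves implicit.

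The gap is in the second step, promoting the split monomorphism $\eta_{d+1}$ to a stable equivalence. The higher Auslander--Bridger sequences of Section~\ref{subsectionhigherAuslanderBridgersequences} do \emph{not} ``identify the stable cokernel of the unit $\eta_{d+1}$'' as you claim. What Proposition~\ref{propositionreflexivesequence} actually controls is the torsion part of $\Sigma^d\Omega^d k$, via the map $\sigma_d\colon k \to {\rm Ext}^d({\rm Ext}^d(k,R),R)$. Even granting torsionfreeness of $\Sigma^d\Omega^d k$, this only gives you the stable equivalence $\Omega^d\Sigma^d\Omega^d k \xrightarrow{\simeq} \Omega^{d+1}\Sigma^{d+1}\Omega^d k$; you would still need the separate fact that $\eta_d\colon \Omega^d k \to \Omega^d\Sigma^d\Omega^d k$ is a stable \emph{equivalence} rather than merely a section. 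That is the content of Lemma~\ref{aestheticlemma}, which relies on the grade conditions and an Eckmann--Hilton argument, and which you do not invoke.

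The paper's route is to cite Proposition~\ref{keyproposition} directly: torsionfreeness of $\Sigma^d\Omega^d k$ is obtained from the Spherical Filtration Theorem (Corollary~\ref{sphericalfiltrationcorollary}), and Lemma~\ref{aestheticlemma} then upgrades $\eta_d$ to an equivalence, so that the composite $\eta_{d+1}$ is an equivalence. Note also that extracting torsionfreeness from Proposition~\ref{propositionreflexivesequence} alone would require knowing $\sigma_d \neq 0$ independently; the paper explicitly observes at the end of Section~\ref{subsectionhigherAuslanderBridgersequences} that this non-vanishing is only established \emph{via} Proposition~\ref{keyproposition}, and a direct proof is left as a conjecture. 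So your proposed shortcut through the higher sequences is circular as the paper stands.
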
 
An analogous result holds for noncommutative local Noetherian rings (Thm. \ref{noncommutativelocalnoeth}). The above theorem shows that $\dell \Lambda$ can be considered as an extension of the depth of commutative local Noetherian rings to the noncommutative domain. In the last section we argue that the inequality 
\begin{align*}
\Findim \Lambda^{op} \leq \dell \Lambda
\end{align*}
is a form of ``Cohen-Macaulay'' property for Noetherian semiperfect rings. We show that this inequality characterises Cohen-Macaulay rings amongst commutative local Noetherian rings, and that it holds also for all noncommutative Gorenstein rings and noncommutative Artinian rings (Thm. \ref{CMinequality}). In fact, our results suggest the stronger possibility that equality holds for Artinian rings, and we record this as an open question.  

\begin{quest*}[Qu. \ref{CMconjecture}] Let $\Lambda$ be an Artinian ring. Do we have $\Findim \Lambda^{op} = \dell \Lambda$? 
\end{quest*}

A positive answer would suggest that the delooping level serves as the natural replacement for the depth in the noncommutative setting.

\subsection*{Structure of the paper}
\begin{itemize}
	\item In Section 1, we introduce the depth and delooping level of $\Lambda$. We then study the left adjoint to syzygies in 1.1, and derive various criteria for controlling the delooping level. In 1.2 we consider the delooping level for standard classes of examples. 
	\item In Section 2, we introduce the Auslander--Bridger grade conditions, and use them in 2.1 to prove the main theorem. In 2.2 we construct the higher Auslander--Bridger exact sequences. 
	\item In Section 3, we state and prove our new characterisation of the depth of commutative local Noetherian rings, and derive an analogous result for noncommutative local Noetherian rings. 
	\item In Section 4, we end the paper with discussing the Cohen-Macaulay inequality $\Findim \Lambda^{op} \leq \dell \Lambda$ and suggest that equality holds for Artinian rings. 
	\end{itemize} 

\subsection*{Acknowledgements} The author is thankful to Ben Briggs for interesting conversations on the subject of this paper.  

\subsection*{Conventions and notation} Noetherian and Artinian rings in this paper will always stand for two-sided Noetherian and two-sided Artinian rings. Unless specified otherwise, $\Lambda$ will always denote a Noetherian semiperfect ring. An Artin algebra will mean a ring $\Lambda$ which is a finite $R$-algebra for $R = (R, \m, k)$ a commutative local Artinian ring. The category $\Modsf \Lambda$ stands for the category of all right $\Lambda$-modules, and the category of left modules $\Modsf \Lambda^{op}$ is identified with right modules over the opposite ring. We let $\modsf \Lambda$ stand for the full subcategory of finitely presented right modules, and without further modifier a module will always mean an object of $\modsf \Lambda$. 

While the paper is written in the generality of Noetherian semiperfect rings $\Lambda$, the Noetherian assumption alone is enough for many results and the semiperfect hypothesis is used to simplify the text. All we require is that simple modules form a test class for measuring projective dimension via Tor. The interested reader will find no difficulty in verifying the results in this generality. 


\subsection*{Stable module categories}
The stable module category $\stmodsf \Lambda$ will always mean the category whose objects coincide with $\modsf \Lambda$, and with morphisms given by residue classes of homomorphisms modulo those factoring through projective objects. The syzygy functor $\Omega: \stmodsf \Lambda \to \stmodsf \Lambda$ is defined on the stable module category, and can be computed as the kernel $\Omega(M) = {\rm ker}(P \twoheadrightarrow M)$ of any epimorphism from a projective $P$ in $\modsf \Lambda$; in particular we do not insist that syzygies be computed from projective covers.

Objects in $\stmodsf \Lambda$ can always be considered as objects of $\modsf \Lambda$, well-defined up to direct sum with projectives. To handle possible ambiguity, we shall use implicitly the following well-known lemma on stable module categories: 

\begin{lem} Let $M, N \in \modsf \Lambda$. 
	\begin{enumerate}[1)] 
	\item The following are equivalent: 
		\begin{enumerate}[i)]
		\item $M$ is a retract of $N$ in $\stmodsf \Lambda$. 
		\item $M$ is a retract of $N \oplus P$ in $\modsf \Lambda$ for some projective object $P$. \smallskip
	\end{enumerate}
	\item The following are equivalent: 
		\begin{enumerate}[i)] 
		\item $M$ is isomorphic to $N$ in $\stmodsf \Lambda$.
		\item $M \oplus Q$ is isomorphic to $N \oplus P$ in $\modsf \Lambda$ for some projective objects $P, Q$. 
	\end{enumerate}
\end{enumerate}
\end{lem}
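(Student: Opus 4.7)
The plan is to handle the reverse implications in both parts immediately—since any projective summand vanishes in $\stmodsf \Lambda$, (ii) trivially implies (i)—so the real content is in the forward direction.

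For part 1), the strategy is to absorb the defect of the stable retraction into a projective summand. Given $f : M \to N$ and $g : N \to M$ with $gf - 1_M$ factoring through a projective, I write this defect as $gf - 1_M = \beta\alpha$ with $\alpha : M \to P$ and $\beta : P \to M$. I would then check that the composite
\[ M \xrightarrow{\binom{f}{\alpha}} N \oplus P \xrightarrow{(g,\, -\beta)} M \]
equals $gf - \beta\alpha = 1_M$, exhibiting $M$ as a genuine retract of $N \oplus P$ in $\modsf \Lambda$, which is (ii).

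For part 2), the plan is to apply part 1) in both directions and then cancel. A stable isomorphism makes $M$ a stable retract of $N$ and $N$ a stable retract of $M$; part 1) then produces isomorphisms $N \oplus P \cong M \oplus X$ and $M \oplus Q \cong N \oplus Y$ in $\modsf \Lambda$ for projectives $P, Q$ and modules $X, Y$. The main obstacle is showing that $X$ (equivalently $Y$) is projective. I would combine the two isomorphisms into
\[ M \oplus X \oplus Y \;\cong\; N \oplus P \oplus Y \;\cong\; M \oplus Q \oplus P, \]
and invoke the Krull--Schmidt theorem, which holds for finitely presented modules over the two-sided Noetherian semiperfect ring $\Lambda$, to cancel the $M$ summand and conclude $X \oplus Y \cong P \oplus Q$. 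Hence $X$ and $Y$ are direct summands of a projective module, so are themselves projective, and the decomposition $N \oplus P \cong M \oplus X$ with $Q := X$ delivers (ii). If one wishes to avoid appealing to Krull--Schmidt, an alternative is to stay in the setting of part 1) and analyze the complementary idempotent $e := 1_{N \oplus P} - \binom{f}{\alpha}(g,-\beta)$: working entrywise, each component of $e$ visibly factors through a projective (using the other half $fg - 1_N = \delta\gamma$ of the stable isomorphism for the $N$-to-$N$ entry, and the presence of $P$ for the rest), so $e$ is stably zero, and restricting to its image $X$ shows that $1_X$ factors through a projective, forcing $X$ to be projective.
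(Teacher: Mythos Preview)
Your argument for part 1) is correct and essentially identical to the paper's proof (up to an irrelevant sign in how you write the defect).

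For part 2), your \emph{primary} route has a genuine gap: the claim that Krull--Schmidt (or even the weaker cancellation property you actually use) holds for finitely presented modules over an arbitrary two-sided Noetherian semiperfect ring is false. Commutative local Noetherian rings are semiperfect, yet Krull--Schmidt and cancellation for finitely generated modules can fail when the ring is not Henselian (there are classical one-dimensional examples). So you cannot in general cancel $M$ from $M \oplus X \oplus Y \cong M \oplus P \oplus Q$ in this setting.

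Your \emph{alternative} route, however, is correct and is essentially the paper's own argument. The paper takes the complement $Q$ in $N \oplus P \cong M \oplus Q$ and shows $1_Q = 0$ in $\stmodsf \Lambda$ by observing that the lifted maps $s' = s \oplus \alpha$ and $\pi' = \pi + \beta$ still represent inverse stable isomorphisms (they differ from $s,\pi$ only by maps through $P$), so $s'\pi' = 1_{N\oplus P}$ stably and the complementary idempotent $1_{N\oplus P} - s'\pi'$ is stably zero; restricting to $Q$ gives $1_Q = 0$ stably, hence $Q$ is projective. Your entrywise verification that $e = 1_{N\oplus P} - \binom{f}{\alpha}(g,-\beta)$ is stably zero is the same idea made explicit. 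The paper's phrasing is slightly slicker, but the content is the same. So: drop the Krull--Schmidt detour and keep your alternative as the main argument.
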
 
\begin{proof}
1) Assume i) and let $s: M \leftrightarrows N: \pi$ satisfy $\pi s = {\rm id}_M$ in $\stmodsf \Lambda$. We have a factorisation of ${\rm id}_M - \pi s = \beta \alpha$ in $\modsf \Lambda$ for morphisms $M \xrightarrow{\alpha} P \xrightarrow{\beta} M$ and $P$ a projective object. The morphisms 
	\begin{align*}
		M \xrightarrow{s' = s \oplus \alpha} N \oplus P \xrightarrow{\pi' = \pi + \beta} M 
	\end{align*} 
compose to $\pi' s' = \pi s + \beta \alpha = {\rm id}_M$ and ii) holds. The converse is clear. 

2) Assume i) and let $s: M \leftrightarrows N: \pi$ be inverse isomorphisms in $\stmodsf \Lambda$. By 1), there is a retract $s': M \leftrightarrows N \oplus P: \pi'$ in $\modsf \Lambda$ with notation as above, and thus $M \oplus Q \cong N \oplus P$ for some complement $Q$. We now show that $Q$ is projective.  

Let $\iota_Q: Q \leftrightarrows N \oplus P: \pi_Q$ be the inclusion and projection, so that $\pi_Q \iota_Q = {\rm id}_Q$ and $\iota_Q \pi_Q = {\rm id}_{N \oplus P} - \pi' s'$. Since $s, \pi$ descend to inverse isomorphisms in the stable module category, so do $s', \pi'$ and thus ${\rm id}_{N \oplus P} = \pi's'$ in $\stmodsf \Lambda$. It follows that ${\rm id}_Q = {\rm id}_Q^2 = \pi_Q (\iota_Q \pi_Q) \iota_Q = \pi_Q({\rm id}_{N \oplus P} - \pi' s')\iota_Q = 0$ in $\stmodsf \Lambda$ and $Q$ is projective, and so ii) holds. The converse is again clear. 
\end{proof}
In the first case we say that $M$ is a stable retract of $N$, and in the second case that $M$ is stably equivalent to $N$. We always denote stable equivalence by $M \simeq N$ and isomorphism in $\modsf \Lambda$ by $M \cong N$.

\section{The depth and delooping level of a ring}\label{sectiondepthdelooping} Let $\Lambda$ be a Noetherian semiperfect ring. The little and big finitistic dimensions of $\Lambda$ are respectively
	\begin{align*}
	\findim \Lambda &:= \sup \{ \pdim M \ | \ M \in \modsf \Lambda,\ \pdim M <\infty \} \\
	\Findim \Lambda &:= \sup \{ \pdim M \ | \ M \in \Modsf \Lambda,\ \pdim M <\infty \}. 
	\end{align*}
	We call $\findim \Lambda$ the finitistic dimension of $\Lambda$ for short. More precisely, this defines the right finitistic dimension and we use $\findim \Lambda^{op}$ (resp. $\Findim \Lambda^{op}$) to denote the left finitistic dimension. When $\Lambda = R = (R, \m, k)$ is a commutative local Noetherian ring, we have $\findim R = \depth R$ by the Auslander--Buchsbaum formula and so the finitistic dimension is well-understood in terms of a more basic invariant. This will serve as a natural starting point for our investigation. 

	In the general situation, one still has a natural notion of depth available. Recall that the grade of $M \in \modsf \Lambda$ is defined as 
\begin{align*}
	\grade M := \inf {\rm Ext}^*_\Lambda(M, \Lambda) =  \inf \{ i \geq 0 \ | \ {\rm Ext}^i_\Lambda(M, \Lambda) \neq 0 \}. 	
\end{align*}
In the local commutative case we have $\depth R = \grade k = \inf {\rm Ext}^*_R(k, R)$. In general, we opt to take the supremum over all simples $S$ to define the depth.  

\begin{defn} The depth of $\Lambda$ is defined as $\depth \Lambda := \sup_S \grade S$. 
\end{defn}

We mention that finiteness of $\depth \Lambda$ for Artin algebras is the statement of the generalised Nakayama conjecture. 

Taking inspiration from the Auslander-Buchsbaum formula, one may try to upper bound the finitistic dimension of a Noetherian semiperfect ring $\Lambda$ by its depth. One class of proofs of the formula proceeds by induction on $\depth R$, reducing to the base case of $\depth R = 0$ where $k$ embeds in the socle of $R$; observe that this turns $k \simeq \Omega^1 N$ into a syzygy module for $N = R/k$. This is then used to bound $\findim R \leq 0 = \depth R$.\footnote{The author has first learned such an argument from lecture notes of S. Spiroff for a VIGRE summer school in 2004 at the University of Utah.} 

Extrapolating the argument in this last bound to higher finitistic dimensions, we are lead to introducing a new invariant. Denote by \emph{stable retracts} the retracts in the stable category $\stmodsf \Lambda$. We then define the delooping level of $M \in \modsf \Lambda$ to be 
	\begin{align*} 
		\dell M := \inf \{ i \geq 0 \ | \ \Omega^i M \textup{ is a stable retract of } \Omega^{i+1} N \textup{ for some } N \in \stmodsf \Lambda \}. 
	\end{align*}
	As with the depth, we define an invariant by taking supremum over the simples $S$. 

	\begin{defn}\label{deloopinglevel} The delooping level of $\Lambda$ is defined as $\dell \Lambda := \sup_S \dell S$. 
	\end{defn}
	Unpacking the definition, one sees that $\dell \Lambda \leq n$ if and only if for each simple $S$, the $n$-th syzygy $\Omega^n S$ is a stable retract of $\Omega^{n+1} N_S$ for some $N_S \in \stmodsf \Lambda$. Our first observation is that $\depth \Lambda$ and $\dell \Lambda$ bound the finitistic dimension. 
	\begin{prop}\label{inequalities} Let $\Lambda$ be a Noetherian semiperfect ring. We have inequalities
	\begin{align*}
		\depth \Lambda \leq \findim \Lambda^{op} \leq \dell \Lambda.
	\end{align*}
	If $\Lambda$ is Artinian, then we have the additional bound 
	 \begin{align*}
		 \depth \Lambda \leq \findim \Lambda^{op} \leq \Findim \Lambda^{op} \leq \dell \Lambda. 
	 \end{align*} 
\end{prop}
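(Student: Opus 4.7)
The middle inequality $\findim \Lambda^{op} \leq \Findim \Lambda^{op}$ is tautological, and I would dispatch the leftmost inequality $\depth \Lambda \leq \findim \Lambda^{op}$ by citing the classical theorem of Jans \cite{Ja61}: if $\grade S = g < \infty$ for a simple right module $S$, then dualising a truncation of the minimal projective resolution of $S$ against $\Lambda$ produces a finitely generated left module of finite projective dimension equal to $g$, forcing $\findim \Lambda^{op} \geq g$.

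The main new content is $\findim \Lambda^{op} \leq \dell \Lambda$, which I plan to prove by Tor dimension-shifting. Setting $n = \dell \Lambda$ (assumed finite, otherwise there is nothing to show), consider a finitely presented left $\Lambda$-module $M$ with $\pdim M = d < \infty$; the goal is $d \leq n$. Because $\Lambda$ is Noetherian semiperfect, the simples form a test class for projective dimension via Tor, so if $d > n$ I may pick a simple right module $S$ with ${\rm Tor}^\Lambda_d(S, M) \neq 0$. Iterated use of the short exact syzygy sequences identifies ${\rm Tor}_d(S, M) \cong {\rm Tor}_{d-n}(\Omega^n S, M)$. By definition of $\dell \Lambda$ (applied to $S$ and then pushed up by functoriality of $\Omega^{n-\dell S}$), $\Omega^n S$ is a stable retract of $\Omega^{n+1} N_S$ for some $N_S$, hence by the preceding lemma an honest direct summand of $\Omega^{n+1} N_S \oplus P$ in $\modsf \Lambda$ for some projective $P$. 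Since Tor is additive on summands and vanishes on projectives, the nonzero group ${\rm Tor}_{d-n}(\Omega^n S, M)$ embeds as a summand of ${\rm Tor}_{d-n}(\Omega^{n+1} N_S, M) \cong {\rm Tor}_{d+1}(N_S, M)$, which must vanish since $\pdim M = d$. This contradiction yields $d \leq n$.

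For the Artinian refinement $\Findim \Lambda^{op} \leq \dell \Lambda$, I would run exactly the same argument with $M$ an arbitrary left module of finite projective dimension, provided the simples still detect such a dimension. This rests on two standard facts available for Artinian (hence left perfect) rings: first, $\pdim_{\Lambda^{op}} M = \fdim_{\Lambda^{op}} M$ for every left $\Lambda$-module $M$, so it suffices to test via Tor; second, every right $\Lambda$-module is a filtered colimit of finite-length modules with simple subquotients, so vanishing of ${\rm Tor}_i(-, M)$ on all simple right modules propagates via long exact sequences, and Tor commuting with filtered colimits, to vanishing on all right modules. I do not anticipate a substantive obstacle in the overall argument; the only point that requires genuine care is the bookkeeping of syzygy shifts for Tor together with the translation between stable retracts and honest direct summands up to a projective summand, which is already encoded in the lemma recalled at the end of the conventions section.
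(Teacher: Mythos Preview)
Your treatment of $\depth \Lambda \leq \findim \Lambda^{op}$ and $\findim \Lambda^{op} \leq \dell \Lambda$ matches the paper's argument essentially line for line: Jans' dualisation trick for the first, and Tor dimension-shifting along stable retracts for the second.

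The Artinian refinement $\Findim \Lambda^{op} \leq \dell \Lambda$ is where you diverge. You rerun the Tor argument for an arbitrary left module $M$, justifying that simples still detect $\pdim M$ by combining perfectness ($\pdim = \fdim$) with the fact that every right module is a filtered colimit of finite-length modules, so Tor-vanishing on simples propagates. This is correct and self-contained. The paper instead passes through injective dimensions: it uses the Baer criterion to show that simples detect the injective dimension of any right module $X$, runs the dimension-shift with ${\rm Ext}^*(-,X)$ in place of ${\rm Tor}$ to bound the big right injective finitistic dimension by $\dell \Lambda$, and then invokes Matlis' theorem \cite{Ma59} (${\rm InjFindim}\,\Lambda = {\rm FlatFindim}\,\Lambda^{op}$) together with perfectness to identify this with $\Findim \Lambda^{op}$. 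Your route is more elementary and keeps the whole proof uniformly on the Tor side; the paper's route has the incidental benefit of exhibiting the equality $\Findim \Lambda^{op} = {\rm InjFindim}\,\Lambda$ along the way, at the cost of an external citation.
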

\begin{proof} The lower bound $\depth \Lambda \leq \findim \Lambda^{op}$ is due to Jans \cite{Ja61} and obtained as follows. Let $P_* \xrightarrow{\simeq} S$ be a projective resolution of a simple $S$ by finitely generated projectives, and consider then consider truncations of the dual complex $P_*^* = {\rm Hom}_\Lambda(P_*, \Lambda)$: 
	\begin{align*}
		0 \to P_0^* \xrightarrow{\delta_1^*} P_1^* \xrightarrow{\delta_2^*} \dots \xrightarrow{\delta_{n-1}^*} P_{n-1}^* \xrightarrow{\delta_n^*} P_n^* \to {\rm coker}(\delta_n^*) \to 0. 
	\end{align*}
	If ${\rm Ext}^*_\Lambda(S, \Lambda) = 0$, the above is a projective resolution of ${\rm coker}(\delta_n^*)$ for each $n \geq 1$ and one sees that ${\rm Ext}^{n}_{\Lambda^{op}}({\rm coker}(\delta_n^*), \Lambda) = S \neq 0$. Therefore $\findim \Lambda^{op} = \infty$. If ${\rm Ext}^*_\Lambda(S, \Lambda) \neq 0$, this argument gives ${\rm Ext}^{i}_\Lambda(S, \Lambda) \neq 0$ for some $i \leq \findim \Lambda^{op}$. In each case it follows that $\grade S \leq \findim \Lambda^{op}$, and so $\depth \Lambda \leq \findim \Lambda^{op}$ holds. 

	To prove the bound $\findim \Lambda^{op} \leq \dell \Lambda$, we may first assume that $\dell \Lambda = d < \infty$. In this case, for each simple $S$ there is an $N_S \in \modsf \Lambda$ such that $\Omega^d S$ is a stable retract of $\Omega^{d+1} N_S$. Let $M \in \modsf \Lambda^{op}$ be a module of finite projective dimension $n$. Since $\Lambda$ is semiperfect, there exists a simple $S \in \modsf \Lambda$ such that ${\rm Tor}_n^{\Lambda}(S, M) \neq 0$. 
	
	Now, ${\rm Tor}^\Lambda_n(-, M)$ is a functor on $\stmodsf \Lambda$ when $n \geq 1$ and therefore sends stable retracts to retracts of abelian groups. If $n > d$, the non-zero abelian group 
	\begin{align*}
		{\rm Tor}^\Lambda_n(S, M) = {\rm Tor}^\Lambda_{n-d}(\Omega^d S, M)
	\end{align*}
	is then a retract of 
	\begin{align*} 
		{\rm Tor}^\Lambda_{n-d}(\Omega^{d+1}N_S, M) = {\rm Tor}^\Lambda_{n+1}(N_S, M) = 0. 
	\end{align*}
	This contradiction shows that $n \leq d$, and so we have shown $\findim \Lambda^{op} \leq \dell \Lambda$.  
	
	Finally, when $\Lambda$ is Artinian, the Baer criterion shows that for any $X \in \Modsf \Lambda$ of finite injective dimension $n$, there is a simple $S \in \modsf \Lambda$ with \mbox{${\rm Ext}^n_\Lambda(S, X) \neq 0$}. Repeating the above argument gives a bound for the big injective finitistic dimension 
	\begin{align*}
		{\rm InjFindim}\ \! \Lambda = \sup \{ \idim X \ | \ X \in \Modsf \Lambda, \idim X < \infty \} \leq \dell \Lambda. 	
	\end{align*}
	By a theorem of Matlis \cite[Thm. 1]{Ma59}, the big right injective finitistic dimension equals the big left flat finitistic dimension ${\rm InjFindim}\ \! \Lambda = {\rm FlatFindim}\ \! \Lambda^{op}$. Since $\Lambda$ is perfect, the latter equals ${\rm Findim}\ \! \Lambda^{op}$ and ${\rm Findim}\ \! \Lambda^{op} = {\rm InjFindim}\ \! \Lambda \leq \dell \Lambda$ follows. 
\end{proof} 

We point out that either inequality in $\depth \Lambda \leq \findim \Lambda^{op} \leq \dell \Lambda$ can be strict. Consider the differences 
\begin{align*}
	&N_1 = \findim \Lambda^{op} - \depth \Lambda	\\
	&N_2 = \dell \Lambda - \findim \Lambda^{op}.	
\end{align*}
Concerning $N_1$, Huisgen-Zimmermann has constructed examples of Artin algebras with radical square zero in which $N_1 \gg 0$ (see remark after Prop. 2 in \cite{Kr98}, and use that $\findim \Lambda^{op} = \Findim \Lambda^{op}$ for radical square zero algebras \cite[Sect. 3]{HZ95}). 

Concerning $N_2$, note that the bound $\findim \Lambda^{op} \leq \Findim \Lambda^{op} \leq \dell \Lambda$ for Artinian rings shows that an equality $\findim \Lambda^{op} = \dell \Lambda$ leads to the equality $\findim \Lambda^{op} = \Findim \Lambda^{op}$, and so to the validity of the first finitistic dimension conjecture for $\Lambda^{op}$. The first counterexamples were also constructed by Huisgen-Zimmermann \cite{HZ92}; this was followed-up by Smal{\o} \cite{Sm98} who constructed Artinian rings of radical cube zero over which $\Findim \Lambda^{op} - \findim \Lambda^{op} \gg 0$, and so $N_2 \gg 0$ over these rings as well. 

The last bound to consider is the bound $\Findim \Lambda^{op} \leq \dell \Lambda$ for Artinian rings. Equality in this case appears more mysterious, and will be considered in Section \ref{sectioncohenmacaulayinequality}. 

\subsection*{A toy example}\label{subsectiontoyexample} At first glance, it isn't clear that $\dell \Lambda$ has any chance of being finite in general. We first consider a simple but important example. 

Recall that $M \in \modsf \Lambda$ is Gorenstein-projective if there exists an unbounded, acyclic complex of finite projectives
\begin{align*}
	\cdots \to C_2 \to C_1 \to C_0 \to C_{-1} \to C_{-2} \to \cdots 
\end{align*}
whose zero-truncation resolves $M$, and such that ${\rm Hom}_\Lambda(C_*, \Lambda)$ is also acyclic. We say that $C_*$ is a complete resolution of $M$. In this case $M \simeq \Omega^n N$ for any $n \geq 0$ and appropriate $N$ obtained by truncating the above complex further to the right, and so $M$ is an arbitrarily high order syzygy module. 

Recall also that $\Lambda$ is Gorenstein if $\idim \Lambda_\Lambda < \infty$ and $\idim \Lambda^{op}_{\Lambda^{op}} < \infty$, in which case these values are equal by a theorem of Zaks \cite[Appendix]{Zaks69}. We refer to the common value $\idim \Lambda_\Lambda = \idim \Lambda^{op}_{\Lambda^{op}}$ as the dimension of $\Lambda$. Over a Gorenstein ring, Buchweitz \cite[4.4]{Bu86} has shown that the Gorenstein-projectives are precisely those modules $M$ satisfying ${\rm Ext}^i_\Lambda(M, \Lambda) = 0$ for $i > 0$, and it follows that $n$-th syzygy modules are Gorenstein-projective as soon as $n \geq \idim \Lambda_\Lambda$. 

\begin{exmp}\label{gorensteinexample} Let $\Lambda$ be Gorenstein of dimension $d$. For any simple module $S$, the $d$-th syzygy module $\Omega^d S$ is Gorenstein-projective and so an arbitrarily high order syzygy. It follows that $\dell S \leq d$, and so the delooping level of $\Lambda$ is at most $d$. 
\end{exmp} 

In fact the delooping level equals precisely the injective dimension for Gorenstein rings, as we will see (Example \ref{fullgorensteinexample}).

\subsection{Adjunction for syzygies and a torsionfreeness criterion}\label{subsectionadjunction} 
Let $\Lambda$ be a Noetherian semiperfect ring and $S$ a simple module. Suppose we suspect that $\dell S \leq n$, so that $\Omega^n S$ should be a stable retract of $\Omega^{n+1} N_S$ for some $N_S$. We are faced with the problem of constructing $N_S$ in some way from $S$. 

The case of $\Lambda$ Gorenstein is illuminating, and we rephrase Example \ref{gorensteinexample} as follows. Buchweitz \cite[5.0]{Bu86} has constructed a Gorenstein-projective approximation 
\begin{align*}
	S^{\sf GP} \to S	 
\end{align*}
meaning that $S^{\sf GP}$ is Gorenstein-projective and any other such map uniquely factors through it in the stable category, with the property that taking $d$-th syzygy induces a stable equivalence 
\begin{align*}
	\Omega^d S^{\sf GP} \xrightarrow{\simeq} \Omega^d S.	
\end{align*}
The syzygy functor is invertible on the stable category of Gorenstein-projectives, and setting $N_S := \Omega^{-1} S^{\sf GP}$ gives stable equivalences 
\begin{align*}
	\Omega^{d+1} N_S \xrightarrow{\simeq} \Omega^d S^{\sf GP} \xrightarrow{\simeq} \Omega^d S.  
\end{align*}

Over general $\Lambda$, it turns out that fragments of the Gorenstein-projective machinery still make sense in $\stmodsf \Lambda$ and these can be used to provide a canonical candidate for $N_S$. The crucial tool is the left adjoint to the syzygy functor constructed by Auslander--Reiten. 

\subsection*{The left adjoint to syzygies}\label{subsectionleftadjoint} Let $\Lambda$ be a Noetherian semiperfect ring throughout. We denote the Auslander--Bridger transpose by ${\rm Tr}: \stmodsf \Lambda \to \stmodsf \Lambda^{op}$. Recall that ${\rm Tr}$ is computed by dualising a presentation by finite projectives 
\begin{align*}
	P_1 \to P_0 \to M \to 0
\end{align*} 
to a presentation 
\begin{align*}
	P_0^* \to P_1^* \to {\rm Tr}M \to 0.	
\end{align*}
The transpose is a contravariant duality in that ${\rm Tr}^2 = {\rm id}$. We then define the suspension functor as $\Sigma := {\rm Tr}\ \! \Omega \ \! {\rm Tr}: \stmodsf \Lambda \to \stmodsf \Lambda$. 

\begin{prop}[{\cite[Cor. 3.4]{AR96}}]The pair $(\Sigma, \Omega)$ is an adjoint pair on $\stmodsf \Lambda$. 
\end{prop}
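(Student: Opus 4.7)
The plan is to route both sides of the desired bijection $\underline{\mathrm{Hom}}_\Lambda(\Sigma M, N) \cong \underline{\mathrm{Hom}}_\Lambda(M, \Omega N)$ through a common $\mathrm{Tor}$-group. The key identity to establish first is the classical Auslander--Bridger formula
$$\underline{\mathrm{Hom}}_\Lambda(M, N) \cong \mathrm{Tor}_1^\Lambda(\mathrm{Tr}\, M, N),$$
natural in $M, N \in \modsf \Lambda$. Starting from a finite projective presentation $P_1 \to P_0 \to M \to 0$ and dualizing produces the defining four-term exact sequence $0 \to M^* \to P_0^* \to P_1^* \to \mathrm{Tr}\, M \to 0$. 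Using the canonical identifications $P_i^* \otimes_\Lambda N \cong \mathrm{Hom}_\Lambda(P_i, N)$, the kernel of $P_0^* \otimes_\Lambda N \to P_1^* \otimes_\Lambda N$ is $\mathrm{Hom}_\Lambda(M, N)$, while the image of the evaluation map $M^* \otimes_\Lambda N \to P_0^* \otimes_\Lambda N$ consists precisely of maps factoring through a finitely generated free module. The resulting quotient is $\underline{\mathrm{Hom}}_\Lambda(M, N)$ on one hand; on the other, extending the four-term sequence leftwards to a full projective resolution of $\mathrm{Tr}\, M$ and taking the first homology of the tensored complex identifies the same quotient with $\mathrm{Tor}_1^\Lambda(\mathrm{Tr}\, M, N)$.

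With the key identity available, the adjunction is a short calculation. The long exact sequence of $\mathrm{Tor}$ attached to a syzygy sequence $0 \to \Omega N \to P \to N \to 0$ with $P$ projective gives the dimension shift $\mathrm{Tor}_1^\Lambda(X, \Omega N) \cong \mathrm{Tor}_2^\Lambda(X, N)$ for every left $\Lambda$-module $X$, so
$$\underline{\mathrm{Hom}}_\Lambda(M, \Omega N) \cong \mathrm{Tor}_1^\Lambda(\mathrm{Tr}\, M, \Omega N) \cong \mathrm{Tor}_2^\Lambda(\mathrm{Tr}\, M, N).$$
On the other side, the stable involutivity $\mathrm{Tr}^2 = \mathrm{id}$ gives $\mathrm{Tr}(\Sigma M) = \mathrm{Tr}(\mathrm{Tr}\, \Omega\, \mathrm{Tr}\, M) \simeq \Omega\, \mathrm{Tr}\, M$, so applying the key identity to $(\Sigma M, N)$ and shifting syzygies once more yields
$$\underline{\mathrm{Hom}}_\Lambda(\Sigma M, N) \cong \mathrm{Tor}_1^\Lambda(\Omega\, \mathrm{Tr}\, M, N) \cong \mathrm{Tor}_2^\Lambda(\mathrm{Tr}\, M, N).$$
Naturality in $M$ and $N$ is inherited from the naturality of the presentations, evaluation maps and connecting homomorphisms used throughout.

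The main technical point requiring care is ensuring that each identification descends cleanly to the stable categories rather than merely to the module categories. Concretely: first, the image of $M^* \otimes_\Lambda N \to \mathrm{Hom}_\Lambda(M, N)$ must be recognised as exactly the ideal of maps factoring through an arbitrary projective, which uses that any finitely generated projective is a summand of a finite free; and second, the dimension-shift isomorphism $\mathrm{Tor}_1(X, \Omega N) \cong \mathrm{Tor}_2(X, N)$ must be verified to be canonical modulo the freedom in the choice of projective cover computing $\Omega N$. Once these checks are in place, the composite of natural bijections provides the unit and counit data witnessing $(\Sigma, \Omega)$ as an adjoint pair on $\stmodsf \Lambda$.
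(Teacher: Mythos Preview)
The paper does not supply its own proof of this proposition; it simply cites \cite[Cor.~3.4]{AR96}. Your argument via the Auslander--Bridger formula $\underline{\mathrm{Hom}}_\Lambda(M,N) \cong \mathrm{Tor}_1^\Lambda(\mathrm{Tr}\,M, N)$ followed by dimension shifting on each side is correct and is essentially the approach taken in the cited Auslander--Reiten paper, so there is nothing further to compare.
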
 

The module $\Sigma M$ has a simple presentation in terms of $M$. Let $P_* \xrightarrow{\sim} M$ and $Q_* \xrightarrow{\sim} M^*$ be projective resolutions, and write $\sigma: M \to M^{**}$ for the canonical evaluation map. Following Buchweitz \cite[5.6.1]{Bu86}, we introduce the Norm map of $M$ given as the composition 
\begin{align*} 
\xymatrix@R=5pt{ 
	\cdots \ar[r]& P_1 \ar[r]&  P_0 \ar[dr] \ar[rrr]^-{\textup{Norm}} &&& Q_0^* \ar[r]& Q_1^* \ar[r]& \cdots \\ 
	&&& M \ar[r]^-{\sigma}& M^{**} \ar[ur] 
}
\end{align*}

The next lemma follows from unpacking the definition. 

\begin{lem}\label{cokernorm} Let $M \in \modsf \Lambda$. We have a presentation
	\begin{align*}
		P_0 \xrightarrow{ {\rm Norm}} Q_0^* \to \Sigma M \to 0. 	
	\end{align*}
	In other words, $\Sigma M$ is the cokernel of the Norm map. 
\end{lem}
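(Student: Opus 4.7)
The plan is to unpack the definition $\Sigma M = {\rm Tr}\,\Omega\,{\rm Tr}\,M$ step by step, using the chosen projective resolutions $P_* \xrightarrow{\sim} M$ and $Q_* \xrightarrow{\sim} M^*$, and then identify the resulting cokernel with the cokernel of the Norm map via naturality of $\sigma$.

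First, dualising the presentation $P_1 \to P_0 \to M \to 0$ gives a presentation $P_0^* \to P_1^* \to {\rm Tr}\,M \to 0$. Next, $\Omega({\rm Tr}\,M)$ is the kernel of $P_1^* \twoheadrightarrow {\rm Tr}\,M$, which equals the image of $P_0^* \to P_1^*$. Since the dualised complex is left-exact, the kernel of $P_0^* \to P_1^*$ is $M^*$, so $\Omega({\rm Tr}\,M) \cong P_0^*/M^*$. I can then produce a finite projective presentation of this module by composing the surjection $Q_0 \twoheadrightarrow M^*$ with the inclusion $M^* \hookrightarrow P_0^*$, obtaining
\begin{align*}
	Q_0 \longrightarrow P_0^* \longrightarrow \Omega({\rm Tr}\,M) \longrightarrow 0.
\end{align*}

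Applying ${\rm Tr}$ to this presentation yields
\begin{align*}
	P_0^{**} \longrightarrow Q_0^* \longrightarrow \Sigma M \longrightarrow 0,
\end{align*}
where the first map is the dual of the composite $Q_0 \twoheadrightarrow M^* \hookrightarrow P_0^*$, i.e.\ it factors as $P_0^{**} \to M^{**} \hookrightarrow Q_0^*$. Since $P_0$ is finite projective, the evaluation map $\sigma_{P_0}: P_0 \to P_0^{**}$ is an isomorphism, so I can precompose and obtain an equivalent presentation $P_0 \to Q_0^* \to \Sigma M \to 0$ with first map $P_0 \xrightarrow{\sigma_{P_0}} P_0^{**} \to M^{**} \hookrightarrow Q_0^*$.

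It remains to check that this composite equals the Norm map. Naturality of $\sigma$ applied to $P_0 \twoheadrightarrow M$ gives a commutative square, so $P_0 \xrightarrow{\sigma_{P_0}} P_0^{**} \to M^{**}$ agrees with $P_0 \twoheadrightarrow M \xrightarrow{\sigma_M} M^{**}$. Composing with $M^{**} \hookrightarrow Q_0^*$ then reproduces the factorisation of the Norm map in the diagram defining it. This identifies the cokernel of $\textup{Norm}$ with $\Sigma M$, finishing the proof.

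The only real subtlety is bookkeeping: keeping track of which presentation is being dualised at each stage and invoking naturality of $\sigma$ at the end; there is no substantive obstacle beyond this.
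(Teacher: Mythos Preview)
Your proof is correct and is precisely the unpacking the paper alludes to; the paper's own proof is the single sentence ``follows from unpacking the definition'', and you have carried this out in full detail with no deviation in approach.
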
 
Define $tM := {\rm ker}(\sigma)$ to be the torsion part of $M$, and we say that $M$ is torsionfree when $tM = 0$. From the above lemma, we see that 
\begin{align*}
	\Omega \Sigma M \simeq {\rm im}({\rm Norm}: P_0 \to Q_0^*) = {\rm im}(\sigma: M \to M^{**}) \cong M/tM.
\end{align*} 
In particular $\Omega \Sigma M \simeq M$ whenever $M$ is torsionfree. This hints at a close relationship between torsionfreeness and $\Omega\Sigma(-)$; we will see a more canonical statement below. 

\subsection*{Approximation towers and torsionfreeness}\label{subsectionapproximationtowers} 
The adjoints $(\Sigma, \Omega)$ are endofunctors, and taking powers we obtain further adjoint pairs $(\Sigma^n, \Omega^n)$ for each $n \geq 1$. Let us write 
\begin{align*}
&\eta_n: M \to \Omega^n \Sigma^n M \\ 
&\varepsilon_n: \Sigma^n \Omega^n M \to M 
\end{align*}
for the unit and the counit of this adjunction. Associated is an approximation tower 
\begin{align*} 
 M \to \Omega \Sigma M \to \Omega^2 \Sigma^2 M \to \cdots \to \Omega^n \Sigma^n M \to \Omega^{n+1} \Sigma^{n+1} M \to \cdots	
 \end{align*}
 where each map is obtained from the unit $\eta_1: \Sigma^i M \to \Omega \Sigma(\Sigma^i M)$ by applying $\Omega^i(-)$. Dually, the counit map $\varepsilon_1$ gives rise to an approximation tower 
 \begin{align*} 
	 \cdots \to \Sigma^{n+1} \Omega^{n+1} M \to \Sigma^n \Omega^n M \to \cdots \to \Sigma^2 \Omega^2 M \to \Sigma \Omega M \to M
\end{align*}
with dual properties, and the two constructions are exchanged by ${\rm Tr}(-)$. We record a simple but useful property, which follows from standard arguments on adjunctions. 
\begin{lem}\label{unitcounitlemma} Consider the morphisms in the above tower for $M$. 
	\begin{enumerate}[i)]
	\item The composition $M \to \dots \to \Omega^n \Sigma^n M$ agrees with the unit map $\eta_n$. 
	\item The composition $\Sigma^n \Omega^n M \to \dots \to M$ agrees with the counit map $\varepsilon_n$. 
\end{enumerate} 
\end{lem}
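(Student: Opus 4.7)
The plan is to prove both (i) and (ii) by induction on $n$, using the standard formula for the unit and counit of a composite of adjunctions. Specifically, if $(L_1, R_1)$ and $(L_2, R_2)$ are two adjoint pairs, then $(L_2L_1, R_1R_2)$ is again an adjoint pair whose unit at $X$ equals $R_1\eta^2_{L_1 X} \circ \eta^1_X$ and whose counit at $Y$ equals $\varepsilon^2_Y \circ L_2 \varepsilon^1_{R_2 Y}$. This is a routine consequence of the triangle identities which I would either cite or sketch in a line or two.

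For (i), the base case $n = 1$ is immediate from the definition of the first map in the tower. For the inductive step, I would decompose $(\Sigma^{n+1}, \Omega^{n+1}) = (\Sigma \cdot \Sigma^n,\, \Omega^n \cdot \Omega)$, treating $(\Sigma^n, \Omega^n)$ as the ``inner'' adjunction with unit $\eta_n$ and $(\Sigma, \Omega)$ as the ``outer'' one with unit $\eta$. The composite-adjunction formula yields
\[
\eta_{n+1,M} \;=\; \Omega^n(\eta_{\Sigma^n M}) \circ \eta_{n,M}.
\]
By the paper's definition, the $(n+1)$-st map in the tower is obtained by applying $\Omega^n$ to $\eta: \Sigma^n M \to \Omega\Sigma(\Sigma^n M)$, i.e.\ it is precisely $\Omega^n(\eta_{\Sigma^n M})$. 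Combined with the inductive hypothesis that the composition up to $\Omega^n\Sigma^n M$ equals $\eta_n$, this matches $\eta_{n+1}$.

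For (ii), I would use the reverse decomposition $(\Sigma^{n+1}, \Omega^{n+1}) = (\Sigma^n \cdot \Sigma,\, \Omega \cdot \Omega^n)$, with ``inner'' adjunction $(\Sigma,\Omega)$ and ``outer'' adjunction $(\Sigma^n, \Omega^n)$. The composite-counit formula then gives
\[
\varepsilon_{n+1, M} \;=\; \varepsilon_{n, M} \circ \Sigma^n(\varepsilon_{\Omega^n M}),
\]
and the $(n+1)$-st map of the counit tower is by construction $\Sigma^n(\varepsilon_{\Omega^n M})$. The inductive hypothesis identifies the remaining composition with $\varepsilon_n$, yielding $\varepsilon_{n+1}$.

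The only potential pitfall is purely bookkeeping: the two decompositions $\Sigma^{n+1} = \Sigma \cdot \Sigma^n = \Sigma^n \cdot \Sigma$ give a priori different-looking expressions for $\eta_{n+1}$ (respectively $\varepsilon_{n+1}$), which agree by uniqueness of units and counits, but only one matches each tower naturally. I would therefore be careful to pick the decomposition that produces exactly the map described in the paper's construction of each tower. There is no substantive obstacle beyond this; the content is formal $2$-categorical.
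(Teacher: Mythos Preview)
Your proposal is correct and is precisely the ``standard arguments on adjunctions'' the paper invokes in lieu of a proof; the paper gives no further details beyond that phrase. Your inductive use of the composite-adjunction unit/counit formulas, together with your remark about choosing the decomposition matching each tower's construction, is exactly the intended content.
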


These towers were studied in \cite{AB69}, although under the different the notation $D_k^2 = \Omega^k \Sigma^k$ and $J_k^2 = \Sigma^k \Omega^k$, where $D_k = \Omega^k {\rm Tr}$ and $J_k := {\rm Tr} \Omega^k$. They are closely related to torsionfreeness properties of $M$ and ${\rm Tr}M$, as in the following definition. 
\begin{defn} A module $M$ is $n$-torsionfree is ${\rm Ext}^i_{\Lambda^{op}}({\rm Tr}M, \Lambda) = 0$ for $1 \leq i \leq n$. 
	
\end{defn}
From \cite{AB69} (see also Section \ref{subsectionhigherAuslanderBridgersequences}), we know that $M$ is $1$-torsionfree if and only if it is torsionfree, and $M$ is $2$-torsionfree if and only if it is reflexive. The next proposition collects standard properties of these approximation towers.  

\begin{prop}\label{towerproperties} Let $M \in \modsf \Lambda$. The following properties hold in $\stmodsf \Lambda$: 
	\begin{enumerate}[i)] 
	\item $\Omega\Sigma M \simeq M/tM$, and $M \to \Omega\Sigma M$ is represented by the map $M \twoheadrightarrow M/tM$. 
	\item $\Omega^2 \Sigma^2 M \simeq M^{**}$, and $M \to \Omega^2 \Sigma^2 M$ is represented by the map $M \xrightarrow{\sigma} M^{**}$. 
	\item $M \to \Omega\Sigma M$ is a stable equivalence if and only if $M$ is torsionfree. 
	\item $M \to \Omega^2\Sigma^2 M$ is a stable equivalence when $M$ is reflexive. The converse holds over rings over which double dual modules are reflexive. 
	\item If $M$ is $n$-torsionfree then the first $n$ layers are stable equivalences 
		\begin{align*}
			M \xrightarrow{\simeq} \Omega \Sigma M \xrightarrow{\simeq} \cdots \xrightarrow{\simeq} \Omega^n \Sigma^n M. 
		\end{align*}
	\item If ${\rm Tr}M$ is $n$-torsionfree then the first $n$ dual layers are stable equivalences 
 \begin{align*} 
	 \Sigma^{n} \Omega^{n} M \xrightarrow{\simeq} \cdots \xrightarrow{\simeq} \Sigma \Omega M \xrightarrow{\simeq} M. 
\end{align*}
	\item If $M$ is Gorenstein-projective, then so is $\Sigma M$ and we have stable equivalences
		\begin{align*}
			\Sigma \Omega M \xrightarrow{\simeq} M \xrightarrow{\simeq} \Omega \Sigma M. 	
		\end{align*}
\item If $\Lambda$ is Gorenstein of dimension $d$, then for any $M \in \underline{\mathsf{mod}}\ \! \Lambda$ the tower 
\begin{align*}
\cdots \to \Sigma^{n+1} \Omega^{n+1} M \to \Sigma^n \Omega^n M \to \Sigma^{n-1} \Omega^{n-1} M \to \dots \to \Sigma^1 \Omega^1 M \to M
\end{align*} 
collapses to stable equivalences $\Sigma^{n+1} \Omega^{n+1} M \xrightarrow{\simeq} \Sigma^{n} \Omega^n M$ for $n \geq d$ and the map $\Sigma^d \Omega^d M \to M$ is a Gorenstein-projective approximation. 
\end{enumerate} 
\end{prop}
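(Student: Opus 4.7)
My plan is to compute (i) and (ii) directly from the Norm description in Lemma \ref{cokernorm}, and then derive the remaining items as consequences. For (i), the short exact sequence $0 \to \Omega\Sigma M \to Q_0^* \to \Sigma M \to 0$ with $Q_0^*$ projective identifies $\Omega\Sigma M$ stably with ${\rm im}({\rm Norm})$; unpacking the factorisation ${\rm Norm}: P_0 \twoheadrightarrow M \xrightarrow{\sigma} M^{**} \hookrightarrow Q_0^*$, this image is precisely $M/tM$. The representation of $\eta_1$ by the quotient $M \twoheadrightarrow M/tM$ then follows by transferring $\mathrm{id}_{\Sigma M}$ through the adjunction $(\Sigma, \Omega)$. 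For (ii), I would iterate this computation once more: applying (i) to $\Sigma M$ gives $\Omega\Sigma^2 M \simeq \Sigma M / t\Sigma M$, and $\Omega^2\Sigma^2 M$ is the further syzygy of this quotient. To match it stably with $M^{**}$, I would dualise the presentation of $\Sigma M$ to extract $(\Sigma M)^* \simeq \Omega M^*$ and combine this with the classical Auslander--Bridger four-term sequence $0 \to tM \to M \xrightarrow{\sigma} M^{**} \to {\rm Ext}^2({\rm Tr}\, M, \Lambda) \to 0$; the identification of $\eta_2$ with $\sigma$ then follows from Lemma \ref{unitcounitlemma} together with naturality of the unit.

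Parts (iii) and (iv) are formal consequences of (i) and (ii): $tM = 0$ makes $M \to M/tM$ an isomorphism, and reflexivity makes $\sigma$ an isomorphism in $\modsf \Lambda$. For the converse in (iv), the hypothesis that double duals be reflexive is exactly what converts a stable equivalence $M \simeq M^{**}$ into genuine reflexivity. For parts (v) and (vi), I would proceed by induction on $n$, leveraging the higher Auslander--Bridger sequences constructed in Section \ref{subsectionhigherAuslanderBridgersequences}: vanishing of ${\rm Ext}^i({\rm Tr}\, M, \Lambda)$ for $1 \leq i \leq n$ forces each successive layer in the tower $M \to \Omega\Sigma M \to \cdots \to \Omega^n\Sigma^n M$ to be a stable equivalence, via dimension-shifted analogs of (i) and (ii). Part (vi) is the ${\rm Tr}$-dual, obtained by applying the transpose duality to $M$ and exchanging units with counits.

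For (vii), the key observation is that ${\rm Tr}$ preserves Gorenstein-projectivity, so $\Sigma M = {\rm Tr}\, \Omega\, {\rm Tr}\, M$ is Gorenstein-projective whenever $M$ is; moreover Gorenstein-projectives are $n$-torsionfree for every $n$, so (v) and (vi) apply in all degrees and yield $\Sigma\Omega M \simeq M \simeq \Omega\Sigma M$. Part (viii) then follows by applying (vii) to $\Omega^n M$: when $\Lambda$ is Gorenstein of dimension $d$, Buchweitz's characterisation ensures $\Omega^n M$ is Gorenstein-projective for $n \geq d$, so that $\Sigma\Omega(\Omega^n M) \simeq \Omega^n M$ and hence $\Sigma^{n+1}\Omega^{n+1}M \simeq \Sigma^n\Omega^n M$. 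The identification of $\Sigma^d \Omega^d M \to M$ as a Gorenstein-projective approximation follows from the adjunction $(\Sigma^d, \Omega^d)$ combined with universality of the counit restricted to the subcategory of Gorenstein-projectives. I expect the main obstacle to be the careful alignment in (ii) between the stable equivalence $\Omega^2\Sigma^2 M \simeq M^{**}$ and the representation of $\eta_2$ by $\sigma$, which requires threading adjunction naturality and the higher Auslander--Bridger machinery together consistently.
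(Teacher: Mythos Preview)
Your overall strategy is sound and close to the paper's, but there is one genuine circularity and a couple of places where the paper's route is both simpler and safer.

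\textbf{Circularity in (v)--(vi).} You propose to prove (v) and (vi) by invoking the higher Auslander--Bridger sequences of Section~\ref{subsectionhigherAuslanderBridgersequences}. But Proposition~\ref{propositionreflexivesequence} in that section explicitly uses Proposition~\ref{towerproperties}~vi) in its proof, so this is circular. The paper avoids this entirely with a direct argument: if $M$ is $n$-torsionfree, then for each $0 \leq k \leq n-1$ one computes
\[
{\rm Ext}^1_{\Lambda^{op}}({\rm Tr}\,\Sigma^k M, \Lambda) = {\rm Ext}^1_{\Lambda^{op}}(\Omega^k {\rm Tr}\, M, \Lambda) = {\rm Ext}^{k+1}_{\Lambda^{op}}({\rm Tr}\, M, \Lambda) = 0,
\]
so each $\Sigma^k M$ is torsionfree, whence $\Sigma^k M \xrightarrow{\simeq} \Omega\Sigma^{k+1} M$ by (iii), and applying $\Omega^k$ gives the layer equivalence. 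This is the ``dimension-shifted analog of (i)'' you allude to, and it suffices on its own; the reference to Section~\ref{subsectionhigherAuslanderBridgersequences} should be dropped.

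\textbf{The converse in (iii).} You only argue the forward direction. For the converse, a stable equivalence $M \simeq M/tM$ gives $M \oplus P \cong M/tM \oplus Q$ in $\modsf \Lambda$ for projectives $P, Q$, so $M$ is a summand of a torsionfree module and hence torsionfree. This step is short but not automatic.

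\textbf{A cleaner route for (ii).} Your iteration of (i) applied to $\Sigma M$ can be made to work, but the paper's argument is a one-liner: applying the identity $X^* \simeq \Omega^2 {\rm Tr}\, X$ twice yields $M^{**} \simeq \Omega^2 {\rm Tr}\, M^* \simeq \Omega^2 {\rm Tr}\, \Omega^2 {\rm Tr}\, M = \Omega^2 \Sigma^2 M$. This bypasses the need to analyse $t(\Sigma M)$ or $(\Sigma M)^*$ separately. Your approach to (vii) and (viii) matches the paper's.
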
 
\begin{proof}This is mostly standard (up to differing language), and we provide proofs or point to the literature as needed. 

Case i). Realising $\Sigma M$ as the cokernel of the Norm map (Lemma \ref{cokernorm}), we saw that $\Omega \Sigma M \simeq M/tM$. The claim about the unit map is easiest to see from the presentation of the adjoint isomorphism given in \cite[Section 3]{AR96}. \bigskip

Case ii). Applying the identity $X^* \simeq \Omega^2 {\rm Tr}X$ for $X \in \modsf \Lambda$ twice gives 
\begin{align*}
	M^{**} \simeq \Omega^2 {\rm Tr} \ \!M^* \simeq \Omega^2 {\rm Tr}\ \! \Omega^2 {\rm Tr}\ \! M = \Omega^2 \Sigma^2 M. 
\end{align*}
The claim about the unit map again is easiest to see from the presentation of the adjoint isomorphism in \cite[Section 3]{AR96}.\bigskip

Case iii). If $M$ is torsionfree then $M \xrightarrow{\simeq} \Omega \Sigma M$ from i). Conversely if $M \xrightarrow{\simeq} \Omega \Sigma M$ is a stable equivalence then $M \simeq \Omega \Sigma M \simeq M/tM$, and there are projective modules $P, Q$ for which $M \oplus P \cong M/tM \oplus Q$. Therefore $M$ is a direct summand of a torsionfree module and so is torsionfree. \bigskip

Case iv). The first implication follows from ii). Conversely, if $M^{**}$ is reflexive and $M \xrightarrow{\simeq} \Omega^2 \Sigma^2 M \simeq M^{**}$ then $M \oplus P \cong M^{**} \oplus Q$ for some projectives $P, Q$. Therefore $M$ is a summand of a reflexive module and so is reflexive. \bigskip

Case v). Since $M$ is $n$-torsionfree, in particular torsionfree, we have $M \xrightarrow{\simeq} \Omega \Sigma M$ by i). We claim that $\Sigma^k M$ is also torsionfree for all $0 \leq k \leq n-1$.  Using ${\rm Tr} \Sigma^k \simeq \Omega^k {\rm Tr}$ gives 
\begin{align*}
	{\rm Ext}^1({\rm Tr} \Sigma^k M, \Lambda) 
	= {\rm Ext}^{k+1}({\rm Tr} M, \Lambda) = 0 
\end{align*} 
since $1 \leq k+1 \leq n$. It follows that $\Sigma^k M \xrightarrow{\simeq} \Omega \Sigma^{k+1} M$ is a stable equivalence, and applying $\Omega^k(-)$ gives $\Omega^k \Sigma^k M \xrightarrow{\simeq} \Omega^{k+1} \Sigma^{k+1} M$ as we wanted. \bigskip

Case vi). This is dual to v). \bigskip

Case vii). Let $P_* \xrightarrow{\simeq} M$ and $Q_* \xrightarrow{\simeq} M^*$ be projective resolutions, and consider the complex $C_*$ constructed by gluing along the Norm map 
\begin{align*} 
\xymatrix@R=5pt{ 
	\cdots \ar[r]& P_1 \ar[r]&  P_0 \ar[dr] \ar[rrr]^-{\textup{Norm}} &&& Q_0^* \ar[r]& Q_1^* \ar[r]& \cdots \\ 
	&&& M \ar[r]^-{\sigma}& M^{**} \ar[ur] 
} 
\end{align*} 
If $M$ is Gorenstein-projective then this is a complete resolution for $M$, and $\Sigma M = {\rm coker}({\rm Norm})$ has complete resolution given by the suspension of $C_*$ as a complex. Hence $\Sigma M$ is Gorenstein-projective. For the second claim, recall that Gorenstein-projectives are characterised by the condition (\cite[Prop. 3.8]{AB69}) 
\begin{align*}
	{\rm Ext}_\Lambda^i(M, \Lambda) = 0 \textup{ and } {\rm Ext}^i_{\Lambda^{op}}({\rm Tr}M, \Lambda) = 0 \textup{ for } i > 0. 
\end{align*}
Hence $M$ and ${\rm Tr}M$ are torsionfree and so $\Sigma \Omega M \xrightarrow{\simeq} M \xrightarrow{\simeq} \Omega \Sigma M$ follows from i). \bigskip

Case viii). When $\Lambda$ is Gorenstein of dimension $d$ then $\Omega^n M$ is Gorenstein-projective for any $n \geq d$. By vii) we have $\Sigma \Omega(\Omega^nM) \xrightarrow{\simeq} \Omega^n M$, and applying $\Sigma^n(-)$ then gives $\Sigma^{n+1}\Omega^{n+1}M \xrightarrow{\simeq} \Sigma^n \Omega^n M$. 

For the second claim, note that $\Sigma^d \Omega^d M$ is Gorenstein-projective. The Gorenstein-projectives over a Gorenstein ring are precisely the modules of the form $\Omega^d X$, and as Gorenstein-projectives are closed under ${\rm Tr}(-)$ these are also precisely the modules of the form $\Sigma^d Y$. By standard properties of adjunctions \cite[Section 3.1]{Bo94}, any morphism from a Gorenstein-projective $f: \Sigma^d Y \to M$ factors uniquely as 
\begin{align*}
\xymatrix{
	\Sigma^d Y \ar[dr]_-{f} \ar[r]^-{\Sigma^d(g)}& \Sigma^d \Omega^d M \ar[d]^-{\varepsilon_d} \\ 
	& M
}
\end{align*}
with $g: Y \to \Omega^d M$ the adjoint morphism to $f$, as we wanted. \end{proof}

\subsection*{Revisiting the Gorenstein case}\label{subsectionrevisitinggorenstein} 
With this machinery in place, we can now revisit the argument bounding the delooping level of a Gorenstein ring given in Example \ref{gorensteinexample}. Let $\Lambda$ be Gorenstein of dimension $d$ and let $S$ be a simple module. By Proposition \ref{towerproperties} viii) the counit map 
\begin{align*} 
	\Sigma^d \Omega^d S \to S 	
\end{align*}
is a Gorenstein-projective approximation, which we saw earlier is sent under $\Omega^d(-)$ to a stable equivalence 
\begin{align*} 
	\Omega^d \Sigma^d \Omega^d S \xrightarrow{\simeq} \Omega^d S. 
\end{align*}
Since $\Sigma^d \Omega^d S$ is Gorenstein-projective, it is torsionfree and so we have a stable equivalence 
\begin{align*}
	\Sigma^d \Omega^d S \xrightarrow{\simeq} \Omega \Sigma(\Sigma^d \Omega^d S). 
\end{align*} 
Combining the two, we obtain a pair of morphisms
\begin{align*}
	S \leftarrow \Sigma^d \Omega^d S \xrightarrow{\simeq} \Omega \Sigma^{d+1} \Omega^d S
\end{align*}
which are sent under $\Omega^d(-)$ to stable equivalences
\begin{align*} 
	\Omega^d S \xleftarrow{\simeq} \Omega^d \Sigma^d \Omega^d S \xrightarrow{\simeq} \Omega^{d+1} \Sigma^{d+1} \Omega^d S. 
\end{align*} 
The left hand side morphism is always a retract in the general case (as in any adjunction), with section given by the unit map $\eta_d$ in the opposite direction; this section is the natural morphism for us in general, and happens to be a stable equivalence only because $\Omega^d S$ is Gorenstein-projective (apply Proposition \ref{towerproperties} v)). We can therefore turn this diagram around to the more canonical 
\begin{align*}
	\Omega^d S \xrightarrow{\simeq} \Omega^d \Sigma^d \Omega^d S \xrightarrow{\simeq} \Omega^{d+1} \Sigma^{d+1} \Omega^d S.
\end{align*} 
The resulting composition is also the unit map $\eta_{d+1}$ by Lemma \ref{unitcounitlemma}. Setting $N_S := \Sigma^{d+1} \Omega^d S$, we obtain the sought-after canonical construction mentioned at the beginning of Section \ref{subsectionadjunction}. 

In this way, we see that the inequality $\dell S \leq d$ over Gorenstein rings of dimension $d$ is a consequence of a deeper structural statement, phrased in terms of the general machinery associated to the adjoint pair $(\Sigma^n, \Omega^n)$. We now use this machinery to formulate a pair of theorems valid over general Noetherian semiperfect rings $\Lambda$. 

\subsection*{The delooping level and a torsionfreeness criterion}\label{subsectiontorsionfreenesscriterion} Recall that we have defined the delooping level of $M \in \stmodsf \Lambda$ as 
\begin{align*}
	\dell M := \inf \{ n \geq 0 \ | \ \Omega^n M \textup{ is a stable retract of } \Omega^{n+1} N \textup{ for some } N \in \stmodsf \Lambda \}
\end{align*} 
and $\dell \Lambda = \sup_S \dell S$, where $S$ runs over the simples. Since $\findim \Lambda^{op} \leq \dell \Lambda$ by Proposition \ref{inequalities} (resp. $\Findim \Lambda^{op} \leq \dell \Lambda$ for Artin algebras), any method to bound the delooping level of simple modules gives a method to prove finiteness of the finitistic dimension. In this section, we provide two such results. 

The first theorem is a universal characterisation for $\dell M$, which follows immediately from the pair $(\Sigma^n, \Omega^n)$ forming an adjunction. We say for short that the unit map $\eta_{n}: X \to \Omega^{n} \Sigma^{n} X$ splits if it is a section in $\stmodsf \Lambda$. 
\begin{thm}\label{theoremone} Let $\Lambda$ be a Noetherian semiperfect ring. The following are equivalent for any $M \in \stmodsf \Lambda$ and $n \geq 0$: 
	\begin{enumerate}[i)] 
	\item $\Omega^n M$ is a stable retract of $\Omega^{n+1} N$ for some $N \in \stmodsf \Lambda$. 
	\item $\Omega^n M$ is a stable retract of $\Omega^{n+1} \Sigma^{n+1} \Omega^n M$. 
	\item The unit map $\eta_{n+1}: \Omega^n M \to \Omega^{n+1} \Sigma^{n+1} \Omega^n M$ splits in $\stmodsf \Lambda$. 
\end{enumerate} 
Therefore the delooping level is equal to 
\begin{align*}
	\dell M = \inf \{n \geq 0 \ | \ \eta_{n+1}: \Omega^n M \to \Omega^{n+1} \Sigma^{n+1} \Omega^n M \textup{ splits in } \stmodsf \Lambda \}. 
\end{align*}
\end{thm}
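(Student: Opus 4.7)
The plan is to prove the three implications $(iii) \Rightarrow (ii) \Rightarrow (i) \Rightarrow (iii)$, with the adjunction $(\Sigma^{n+1}, \Omega^{n+1})$ doing all the work in the non-trivial direction.

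The implication $(iii) \Rightarrow (ii)$ is immediate: a splitting of the unit $\eta_{n+1}$ exhibits $\Omega^n M$ as a stable retract of $\Omega^{n+1} \Sigma^{n+1} \Omega^n M$. The implication $(ii) \Rightarrow (i)$ is also immediate, by taking $N := \Sigma^{n+1} \Omega^n M$ so that $\Omega^{n+1} N = \Omega^{n+1} \Sigma^{n+1} \Omega^n M$ is of the required form.

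The heart of the argument is $(i) \Rightarrow (iii)$. Suppose we have morphisms $s : \Omega^n M \to \Omega^{n+1} N$ and $\pi : \Omega^{n+1} N \to \Omega^n M$ in $\stmodsf \Lambda$ with $\pi s = {\rm id}_{\Omega^n M}$. The idea is that, by the adjunction $(\Sigma^{n+1}, \Omega^{n+1})$, the map $s$ into $\Omega^{n+1} N$ corresponds to an adjoint map $\tilde s : \Sigma^{n+1} \Omega^n M \to N$, and the standard zig-zag/triangle identity for the unit and counit tells us that $s$ factors canonically through the unit map as
\begin{align*}
s \;=\; \Omega^{n+1}(\tilde s) \circ \eta_{n+1} : \Omega^n M \xrightarrow{\eta_{n+1}} \Omega^{n+1} \Sigma^{n+1} \Omega^n M \xrightarrow{\Omega^{n+1}(\tilde s)} \Omega^{n+1} N.
\end{align*}
Composing with $\pi$ gives ${\rm id}_{\Omega^n M} = \pi s = \bigl( \pi \circ \Omega^{n+1}(\tilde s) \bigr) \circ \eta_{n+1}$, so $\pi \circ \Omega^{n+1}(\tilde s)$ is a stable retraction of $\eta_{n+1}$, establishing $(iii)$. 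The final formula for $\dell M$ then drops out by comparing the defining infimum for $\dell M$ with the equivalent condition $(iii)$.

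The only point needing any care, and the ``main obstacle'' in the loose sense, is verifying that the adjunction-theoretic factorisation $s = \Omega^{n+1}(\tilde s) \circ \eta_{n+1}$ holds in $\stmodsf \Lambda$; but this is exactly the universal property of the unit of the adjoint pair $(\Sigma^{n+1}, \Omega^{n+1})$ established via the Auslander--Reiten construction recalled above. No further computation is required.
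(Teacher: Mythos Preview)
Your proof is correct and follows essentially the same approach as the paper: both establish the trivial chain $(iii) \Rightarrow (ii) \Rightarrow (i)$ and then prove $(i) \Rightarrow (iii)$ by factoring the given section through the unit $\eta_{n+1}$ via the universal property of the adjunction $(\Sigma^{n+1}, \Omega^{n+1})$, and composing with the retraction. The paper's argument is presented with a commutative diagram but is otherwise identical in content.
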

\begin{proof} Clearly $iii) \implies ii) \implies i)$, and it suffices to show $i) \implies iii)$. We use that any morphism $f: X \to \Omega^{n+1} N$ in $\stmodsf \Lambda$ factors as 
\begin{align*}
	\xymatrix{X \ar[dr]_-{f} \ar[r]^-{\eta_{n+1}}& \Omega^{n+1}\Sigma^{n+1} X \ar[d]^-{\Omega^{n+1}\overline{f}} \\ 
		& \Omega^{n+1} N
} 
\end{align*}
where $\overline{f}: \Sigma^{n+1} X \to N$ corresponds to $f$ under the adjunction \cite[Section 3.1]{Bo94}. Applying this $X = \Omega^n M$ and any section $\iota: \Omega^n M \to \Omega^{n+1} N$ to a retract $\pi$ in $\stmodsf \Lambda$, we obtain a commutative diagram  
\begin{align*}
	\xymatrix{\Omega^n M \ar[dr]_-{\iota} \ar[r]^-{\eta_{n+1}}& \Omega^{n+1}\Sigma^{n+1} \Omega^n M \ar[d]^-{\Omega^{n+1}\overline{\iota}} \\ 
		& \Omega^{n+1} N \ar@/^12pt/[ul]^-{\pi}. 
} 
\end{align*}
Therefore $\eta_{n+1}$ splits as claimed. The last statement is clear. 
\end{proof} 
Theorem \ref{theoremone} has for immediate consequences: 

\begin{cor} Let $\Lambda$ be Noetherian semiperfect and $M \in \stmodsf \Lambda$. Assume that $\eta_{n+1}: \Omega^n M \to \Omega^{n+1} \Sigma^{n+1} \Omega^n M$ splits. Then $\eta_{m+1}: \Omega^m M \to \Omega^{m+1} \Sigma^{m+1} \Omega^m M$ splits for any $m \geq n$. 
\end{cor}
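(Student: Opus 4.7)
The plan is to reduce the corollary directly to Theorem \ref{theoremone} via functoriality of $\Omega$ on the stable module category. By that theorem, the hypothesis that $\eta_{n+1}$ splits is equivalent to the existence of some $N \in \stmodsf \Lambda$ (in fact $N = \Sigma^{n+1}\Omega^n M$ works) such that $\Omega^n M$ is a stable retract of $\Omega^{n+1} N$. The conclusion we want, again by Theorem \ref{theoremone}, is that $\Omega^m M$ is a stable retract of $\Omega^{m+1} N'$ for some $N'$. So the entire question becomes: does the property ``is a stable retract of an $(n+1)$-st syzygy'' propagate upward under further syzygies?

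The key step is: apply the functor $\Omega^{m-n}: \stmodsf \Lambda \to \stmodsf \Lambda$ to the retract diagram $\iota: \Omega^n M \rightleftarrows \Omega^{n+1} N : \pi$ witnessing the hypothesis (with $\pi \iota = {\rm id}$ in $\stmodsf \Lambda$). Since $\Omega^{m-n}$ is a well-defined functor on the stable module category, it carries identity morphisms to identity morphisms and compositions to compositions, so the resulting pair
\begin{align*}
\Omega^{m-n}(\iota): \Omega^m M \rightleftarrows \Omega^{m+1} N : \Omega^{m-n}(\pi)
\end{align*}
again satisfies $\Omega^{m-n}(\pi) \circ \Omega^{m-n}(\iota) = {\rm id}_{\Omega^m M}$ in $\stmodsf \Lambda$. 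Thus $\Omega^m M$ is a stable retract of $\Omega^{m+1} N$, which is exactly condition i) of Theorem \ref{theoremone} (with the same module $N$).

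Invoking the equivalence $i) \Leftrightarrow iii)$ of Theorem \ref{theoremone} once more then gives that $\eta_{m+1}: \Omega^m M \to \Omega^{m+1} \Sigma^{m+1} \Omega^m M$ splits in $\stmodsf \Lambda$, as desired. There is no serious obstacle here: the whole content is that retracts are preserved by any functor, and the universal characterisation of Theorem \ref{theoremone} lets us pass freely between the concrete ``retract of a syzygy'' formulation and the abstract ``unit splits'' formulation without having to construct or analyse $\Sigma^{m+1} \Omega^m M$ explicitly.
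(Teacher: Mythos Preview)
Your proof is correct and is exactly the paper's approach, just spelled out in full: the paper's one-line proof ``Apply the equivalence $i) \iff iii)$ of Theorem \ref{theoremone}'' implicitly uses precisely the functoriality-of-$\Omega^{m-n}$ step you wrote down to pass from condition $i)$ at level $n$ to condition $i)$ at level $m$.
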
 
\begin{proof}
Apply the equivalence $i) \iff iii)$ of Theorem \ref{theoremone}.
\end{proof}
\begin{cor}\label{corollaryone} Let $\Lambda$ be Noetherian semiperfect. The following are equivalent: 
	\begin{enumerate}[i)]
	\item $\dell \Lambda \leq n$. 
	\item $\Omega^n S$ is a stable retract of $\Omega^{n+1} \Sigma^{n+1} \Omega^n S$ for each simple module $S$. 
	\item The unit map $\eta_{n+1}: \Omega^n S \to \Omega^{n+1} \Sigma^{n+1} \Omega^n S$ splits for each simple module $S$. 
\end{enumerate} 
\end{cor}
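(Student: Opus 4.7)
The plan is to reduce the statement directly to Theorem \ref{theoremone} applied to each simple module separately, and then to invoke the preceding monotonicity corollary (on splittings of $\eta_{n+1}$ being preserved under $n \mapsto m \geq n$) to promote the conclusion uniformly to level $n$.

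First, I would dispense with the easy directions $iii) \Rightarrow ii) \Rightarrow i)$. A splitting of $\eta_{n+1}$ for each simple $S$ immediately exhibits $\Omega^n S$ as a stable retract of $\Omega^{n+1} \Sigma^{n+1} \Omega^n S$, yielding $ii)$. Given $ii)$, taking $N = \Sigma^{n+1} \Omega^n S$ shows that $\dell S \leq n$ for each simple $S$, so $\dell \Lambda = \sup_S \dell S \leq n$ by Definition \ref{deloopinglevel}, giving $i)$.

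The content lies in the implication $i) \Rightarrow iii)$. The subtle point is that $\dell \Lambda \leq n$ only guarantees, for each simple $S$, that $\dell S \leq n$; unpacking the infimum defining $\dell S$, this merely says the retract property holds at some level $m_S \leq n$, not necessarily at level $n$ itself. Fixing such a simple $S$, I would choose $m_S \leq n$ and $N_S \in \stmodsf \Lambda$ realizing $\Omega^{m_S} S$ as a stable retract of $\Omega^{m_S+1} N_S$. Applying Theorem \ref{theoremone} with $M = S$ at level $m_S$, the equivalence $i) \Leftrightarrow iii)$ there yields that the unit map $\eta_{m_S+1}: \Omega^{m_S} S \to \Omega^{m_S+1} \Sigma^{m_S+1} \Omega^{m_S} S$ splits in $\stmodsf \Lambda$. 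To promote this conclusion from level $m_S$ to the uniform level $n$, I would invoke the preceding corollary on monotonicity of splittings, which ensures that if $\eta_{m_S+1}$ splits then $\eta_{m+1}$ splits for every $m \geq m_S$; specialising to $m = n$ (using $n \geq m_S$) gives $iii)$.

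The only genuine obstacle is this bookkeeping gap between the level $m_S$ at which the retract property is guaranteed and the target level $n$, and this is resolved precisely by the monotonicity corollary. No further calculation is needed beyond what Theorem \ref{theoremone} already supplies, so the proof is essentially a matter of assembling the pieces in the correct order and taking a supremum over the finite set of simples.
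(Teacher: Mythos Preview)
Your proof is correct and matches the paper's intended argument: the paper states Corollary \ref{corollaryone} without proof immediately after the monotonicity corollary, expecting the reader to combine Theorem \ref{theoremone} applied to each simple with the monotonicity step exactly as you do. Your identification of the gap between level $m_S$ and level $n$ as the only non-formal point, and its resolution via the preceding corollary, is precisely the structure the paper has set up.
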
 

As a result deciding the condition $\dell \Lambda \leq n$ becomes tractable, reduced to determining the structure of the modules $\Omega^{n+1} \Sigma^{n+1} \Omega^n S$ for $S$ simple. This suggests such modules should play an interesting role in the study of the finitistic dimension. 

The next theorem encapsulates the ``Gorenstein delooping'' argument laid out in the last section, which now becomes a torsionfreeness criterion valid for general $\Lambda$. 
\begin{thm}\label{theoremtwo} Let $M \in \stmodsf \Lambda$. If $\Sigma^n \Omega^n M$ is torsionfree then $\dell M \leq n$. 
\end{thm}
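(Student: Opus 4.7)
The plan is to reduce the claim, via Theorem \ref{theoremone}, to verifying that the unit map $\eta_{n+1}: \Omega^n M \to \Omega^{n+1}\Sigma^{n+1}\Omega^n M$ is a split monomorphism in $\stmodsf \Lambda$. Write $X = \Omega^n M$ and $Y = \Sigma^n X = \Sigma^n \Omega^n M$, which is torsionfree by hypothesis.

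The central step will be to factor $\eta_{n+1}^X$ through $\Omega^n \Sigma^n X$ using the standard formula for the unit of a composition of adjunctions. Since $(\Sigma^{n+1}, \Omega^{n+1})$ is obtained by composing $(\Sigma^n, \Omega^n)$ followed by $(\Sigma, \Omega)$, I expect
\begin{align*}
\eta_{n+1}^X \ = \ \Omega^n(\eta_1^{Y}) \circ \eta_n^X : \ X \ \xrightarrow{\eta_n^X} \ \Omega^n \Sigma^n X \ \xrightarrow{\Omega^n(\eta_1^{Y})} \ \Omega^{n+1}\Sigma^{n+1} X.
\end{align*}
Once this decomposition is in hand, the two factors will split for independent reasons. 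First, $\eta_n^X = \eta_n^{\Omega^n M}$ is always split in $\stmodsf \Lambda$ by the triangle identity of $(\Sigma^n, \Omega^n)$, with retraction $\Omega^n(\varepsilon_n^M)$. Second, $\Omega^n(\eta_1^Y)$ is a stable equivalence: since $Y$ is torsionfree, Proposition \ref{towerproperties} iii) makes $\eta_1^Y: Y \to \Omega \Sigma Y$ a stable equivalence, and applying the functor $\Omega^n$ preserves this.

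The composition of a split monomorphism with a stable equivalence is again a split monomorphism (one precomposes the retraction of the first factor with an inverse in $\stmodsf \Lambda$ of the second), so $\eta_{n+1}^X$ splits, and Theorem \ref{theoremone} yields $\dell M \leq n$. The main technical obstacle is merely to set up the factorisation formula cleanly; this is the standard ``composite adjunction'' identity applied in the stable category, but a little care is needed to track that everything takes place in $\stmodsf \Lambda$ rather than $\modsf \Lambda$. Once the factorisation is recorded, the remainder of the argument reduces to the two one-line observations above.
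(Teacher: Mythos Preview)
Your proof is correct and follows essentially the same route as the paper: factor $\eta_{n+1}$ on $\Omega^n M$ as $\Omega^n(\eta_1^{\Sigma^n\Omega^n M}) \circ \eta_n$, observe that $\eta_n$ is split by the triangle identity while $\Omega^n(\eta_1^{\Sigma^n\Omega^n M})$ is a stable equivalence by torsionfreeness, and conclude via Theorem~\ref{theoremone}. The only cosmetic difference is that the paper records the factorisation via Lemma~\ref{unitcounitlemma} (the tower description) rather than the composite-adjunction formula you cite, but these are the same identity.
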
 
\begin{proof} A general property of adjunctions is that the counit map
	\begin{align*}
		\Sigma^n \Omega^n M \xrightarrow{\varepsilon_n} M	 
	\end{align*} 
	becomes a retract after applying $\Omega^n(-)$, with corresponding section the unit map 
	\begin{align*}
		\Omega^n M \xrightarrow{\eta_n} \Omega^n \Sigma^n \Omega^n M. 
	\end{align*}
If $\Sigma^n \Omega^n M$ is torsionfree then we have a stable equivalence
\begin{align*}
	\Sigma^n \Omega^n M \xrightarrow{\simeq} \Omega \Sigma(\Sigma^n \Omega^n M). 
\end{align*}
Applying $\Omega^n(-)$ and combining with the above gives a section
\begin{align*}
	\Omega^n M \xrightarrow{\eta_n} \Omega^n \Sigma^n \Omega^n M \xrightarrow{\simeq} \Omega^{n+1} \Sigma^{n+1} \Omega^n M 
\end{align*}
which Lemma \ref{unitcounitlemma} shows is simply the unit map $\eta_{n+1}$. Therefore $\dell M \leq n$. 
\end{proof}
Applying Theorem \ref{theoremtwo} to simple modules, we obtain a torsionfreeness criterion for finiteness of the delooping level. 
\begin{cor}[Torsionfreeness criterion]\label{torsionfreenesscriterion} Let $\Lambda$ be a Noetherian semiperfect ring. Assume that for each simple $S$, there is an $n_S$ such that $\Sigma^{n_S} \Omega^{n_S} S$ is torsionfree. Then $\dell \Lambda \leq \sup_S n_S < \infty$. 
\end{cor}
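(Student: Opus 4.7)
The plan is to deduce this corollary as an essentially immediate application of Theorem \ref{theoremtwo}, which has already done the real work, together with the fact that a Noetherian semiperfect ring has only finitely many isomorphism classes of simple modules.

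First I would, for each simple $S \in \modsf \Lambda$, invoke the hypothesis to obtain an integer $n_S \geq 0$ such that $\Sigma^{n_S} \Omega^{n_S} S$ is torsionfree in $\stmodsf \Lambda$. Theorem \ref{theoremtwo} applied to $M = S$ with $n = n_S$ then immediately yields $\dell S \leq n_S$. Taking supremum over the (finitely many) simples gives the bound
\begin{align*}
\dell \Lambda \;=\; \sup_S \dell S \;\leq\; \sup_S n_S.
\end{align*}

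Next I would observe that the supremum on the right is finite: since $\Lambda$ is Noetherian semiperfect, the set of isomorphism classes of simple right $\Lambda$-modules is in bijection with the finitely many indecomposable summands of $\Lambda/\rad \Lambda$, and so $\sup_S n_S$ is the maximum of a finite collection of nonnegative integers. This gives $\dell \Lambda \leq \sup_S n_S < \infty$ as required.

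There is really no obstacle here beyond correctly invoking Theorem \ref{theoremtwo}; the torsionfreeness criterion is the \emph{statement} of the corollary, and all the content sits upstream in the identification of $\eta_{n+1}$ as the composite $\Omega^n M \xrightarrow{\eta_n} \Omega^n \Sigma^n \Omega^n M \xrightarrow{\simeq} \Omega^{n+1} \Sigma^{n+1} \Omega^n M$ provided by torsionfreeness of $\Sigma^n \Omega^n M$ together with Proposition \ref{towerproperties}(iii) and Lemma \ref{unitcounitlemma}. The only subtlety worth flagging is the semiperfect hypothesis: without it, one would still get $\dell S \leq n_S$ for each simple, but the supremum could be infinite, so finiteness of $\dell \Lambda$ requires the finite classification of simples that semiperfectness supplies.
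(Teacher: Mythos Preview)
Your proposal is correct and matches the paper's approach exactly: the paper simply states that the corollary follows by ``applying Theorem \ref{theoremtwo} to simple modules'', and you have spelled out precisely this argument, including the observation that semiperfectness guarantees only finitely many simples so that $\sup_S n_S < \infty$.
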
 

We can rephrase this criterion in special cases of interest. Recall that an Artin algebra $\Lambda$ is a finite $R$-algebra over a local commutative Artinian ring $R = (R, \m, k)$. We let $D := {\rm Hom}_R(-, E(k))$ be the Matlis duality between $\modsf \Lambda$ and $\modsf \Lambda^{op}$, and $\tau = D{\rm Tr}$ be the Auslander--Reiten translate. Using Auslander--Reiten duality, we can rewrite the obstruction to torsionfreeness of $X \in \modsf \Lambda$ as 
\begin{align*}
	{\rm Ext}^1_{\Lambda^{op}}({\rm Tr}X, \Lambda) &\cong {\rm Ext}^1_\Lambda(D\Lambda, \tau X)	\cong D {\rm \underline{Hom}}_\Lambda(X, D\Lambda). 
\end{align*}
Applying this to $X = \Sigma^n \Omega^n M$ and using the adjunction isomorphism
\begin{align*}
	{\rm \underline{Hom}}_\Lambda(\Sigma^n \Omega^n M,\ \! D \Lambda) \cong {\rm \underline{Hom}}_\Lambda(\Omega^n M,\ \! \Omega^n D\Lambda) 
\end{align*}
we conclude that $\Sigma^n \Omega^n M$ is torsionfree if and only if ${\rm \underline{Hom}}_\Lambda(\Omega^n M, \Omega^n D\Lambda) = 0$. The torsionfreeness criterion can then be recast over Artin algebras as follows: 

\begin{cor}[Torsionfreeness criterion for Artin algebras] Let $\Lambda$ be an Artin algebra. Assume that for each simple module $S$, there is an $n_S \in \N$ such that	${\rm \underline{Hom}}_\Lambda(\Omega^{n_S} S,\ \! \Omega^{n_S} D\Lambda) = 0$. Then $\dell \Lambda \leq \sup_S n_S < \infty$. 
\end{cor}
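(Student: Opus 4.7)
The plan is to view this corollary as an immediate translation of the previous torsionfreeness criterion (Corollary \ref{torsionfreenesscriterion}) via Auslander--Reiten duality, which the paragraph preceding the statement has essentially set up. Concretely, the goal is to show that the hypothesis $\underline{\rm Hom}_\Lambda(\Omega^{n_S} S,\ \! \Omega^{n_S} D\Lambda) = 0$ is equivalent to torsionfreeness of $\Sigma^{n_S}\Omega^{n_S} S$, and then apply Corollary \ref{torsionfreenesscriterion} to conclude $\dell \Lambda \leq \sup_S n_S$.

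First I would identify what needs to be shown. By Corollary \ref{torsionfreenesscriterion}, it suffices to verify that for each simple $S$, the module $\Sigma^{n_S}\Omega^{n_S} S$ is torsionfree, i.e.\ that ${\rm Ext}^1_{\Lambda^{op}}({\rm Tr}(\Sigma^{n_S}\Omega^{n_S} S),\ \! \Lambda) = 0$. Next I would assemble the three natural isomorphisms explained in the paragraph preceding the corollary: for any $X \in \modsf \Lambda$,
\begin{align*}
	{\rm Ext}^1_{\Lambda^{op}}({\rm Tr}X,\ \!\Lambda) \cong {\rm Ext}^1_\Lambda(D\Lambda,\ \! \tau X) \cong D\underline{\rm Hom}_\Lambda(X,\ \! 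D\Lambda),
\end{align*}
which is Auslander--Reiten duality. Specialising to $X = \Sigma^{n_S}\Omega^{n_S} S$ gives
\begin{align*}
	{\rm Ext}^1_{\Lambda^{op}}({\rm Tr}(\Sigma^{n_S}\Omega^{n_S} S),\ \!\Lambda) \cong D\underline{\rm Hom}_\Lambda(\Sigma^{n_S}\Omega^{n_S} S,\ \! D\Lambda).
\end{align*}
Finally, invoking the $(\Sigma^{n_S}, \Omega^{n_S})$-adjunction on $\stmodsf \Lambda$ rewrites the right-hand stable Hom as
\begin{align*}
	\underline{\rm Hom}_\Lambda(\Sigma^{n_S}\Omega^{n_S} S,\ \! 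D\Lambda) \cong \underline{\rm Hom}_\Lambda(\Omega^{n_S} S,\ \! \Omega^{n_S} D\Lambda),
\end{align*}
which vanishes by hypothesis. Matlis duality $D$ is a faithful exact functor, so vanishing of $D(-)$ is equivalent to vanishing of $(-)$, and we conclude that $\Sigma^{n_S}\Omega^{n_S} S$ is torsionfree.

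Applying Corollary \ref{torsionfreenesscriterion} then yields $\dell \Lambda \leq \sup_S n_S$. To see that this supremum is finite, observe that an Artin algebra has only finitely many isomorphism classes of simple modules in $\modsf \Lambda$, so the supremum is over a finite set of natural numbers and is finite. The only step that requires any vigilance is checking that the adjunction isomorphism in $\stmodsf \Lambda$ is compatible with $D\Lambda$ as a module over an Artin algebra; this is automatic since the adjunction $(\Sigma^n, \Omega^n)$ is defined on all of $\stmodsf \Lambda$ and $D\Lambda \in \modsf \Lambda$ by finiteness of $D$ on finitely generated modules. No real obstacle appears: the proof is essentially a bookkeeping exercise combining Corollary \ref{torsionfreenesscriterion} with the standard Auslander--Reiten duality formula and the adjunction.
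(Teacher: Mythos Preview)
Your proposal is correct and follows exactly the same approach as the paper: the paper's proof is the paragraph immediately preceding the corollary, which uses Auslander--Reiten duality to rewrite ${\rm Ext}^1_{\Lambda^{op}}({\rm Tr}X, \Lambda) \cong D\underline{\rm Hom}_\Lambda(X, D\Lambda)$, specialises to $X = \Sigma^n\Omega^n S$, applies the $(\Sigma^n, \Omega^n)$-adjunction, and then invokes Corollary~\ref{torsionfreenesscriterion}. Your write-up reproduces these steps faithfully.
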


\subsection{The delooping level of standard classes of algebras}\label{subsectiondeloopinglevel} 

We now study the invariant $\dell \Lambda$ for well-understood classes of algebras. To start, the following characterisation of the finitistic dimension zero case is essentially due to Bass (compare with the dual statement of Theorem 6.3 (4) in \cite{Bass60}). 
\begin{prop}[Bass's Theorem]\label{basstheorem} Let $\Lambda$ be Noetherian semiperfect. The following are equivalent: 
	\begin{enumerate}[i)] 
	\item $\depth \Lambda = 0$. 
	\item $\findim \Lambda^{op} = 0$.
	\item $\dell \Lambda = 0$.
\end{enumerate}
If $\Lambda$ is Artinian, this is also equivalent to: 
	\begin{enumerate}[i)]
		\setcounter{enumi}{3}
	\item $\Findim \Lambda^{op} = 0$.
\end{enumerate}
\end{prop}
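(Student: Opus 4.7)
The plan is to close the implication loop using the inequalities already established in Proposition \ref{inequalities}. The chain $\depth \Lambda \leq \findim \Lambda^{op} \leq \dell \Lambda$ immediately yields iii) $\Rightarrow$ ii) $\Rightarrow$ i), and in the Artinian case the extended chain $\depth \Lambda \leq \findim \Lambda^{op} \leq \Findim \Lambda^{op} \leq \dell \Lambda$ likewise handles iii) $\Rightarrow$ iv) $\Rightarrow$ ii) $\Rightarrow$ i). So the only implication requiring a separate argument is i) $\Rightarrow$ iii).

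Suppose $\depth \Lambda = 0$. Unwinding the definition, $\grade S = 0$ for every simple $S \in \modsf \Lambda$, i.e., ${\rm Hom}_\Lambda(S, \Lambda) \neq 0$ for every simple right module $S$. Since $S$ is simple, any such nonzero homomorphism is automatically injective, so $S$ embeds into the socle of $\Lambda_\Lambda$. The resulting short exact sequence $0 \to S \to \Lambda \to \Lambda/S \to 0$ exhibits $S$ as the kernel of an epimorphism from a finitely generated projective onto $\Lambda/S \in \modsf \Lambda$; under the syzygy convention adopted in this paper (any epi from a projective computes $\Omega$) this gives $S \simeq \Omega(\Lambda/S)$ in $\stmodsf \Lambda$. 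In particular $S$ is (trivially) a stable retract of $\Omega(\Lambda/S)$, so $\dell S = 0$. Taking the supremum over the finitely many isomorphism classes of simples yields $\dell \Lambda = 0$, establishing i) $\Rightarrow$ iii).

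No step here is a real obstacle: the inequalities of Proposition \ref{inequalities} do all the work for three of the four implications, and the remaining one rests on the elementary observation that a simple submodule of $\Lambda_\Lambda$ is automatically a first syzygy. The semiperfect hypothesis enters only through the finiteness of the set of simples (so that the supremum defining $\dell \Lambda$ is controlled by the individual $\dell S = 0$) and through the fact that simples form a test class for the $\Hom$-vanishing, which was already invoked in Proposition \ref{inequalities}.
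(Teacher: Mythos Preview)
Your proof is correct and follows essentially the same route as the paper: both reduce to the implication i) $\Rightarrow$ iii) via the inequalities of Proposition~\ref{inequalities}, and both argue that $\depth \Lambda = 0$ forces $S^* \neq 0$ for every simple $S$, so that $S \hookrightarrow \Lambda_\Lambda$ is a first syzygy and $\dell S = 0$. Your version is a bit more explicit about the short exact sequence realising $S \simeq \Omega(\Lambda/S)$, but the argument is the same; one small inaccuracy is your closing remark that semiperfectness is needed for the supremum---if every $\dell S = 0$ then $\sup_S \dell S = 0$ regardless of how many simples there are.
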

\begin{proof} We have $\depth \Lambda \leq \findim \Lambda^{op} \leq \dell \Lambda$, and $\depth \Lambda \leq \Findim \Lambda^{op} \leq \dell \Lambda$ in the Artinian case. It's enough to show that $\depth \Lambda = 0$ implies $\dell \Lambda = 0$. When $\depth \Lambda = 0$, we have $S^* \neq 0$ for every simple $S$ and so $S \hookrightarrow \Lambda_\Lambda$, which shows that $\dell \Lambda = \sup_S \dell S = 0$ as we wanted. 
\end{proof}

\begin{exmp} Let $\Lambda$ be a local Artinian ring. Then $\dell \Lambda = 0$. 
\end{exmp} 

Another situation where everything is well-understood is the Gorenstein case, due to results of Angeleri-H\"ugel--Herbera--Trlifaj. 
\begin{exmp}[Gorenstein rings]\label{fullgorensteinexample} Let $\Lambda$ be Gorenstein. Then we have equalities 
	\begin{align*}
		\findim \Lambda^{op} = \findim \Lambda = \Findim \Lambda^{op} = \Findim \Lambda = \dell \Lambda^{op} = \dell \Lambda = \idim \Lambda_\Lambda.
	\end{align*} 

	Indeed $\Sigma^n\Omega^n S$ is Gorenstein-projective for every $n \geq \idim \Lambda_\Lambda$ and $S$ simple and so $\dell \Lambda \leq \idim \Lambda_\Lambda$ by the torsionfreeness criterion. Proposition \ref{inequalities} then gives $\findim \Lambda^{op} \leq \dell \Lambda \leq \idim \Lambda_\Lambda$. By \cite[Thm. 3.2]{AHHT06} we have $\findim \Lambda^{op} = \Findim \Lambda^{op} = \idim \Lambda^{op}_{\Lambda^{op}} = \idim \Lambda_\Lambda$, and so $\findim \Lambda^{op} = \dell \Lambda = \idim \Lambda_\Lambda$. The remaining equalities follow by symmetry. 
\end{exmp}
The depth of Gorenstein rings is more subtle and will be considered at the end of Section \ref{subsectionmaintheorem}.

Next, we consider two related invariants which have been used to bound the finitistic dimension, namely the notion of ``strongly redundant image'' of Fuller--Wang \cite{FW93} and the repetition index of Goodearl--Huisgen-Zimmermann \cite{GHZ98}. Both notions measure the amount of similarity in indecomposable summands of syzygies of $M$. 

\begin{defn}[Fuller--Wang \cite{FW93}] A module $M \in \modsf \Lambda$ has strongly redundant image from an integer $n$ if $\Omega^n M = \bigoplus_{i \in I} M_i$ where each $M_i$ occurs as a summand of $\Omega^{n+t_i} M$ for some $t_i > 0$. 
\end{defn}

\begin{defn}[Huisgen--Zimmermann \cite{GHZ98}] A module $M \in \modsf \Lambda$ has repetition index at most $n$ if $\Omega^n M = \bigoplus_{i \in I} M_i$ where each $M_i$ occurs as a summand of $\Omega^{n+t_i} M$ for infinitely many $t_i > 0$. 
\end{defn} 
Note that we consider syzygy modules defined only up to direct sum with projectives, and so projective summands of $\Omega^n M$ are considered to repeat in higher syzygies as needed.  

To put terminology on the same footing, we let the redundancy index ${\rm red}\ \! M$ be the least $n$ from which $M$ has strongly redundant image. Likewise we write ${\rm rep}\ \! M$ for the repetition index. We immediately have ${\rm red}\ \! M \leq {\rm rep}\ \! M$. It is easy to see that both invariants bound the delooping level. 

\begin{prop} Let $\Lambda$ be Noetherian semiperfect. Then every $M \in \modsf \Lambda$ satisfies $\dell M \leq {\rm red}\ \! M \leq {\rm rep}\ \! M$. 
\end{prop}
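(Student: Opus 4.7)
The inequality $\mathrm{red}\ \! M \leq \mathrm{rep}\ \! M$ is immediate from the definitions: the repetition condition requires each summand $M_i$ of $\Omega^n M$ to reappear in $\Omega^{n+t_i} M$ for infinitely many positive $t_i$, which in particular forces the existence of at least one such $t_i > 0$, hence the redundancy condition at the same level $n$.

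For the substantive inequality $\dell M \leq \mathrm{red}\ \! M$, set $n := \mathrm{red}\ \! M$ and decompose
\begin{align*}
\Omega^n M \;=\; \bigoplus_{i \in I} M_i
\end{align*}
where each $M_i$ is a summand of $\Omega^{n+t_i} M$ for some $t_i > 0$. Since $\Lambda$ is Noetherian semiperfect, the Krull--Schmidt property holds on $\modsf \Lambda$, so the index set $I$ is finite. The key observation is that $t_i \geq 1$ allows us to factor out one layer of syzygy: writing $t_i - 1 \geq 0$, we have $\Omega^{n+t_i} M \simeq \Omega^{n+1}(\Omega^{t_i - 1} M)$ in $\stmodsf \Lambda$. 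Now define
\begin{align*}
N \;:=\; \bigoplus_{i \in I} \Omega^{t_i - 1} M \;\in\; \stmodsf \Lambda.
\end{align*}
Then $\Omega^{n+1} N \simeq \bigoplus_{i} \Omega^{n+t_i} M$, and by hypothesis each $M_i$ is a summand of the $i$-th factor. Taking the sum of these summands exhibits $\Omega^n M = \bigoplus_i M_i$ as a stable retract of $\Omega^{n+1} N$, which by definition gives $\dell M \leq n = \mathrm{red}\ \! M$.

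The argument is essentially bookkeeping once one sees the trick of peeling off $\Omega^{n+1}$ from each $\Omega^{n+t_i} M$; the only point requiring care is that $I$ be finite so that $N$ lies in $\stmodsf \Lambda$, which is guaranteed by Krull--Schmidt in the Noetherian semiperfect setting. No part of the argument looks obstructed, so I expect a short, clean proof along these lines.
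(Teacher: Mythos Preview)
Your proof is correct and follows essentially the same line as the paper's own argument: decompose $\Omega^n M$ into finitely many summands, use $t_i \geq 1$ to peel off one syzygy layer, and set $N = \bigoplus_i \Omega^{t_i-1} M$ so that $\Omega^n M$ is a stable retract of $\Omega^{n+1} N$. The only cosmetic difference is that the paper invokes the Noetherian hypothesis directly to obtain a finite indecomposable decomposition, whereas you phrase the finiteness of $I$ via Krull--Schmidt; both amount to the same point.
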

\begin{proof}
	Assume $n = {\rm red}\ \! M < \infty$. Since $\Lambda$ is Noetherian, $\Omega^n M = M_1 \oplus \dots \oplus M_k$ has a complete decomposition into indecomposables, and each $M_i$ is a summand of $\Omega^{n+t_i}$ for some $t_i > 0$ by assumption. We then see that $\Omega^n M$ is a summand of 
	\begin{align*}
		\Omega^{n+1}(\Omega^{t_1 - 1} M \oplus \dots \oplus \Omega^{t_k - 1} M) 
	\end{align*}
	and so $\dell M \leq n$. 
\end{proof}

Natural examples are as follows. We say that an Artinian ring $\Lambda$ is $n$-syzygy finite for some $n \geq 1$ whenever the full subcategory of summands of $n$-th syzygies ${\sf add}\left(\Omega^n(\stmodsf \Lambda)\right) \subseteq \stmodsf \Lambda$ has finitely many indecomposable objects. In this case the repetition index of every $M \in \stmodsf \Lambda$ is finite, as the sequence 
\begin{align*}
	\bigcup_{i \geq n} {\sf add}(\Omega^i M) \supseteq 
	\bigcup_{i \geq n+1} {\sf add}(\Omega^i M) \supseteq 
	\bigcup_{i \geq n+2} {\sf add}(\Omega^i M) \supseteq \dots
\end{align*} 
must eventually stabilise and $M$ has repetitive syzygy summands from then on. 
\begin{exmp} Let $\Lambda$ be Artinian and $n$-syzygy finite. Then $\dell \Lambda < \infty$. 
\end{exmp}

Artinian monomial path algebras over a field are $2$-syzygy finite by \cite{HZ91}.  
\begin{exmp}Let $\Lambda$ be an Artinian monomial path algebra. Then $\dell \Lambda < \infty$. 
\end{exmp}

When $\Lambda$ is a radical square zero Artinian ring (i.e. satisfies $J^2 = 0$), it is easy to see that kernels of projective covers are semisimple and so $\Lambda$ is $1$-syzygy finite. 

\begin{exmp}Let $\Lambda$ be a radical square zero Artinian ring. Then $\dell \Lambda < \infty$. 
\end{exmp}

\section{The Auslander--Bridger grade conditions}\label{sectionAuslanderBridgerconditions} We now investigate when the inequalities 
\begin{align*}
	\depth \Lambda \leq \findim \Lambda^{op} \leq \dell \Lambda	 
\end{align*}
become equalities. We will give sufficient criterion in terms of a grade condition introduced by Auslander--Bridger. 
Recall that we defined $\grade M := \inf {\rm Ext}^*_\Lambda(M, \Lambda)$. 

\begin{defn} A module $M \in \modsf \Lambda$ satisfies the $n$-th Auslander--Bridger condition (or $n$-th grade condition) if $\grade {\rm Ext}^i_\Lambda(M, \Lambda) \geq i$ for each $1 \leq i \leq n$. 
\end{defn}
To put these conditions in context, we mention a theorem of Bass and Auslander--Bridger in commutative algebra. 

\begin{exmp}[{\cite[Prop. 4.21]{AB69}}, \cite{Bass63} for $n = \infty$]\label{AuslanderBridgerproposition} Let $R$ be a commutative Noetherian ring. The following are equivalent: 
	\begin{enumerate}[i)]
	\item $\grade {\rm Ext}^i_R(M, R) \geq i$ for all $M \in \modsf R$ and $1 \leq i \leq n$.  
	\item $R_{\mathfrak{p}}$ is Gorenstein for each prime $\mathfrak{p}$ with $\depth R_{\mathfrak{p}} < n$. 
\end{enumerate} 
\end{exmp}

In the noncommutative case, we also have a result of Auslander--Reiten.  
\begin{exmp}[{\cite[Thm. 0.1]{AR96}}]\label{auslanderreitentheorem} Let $\Lambda$ be a Noetherian ring. The following are equivalent: 
	\begin{enumerate}[a)] 
	\item $\grade {\rm Ext}^i_\Lambda(M, \Lambda) \geq i$ for all $M \in \modsf \Lambda$ and $1 \leq i \leq n$. 
	\item The terms of the minimal injective resolution of $\Lambda^{op}$ 
		\begin{align*}
			0 \to \Lambda^{op}_{\Lambda^{op}} \to I^0 \to I^1 \to \cdots \to I^{i-1} \to I^i \to I^{i+1} \to \cdots	
		\end{align*}
		satisfy $\fdim I^i \leq i+1$ for all $0 \leq i \leq n-1$. 
\end{enumerate}
\end{exmp}

The theorems of Bass, Auslander--Bridger and Auslander--Reiten show that imposing the grade conditions on all modules $M \in \modsf \Lambda$, up to fixed $n \in \N \cup \{ \infty \}$, can be a significant constraint on a ring. 

We can instead ask whether a given module $M$ satisfies the $n$-th grade conditions for some $n \in \N$. Of these conditions, the first non-vacuous one arises for $n = \grade M$. Let us write $j_M := \grade M$ for short. 
\begin{defn} We say that a module $M \in \modsf \Lambda$ satisfies the first non-trivial grade condition if $\grade {\rm Ext}^{j_M}_\Lambda(M, \Lambda) \geq j_M$. Equivalently, 
	\begin{align*}
		{\rm Ext}^j_{\Lambda^{op}}({\rm Ext}^{j_M}_\Lambda(M, \Lambda), \Lambda) = 0 \textup{ for } j < j_M.	
	\end{align*} 
\end{defn} 

A natural condition one might ask of a ring is whether its simple modules satisfy the first non-trivial grade condition. The next example suggests this is a significantly weaker constraint, i.e. compare the following with Example \ref{AuslanderBridgerproposition}. 

\begin{exmp}\label{commutativefirstnontrivialcondition} Let $R$ be a commutative Noetherian ring and let $S = R/\m$ be a simple module, with $\m$ the corresponding maximal ideal. The module ${\rm Ext}^{j_S}_R(S, R)$ is a finite length module supported on $\{ \m \}$, and ${\rm Ext}^j_R(-, R)$ vanishes on such modules for $j < j_{S}$. Therefore $S$ always satisfies the first non-trivial grade condition. 
\end{exmp} 

Our motivation for introducing the grade conditions is the spherical filtration theorem of Auslander--Bridger; it turns out that a lot of useful things can be said about $\Sigma^n \Omega^n M$ when $M$ satisfies the $n$-th grade condition. We assume $\Lambda$ is Noetherian semiperfect in what follows, and begin with a definition. 

\begin{defn}[Spherical module] Let $E \neq 0$ and $n \geq 1$. A module $K \in \modsf \Lambda$ is called $n$-spherical of type $E$ if: 
\begin{enumerate}[(a)] 
\item $\pdim K \leq n$, 
\item ${\rm Ext}^i_\Lambda(K, \Lambda) = 0$ for $1 \leq i \leq n-1$, 
\item ${\rm Ext}^n_\Lambda(K, \Lambda) \cong E$. 
\end{enumerate}
If $E = 0$, we say that $K$ is $n$-spherical of type $E$ if it is projective. 
\end{defn}

	\begin{thm}[Spherical Filtration Theorem {\cite[Theorem 2.37]{AB69}}] Assume that $M \in \modsf \Lambda$ satisfy the $n$-th grade condition. Then there is a projective module $P$ and a filtration 
		\begin{align*}
			M_n \subseteq M_{n-1} \subseteq \dots \subseteq M_1 \subseteq M_0 = M \oplus P	 
		\end{align*}
		such that $M_k \simeq \Sigma^k \Omega^k M$ and $M_{k-1}/M_k$ is $k$-spherical of type ${\rm Ext}^k_\Lambda(M, \Lambda)$ for each $1 \leq k \leq n$. Moreover, the dual map $(M \oplus P)^* \twoheadrightarrow M_k^*$ is onto for each $1 \leq k \leq n$. 
	\end{thm}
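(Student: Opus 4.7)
\emph{Plan.} I would prove the theorem by induction on $n$. The base case $n = 0$ is trivial: take $P = 0$ and $M_0 = M$. For the inductive step, assuming the filtration $M_{n-1} \subseteq \dots \subseteq M_0 = M \oplus P'$ with the stated properties has been built, the goal is to produce $M_n \hookrightarrow M_{n-1}$, after possibly enlarging the projective summand to some $P \supseteq P'$, as the kernel of a short exact sequence
\begin{align*}
0 \to M_n \to M_{n-1} \oplus Q \to E_n \to 0
\end{align*}
with $M_n \simeq \Sigma^n \Omega^n M$ stably and $E_n$ an $n$-spherical module of type $\mathrm{Ext}^n_\Lambda(M, \Lambda)$.

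The module $E_n$ is where the $n$-th grade condition enters decisively. Let $X := \mathrm{Ext}^n_\Lambda(M, \Lambda)$; the hypothesis $\grade X \geq n$ says that if $Q_\bullet \twoheadrightarrow X$ is a projective resolution in $\modsf \Lambda^{op}$, the dual complex $0 \to Q_0^* \to Q_1^* \to \cdots$ is exact at positions $0$ through $n-1$. Defining $E_n := \mathrm{coker}(Q_{n-1}^* \to Q_n^*)$ then yields a projective resolution $0 \to Q_0^* \to Q_1^* \to \cdots \to Q_n^* \to E_n \to 0$ of length $n$, and a direct cohomology computation using reflexivity of finite projectives gives $\mathrm{Ext}^i(E_n, \Lambda) = 0$ for $1 \leq i \leq n-1$ and $\mathrm{Ext}^n(E_n, \Lambda) \cong X$; that is, $E_n$ is $n$-spherical of the prescribed type. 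The surjection $M_{n-1} \oplus Q \twoheadrightarrow E_n$ is then built by lifting the natural stable morphism $\Sigma^n \Omega^n M \to \Sigma^{n-1} \Omega^{n-1} M$ from the approximation tower to an honest module-level map, and its kernel is identified with $\Sigma^n \Omega^n M$ using $\Sigma^k \simeq \mathrm{Tr}\, \Omega^k\, \mathrm{Tr}$ to translate between module and stable descriptions.

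The surjectivity $(M \oplus P)^* \twoheadrightarrow M_n^*$ follows inductively from the six-term exact sequence
\begin{align*}
0 \to E_n^* \to M_{n-1}^* \oplus Q^* \to M_n^* \to \mathrm{Ext}^1(E_n, \Lambda) \to \mathrm{Ext}^1(M_{n-1}, \Lambda),
\end{align*}
provided the final connecting homomorphism is injective — this is automatic for $n \geq 2$ since $\mathrm{Ext}^1(E_n, \Lambda) = 0$ by the spherical property, and for $n = 1$ it is enforced by the fact that the isomorphism $\mathrm{Ext}^1(E_1, \Lambda) \cong \mathrm{Ext}^1(M, \Lambda)$ is realised precisely by this connecting map. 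Combined with the inductive surjectivity $(M \oplus P')^* \twoheadrightarrow M_{n-1}^*$, one then absorbs $Q^*$ into the enlarged projective $P$ to conclude.

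The principal obstacle is lifting the whole construction from the stable category to $\modsf \Lambda$. The objects $\Sigma^k \Omega^k M$ and $E_n$ are each defined only up to direct sum with projectives, and both the tower morphism and the cokernel map must be realised by carefully chosen representatives that glue into an honest short exact sequence sitting inside $M \oplus P$. Tracking these projective adjustments across successive inductive steps, while simultaneously exploiting all of the grade conditions $\grade \mathrm{Ext}^i_\Lambda(M, \Lambda) \geq i$ for $1 \leq i \leq n$ to kill obstructions, is the technically delicate heart of the proof.
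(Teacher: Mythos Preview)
The paper does not prove this theorem; its entire proof reads ``This is \cite[Theorem 2.37]{AB69}. The claim about the dual map is shown in the proof, and relies on \cite[Corollary 2.32]{AB69}.'' So there is no argument in the paper to compare your proposal against --- the result is imported wholesale from Auslander--Bridger.

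Your sketch is a reasonable outline of how the Auslander--Bridger argument actually goes: induction on $n$, with the $n$-th grade condition used to build an $n$-spherical module $E_n$ of the correct type by dualising a projective resolution of $\mathrm{Ext}^n_\Lambda(M,\Lambda)$, and the surjectivity of the dual map handled via the long exact sequence and the vanishing $\mathrm{Ext}^1(E_n,\Lambda)=0$ for $n\geq 2$. One expository wrinkle: you describe the surjection $M_{n-1}\oplus Q \twoheadrightarrow E_n$ as being ``built by lifting the natural stable morphism $\Sigma^n\Omega^n M \to \Sigma^{n-1}\Omega^{n-1} M$'', but that tower morphism is (a representative of) the \emph{inclusion} $M_n \hookrightarrow M_{n-1}$, not the quotient map. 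What one actually does is realise the counit $\varepsilon_1$ at the level of $\Sigma^{n-1}\Omega^{n-1}M$ and take its cokernel, or equivalently construct the map to $E_n$ directly from the resolution data and then identify the kernel. Your final paragraph correctly flags that the genuine work lies in controlling the projective summands so that the stable-category statements lift to an honest filtration in $\modsf \Lambda$; this bookkeeping is indeed the content of \cite[Cor.~2.32, Thm.~2.37]{AB69}.
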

	\begin{proof}
		This is \cite[Theorem 2.37]{AB69}. The claim about the dual map is shown in the proof, and relies on \cite[Corollary 2.32]{AB69}. 
	\end{proof}

	We will extract a special case in suitable form. We need a lemma.  
	\begin{lem}[{\cite[Lemma 2.34 c)]{AB69}}]\label{vanishinglemma} Let $M$ satisfy the $n$-th grade condition. Then we have ${\rm Ext}^i_\Lambda(\Sigma^k \Omega^k M, \Lambda) = 0$ for $1 \leq i \leq k$ and all $1 \leq k \leq n$.  
	\end{lem}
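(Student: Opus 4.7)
The plan is to apply the Auslander--Bridger spherical filtration theorem stated just above and argue by induction on $k$. The filtration provides stable equivalences $M_k \simeq \Sigma^k \Omega^k M$ sitting inside short exact sequences
\[
0 \to M_k \to M_{k-1} \to K_k \to 0,
\]
where $K_k := M_{k-1}/M_k$ is $k$-spherical of type ${\rm Ext}^k_\Lambda(M, \Lambda)$, and the dual maps $(M \oplus P)^* \twoheadrightarrow M_k^*$ are surjective. The base case $k = 0$ is vacuous.

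For the inductive step, assume ${\rm Ext}^i_\Lambda(M_{k-1}, \Lambda) = 0$ for $1 \leq i \leq k-1$ and apply ${\rm Hom}_\Lambda(-, \Lambda)$ to the above short exact sequence. The $k$-spherical hypothesis gives ${\rm Ext}^i_\Lambda(K_k, \Lambda) = 0$ for $i \neq k$ in positive degrees, together with ${\rm Ext}^k_\Lambda(K_k, \Lambda) \cong {\rm Ext}^k_\Lambda(M, \Lambda)$. For $1 \leq i \leq k-2$, the long exact sequence immediately collapses to ${\rm Ext}^i_\Lambda(M_k, \Lambda) \cong {\rm Ext}^i_\Lambda(M_{k-1}, \Lambda) = 0$. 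The two critical degrees $i = k-1$ and $i = k$ fit into the four-term exact sequence
\[
0 \to {\rm Ext}^{k-1}_\Lambda(M_k, \Lambda) \to {\rm Ext}^k_\Lambda(M, \Lambda) \xrightarrow{\partial} {\rm Ext}^k_\Lambda(M_{k-1}, \Lambda) \to {\rm Ext}^k_\Lambda(M_k, \Lambda) \to 0,
\]
so both desired vanishings reduce to showing that the connecting map $\partial$ is an isomorphism.

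The main obstacle is precisely this last identification of $\partial$; it is not formal and relies on the explicit construction of the pieces $K_k$ in the proof of Theorem 2.37 of \cite{AB69}, which are built from a projective presentation of ${\rm Ext}^k_\Lambda(M,\Lambda)$ over $\Lambda^{op}$ so that at stage $k$ the newly appearing Ext class in ${\rm Ext}^k_\Lambda(M_{k-1},\Lambda)$ is exactly accounted for by the spherical Ext of $K_k$. The surjectivity of $(M \oplus P)^* \twoheadrightarrow M_k^*$, and hence of $M_{k-1}^* \twoheadrightarrow M_k^*$, provides the input needed to rule out spurious torsion in low degrees. Rather than reproduce the Auslander--Bridger construction in detail, the cleanest route is to cite Lemma 2.34(c) of \cite{AB69} directly, which is the approach the author takes.
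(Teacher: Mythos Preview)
The paper gives no proof of this lemma; it simply cites \cite[Lemma 2.34 c)]{AB69}, and you correctly arrive at the same conclusion in your final sentence. Your sketch via the Spherical Filtration Theorem is reasonable and the bookkeeping is accurate: the long exact sequence does reduce everything to showing that the connecting map $\partial: {\rm Ext}^k_\Lambda(K_k,\Lambda) \to {\rm Ext}^k_\Lambda(M_{k-1},\Lambda)$ is an isomorphism, and you are right that this is not formal and depends on the explicit construction of the $K_k$ in \cite{AB69}. One caveat worth flagging: in the numbering of \cite{AB69}, Lemma 2.34 precedes Theorem 2.37, so the original proof of the vanishing does not pass through the Spherical Filtration Theorem but rather goes directly from the inductive construction of the approximations $\Sigma^k\Omega^k M$. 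Your route therefore inverts the logical order of \cite{AB69} and, depending on whether the proof of Theorem 2.37 itself invokes Lemma 2.34, may be circular; in any case it does not shortcut the need to open up the Auslander--Bridger construction, which is why deferring to the citation is indeed the cleanest option here.
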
 

	Next, recall that a morphism of modules is essential if it is non-zero in the stable category. From the spherical filtration theorem, we obtain: 
	\begin{cor}\label{sphericalfiltrationcorollary} Let $S$ be a simple module of grade $j_S$ with $1 \leq j_S < \infty$, and assume that $S$ satisfies the $j_S$-th grade condition. Then there is a short exact sequence of the form 
		\begin{align*}
			0 \to \Sigma^{j_S} \Omega^{j_S} S \to S \oplus Q \xrightarrow{\alpha} K \to 0	 
		\end{align*} 
		for $Q$ some projective module and $K$ a $j_S$-spherical module of type $E := {\rm Ext}^{j_S}_\Lambda(S, \Lambda)$. Moreover: 
		\begin{enumerate}[i)] 
		\item The dual map $(S \oplus Q)^* \twoheadrightarrow (\Sigma^{j_S} \Omega^{j_S} S)^*$ is onto. 
		\item The map $\alpha$ is essential.
	\end{enumerate}
	\end{cor}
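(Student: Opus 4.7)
My plan is to specialize the Spherical Filtration Theorem to $M = S$ with $n = j_S$, then collapse the projective layers of the resulting filtration to extract the desired short exact sequence.

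Concretely, the theorem supplies a filtration $M_{j_S} \subseteq \cdots \subseteq M_0 = S \oplus P$ with each $M_{k-1}/M_k$ being $k$-spherical of type $\mathrm{Ext}^k_\Lambda(S, \Lambda)$. Since $j_S = \grade S$, these types vanish for $1 \leq k \leq j_S - 1$, so the corresponding factors are projective and the intermediate short exact sequences split, yielding an identification $M_0 \cong M_{j_S - 1} \oplus P''$ with $P''$ projective. The bottom step $0 \to M_{j_S} \to M_{j_S - 1} \to K \to 0$, with $K$ being $j_S$-spherical of type $E = \mathrm{Ext}^{j_S}_\Lambda(S, \Lambda)$, then augments with $P''$ on its left and middle terms to produce
\[
0 \to M_{j_S} \oplus P'' \to S \oplus P \xrightarrow{\alpha} K \to 0.
\]
Setting $Q := P$ and representing $\Sigma^{j_S} \Omega^{j_S} S$ by $M_{j_S} \oplus P''$ gives the desired form. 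Part (i) then falls out of the ``Moreover'' clause of the Spherical Filtration Theorem: the surjection $(S \oplus P)^* \twoheadrightarrow M_{j_S}^*$ supplied there combines with the fact that $P''$ is a direct summand of $S \oplus P$ to yield the required surjection onto $(M_{j_S} \oplus P'')^*$.

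For part (ii), I plan to argue by contradiction. Using $j_S \geq 1$, and hence $S^* = 0$, no nonzero morphism from $S$ lands in a projective; so if $\alpha$ factored through a projective then $\alpha|_S$ would necessarily vanish, and a brief bookkeeping step (invoking $S^* = 0$ again to discard the $P''$-component) would place $S$ as a submodule of $M_{j_S}$. I would then derive a contradiction from $S \subseteq M_{j_S}$ by computing Ext: combining the short exact sequence $0 \to S \to M_{j_S} \to M_{j_S}/S \to 0$ with the Ext-vanishing of Lemma \ref{vanishinglemma} gives $\mathrm{Ext}^i_\Lambda(M_{j_S}/S, \Lambda) = 0$ for $1 \leq i \leq j_S$, and the short exact sequence $0 \to M_{j_S}/S \to M_{j_S - 1}/S \to K \to 0$, whose middle term is projective as a summand of $M_0/S \cong P$, propagates this vanishing one step up the long exact sequence to force $\mathrm{Ext}^{j_S}_\Lambda(K, \Lambda) = 0$, contradicting $E \neq 0$.

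I expect the main obstacle to be the boundary case $j_S = 1$, where the propagation argument degenerates. My backup plan there is to observe that $M_1/S$ is the first syzygy of $K$ via the sequence $0 \to M_1/S \to P \to K \to 0$, hence projective since $\pdim K = 1$; this splits $0 \to S \to M_1 \to M_1/S \to 0$ and gives $\Sigma \Omega S \simeq S$ stably. Applying the transpose and the identity $\mathrm{Tr}\,\Sigma \simeq \Omega\,\mathrm{Tr}$ yields $\Omega \Sigma\,\mathrm{Tr}\, S \simeq \mathrm{Tr}\, S$ stably, so $\mathrm{Tr}\, S$ is torsionfree by Proposition \ref{towerproperties} iii); this forces $\mathrm{Ext}^1_\Lambda(S, \Lambda) = 0$, again contradicting $E \neq 0$.
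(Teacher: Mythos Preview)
Your construction of the short exact sequence and your argument for (i) are essentially the same as the paper's (in fact slightly more direct: since each intermediate quotient $M_{k-1}/M_k$ for $k < j_S$ is projective, the inclusions split on the nose and one gets $M_0 \cong M_{j_S-1} \oplus P''$ without the extra projective $P'$ the paper introduces). Your surjectivity argument for (i) is fine once one notes that the dual of $M_{j_S} \hookrightarrow M_0$ being onto forces the dual of $M_{j_S} \hookrightarrow M_{j_S-1}$ to be onto as well, so that $(\iota \oplus {\rm id}_{P''})^* = \iota^* \oplus {\rm id}$ is surjective.

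Your proof of (ii) is correct but takes a genuinely different route from the paper. The paper argues directly: applying ${\rm Hom}_\Lambda(-,\Lambda)$ to the short exact sequence, using (i) for exactness at the Hom level and Lemma \ref{vanishinglemma} for the vanishing ${\rm Ext}^i_\Lambda(\Sigma^{j_S}\Omega^{j_S} S, \Lambda) = 0$ for $1 \le i \le j_S$, the long exact sequence yields an isomorphism
\[
\alpha^* \colon {\rm Ext}^{j_S}_\Lambda(K,\Lambda) \xrightarrow{\ \cong\ } {\rm Ext}^{j_S}_\Lambda(S \oplus Q,\Lambda) = {\rm Ext}^{j_S}_\Lambda(S,\Lambda) \neq 0,
\]
and since $j_S \ge 1$ this immediately shows $\alpha$ is essential. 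This is shorter, uniform in $j_S$, and yields the extra information that $\alpha^*$ is an isomorphism in degree $j_S$. Your contradiction argument --- reducing to $S \subseteq M_{j_S}$ via $S^* = 0$, then squeezing ${\rm Ext}^{j_S}_\Lambda(K,\Lambda)$ to zero through the quotient filtration, with a separate torsionfreeness argument for $j_S = 1$ --- also works, but is more circuitous and requires the case split. Both rely on the same Ext-vanishing input (Lemma \ref{vanishinglemma}); the paper simply feeds it into the obvious long exact sequence rather than into an indirect contradiction.
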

	\begin{proof} Applying the spherical filtration theorem for $n = j_S$, we find a projective $P$ and a filtration
		\begin{align*} 
			S_{j_S} \subseteq S_{j_S-1} \subseteq \dots \subseteq S_1 \subseteq S_0 = S \oplus P
		\end{align*} 
		with $S_{j_S} \simeq \Sigma^{j_S} \Omega^{j_S} S$ and $S_{j-1}/S_j$ a $j$-spherical module of type ${\rm Ext}^j_\Lambda(S, \Lambda)$ for each $1 \leq j \leq j_S$. For each $j < j_S$ we have ${\rm Ext}^{j}_\Lambda(S, \Lambda) = 0$ by assumption and so $S_{j-1}/S_j$ is projective. All but the last inclusions then split, and give stable equivalences 
		\begin{align*}
			S_{j_S-1} \simeq S_{j_S - 2} \simeq \dots \simeq S_1 \simeq S_0 = S \oplus P.
		\end{align*} 
		In particular, there exists projective modules $P', P''$ such that 
		\begin{align*}
		S_{j_S-1} \oplus P'' \cong S_0 \oplus P' = S \oplus Q
		\end{align*} 
		where we set $Q := P \oplus P'$. Consider the monomorphism 
		\begin{align*}
			\iota \oplus {\rm id}: S_{j_S} \oplus P'' \hookrightarrow S_{j_S-1} \oplus P'' \cong S \oplus Q.
		\end{align*}
		This gives rise to a short exact sequence 
		\begin{align*}
			0 \to S_{j_S} \oplus P'' 
			\to S \oplus Q \xrightarrow{\alpha} K \to 0
		\end{align*}
		where $K$ is given by 
\begin{align*}
K &\cong (S \oplus Q)/ (S_{j_S} \oplus P'')\\
&\cong (S_{j_S - 1} \oplus P'') / (S_{j_S} \oplus P'') \\
&\cong S_{j_S - 1} / S_{j_S} 
\end{align*}
which is a $j_S$-spherical module of type ${\rm Ext}^{j_S}_\Lambda(S, \Lambda)$. Taking $\Sigma^{j_S} \Omega^{j_S} S := S_{j_S} \oplus P''$ as stable representative, we obtain the short exact sequence of the main claim. 

		The claim i) for $Q$ follows immediately from the analogous claim for $P$ in the spherical filtration theorem. For ii), to show that $\alpha$ is essential, note that the dual sequence is exact by i) and that ${\rm Ext}^i_\Lambda(\Sigma^{j_S} \Omega^{j_S} S, \Lambda) = 0$ for $1 \leq i \leq j_S$ by Lemma \ref{vanishinglemma}. The long exact sequence then gives the isomorphism 
		\begin{align*}
			\alpha^*: {\rm Ext}^{j_S}_\Lambda(K, \Lambda) \xrightarrow{\cong} {\rm Ext}^{j_S}_\Lambda(S \oplus Q, \Lambda) = {\rm Ext}^{j_S}_\Lambda(S, \Lambda).
		\end{align*}
		Since ${\rm Ext}^{j_S}_\Lambda(S, \Lambda) \neq 0$ by assumption and $j_S \geq 1$ we see that $\alpha$ is essential. 
	\end{proof}

	Finally, the short exact sequence of Corollary \ref{sphericalfiltrationcorollary} 
\begin{align*}
	0 \to \Sigma^{j_S} \Omega^{j_S} S \to S \oplus Q \xrightarrow{\alpha} K \to 0	 
\end{align*}
will allow us to access the structure of $\Sigma^{j_S} \Omega^{j_S} S$, and to deduce when it is torsionfree. 

\subsection{Main theorem}\label{subsectionmaintheorem} 

Before stating the main results, let us first state a lemma which essentially amounts to an aesthetic improvement on our results. We have mentioned that the counit map 
\begin{align*}
\Sigma^n \Omega^n M \to M	
\end{align*}
always becomes a retract after applying $\Omega^n(-)$. A stronger statement is true in the presence of the grade conditions. 

\begin{lem}[{\cite[Lemma 2.34 a)]{AB69}}]\label{aestheticlemma} Let $M$ satisfy the $n$-th grade condition. Then the counit $\Sigma^n \Omega^n M \to M$ becomes a stable equivalence $\Omega^n \Sigma^n \Omega^n M \xrightarrow{\simeq} \Omega^n M$ after applying $\Omega^n(-)$. 
\end{lem}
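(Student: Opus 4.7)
The plan is to derive this as a direct application of the Spherical Filtration Theorem together with a standard syzygy computation. Since $M$ satisfies the $n$-th grade condition, the theorem furnishes a projective $P$ and a filtration
\begin{align*}
M_n \subseteq M_{n-1} \subseteq \cdots \subseteq M_1 \subseteq M_0 = M \oplus P
\end{align*}
with stable representatives $M_k \simeq \Sigma^k \Omega^k M$ and $k$-spherical subquotients $M_{k-1}/M_k$ of type ${\rm Ext}^k_\Lambda(M, \Lambda)$. Built into the construction is the fact that the successive inclusions $M_{k+1} \hookrightarrow M_k$ represent the tower of counit maps $\Sigma^{k+1}\Omega^{k+1}M \to \Sigma^k\Omega^k M$; in particular, by Lemma \ref{unitcounitlemma}, the total inclusion $\iota: M_n \hookrightarrow M \oplus P$ is a stable representative of the counit $\varepsilon_n : \Sigma^n \Omega^n M \to M$.

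Form the short exact sequence
\begin{align*}
0 \to M_n \xrightarrow{\iota} M \oplus P \to Q \to 0
\end{align*}
with $Q := (M \oplus P)/M_n$. Then $Q$ is filtered by the subquotients $M_{k-1}/M_k$ for $1 \leq k \leq n$, each of which is $k$-spherical and therefore satisfies $\pdim(M_{k-1}/M_k) \leq k \leq n$. Induction along the filtration yields $\pdim Q \leq n$.

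Now iterate the Horseshoe Lemma $n$ times on the displayed short exact sequence to obtain, for suitable projective $R$, a short exact sequence of $n$-th syzygies
\begin{align*}
0 \to \Omega^n M_n \xrightarrow{\Omega^n \iota} \Omega^n(M \oplus P) \oplus R \to \Omega^n Q \to 0.
\end{align*}
Since $\pdim Q \leq n$, the module $\Omega^n Q$ is projective, so this sequence splits. Consequently $\Omega^n \iota$ induces a stable equivalence $\Omega^n M_n \xrightarrow{\simeq} \Omega^n(M \oplus P) \simeq \Omega^n M$, which, by the identification of $\iota$ with $\varepsilon_n$ above, is precisely the claimed stable equivalence $\Omega^n \Sigma^n \Omega^n M \xrightarrow{\simeq} \Omega^n M$.

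The main obstacle is the book-keeping step identifying $\iota$ with the counit $\varepsilon_n$ up to stable equivalence: one has to trace through how the filtration $M_n \subseteq \cdots \subseteq M_0$ is built from iterated approximations and verify that the stable isomorphisms $M_k \simeq \Sigma^k \Omega^k M$ intertwine the inclusions with the counit tower $\Sigma^{k+1}\Omega^{k+1}M \to \Sigma^k\Omega^k M$. Once this compatibility is in hand, the projective-dimension bound on $Q$ and the Horseshoe Lemma finish the argument cleanly; the grade hypothesis is used only to kill the Ext groups that would otherwise force the intermediate subquotients $M_{k-1}/M_k$ to carry nonzero projective-dimension beyond $k$.
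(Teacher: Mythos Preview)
Your argument is correct and takes a genuinely different route from the paper's. The paper's proof is a two-line citation: it invokes \cite[Lemma 2.34 a)]{AB69} to get that $\Omega^n(\varepsilon_n)$ induces an isomorphism of functors ${\rm Ext}^1(\Omega^n M, -) \xrightarrow{\cong} {\rm Ext}^1(\Omega^n\Sigma^n\Omega^n M, -)$, and then applies the Eckmann--Hilton criterion \cite[Prop.~1.41]{AB69} to conclude that $\Omega^n(\varepsilon_n)$ is a stable equivalence. You instead work directly with the Spherical Filtration Theorem (which the paper has already stated) and extract the result from the projective-dimension bound $\pdim Q \leq n$ via the Horseshoe Lemma. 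Your approach is more concrete and arguably more self-contained relative to what the paper has set up, since it reuses the filtration theorem rather than importing two further results from \cite{AB69}; the paper's approach is shorter but outsources the content.

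The one point that deserves care is exactly the one you flag: the identification of the total inclusion $\iota: M_n \hookrightarrow M \oplus P$ with the counit $\varepsilon_n$ in $\stmodsf \Lambda$. This is true, but it is not part of the \emph{statement} of the Spherical Filtration Theorem as recorded in the paper --- one has to go into the construction in \cite{AB69} to see that the filtration steps are built from the very approximation maps that assemble into the counit tower. So while your proof avoids citing \cite[Lemma 2.34 a)]{AB69} and \cite[Prop.~1.41]{AB69}, it trades this for a dependence on the \emph{proof} (not just the statement) of \cite[Thm.~2.37]{AB69}. A tempting shortcut --- observing that your Horseshoe argument already gives $\Omega^n\Sigma^n\Omega^n M \simeq \Omega^n M$ as \emph{objects}, and then using that $\Omega^n(\varepsilon_n)$ is always a stable retraction by the triangle identities --- would close the gap via Krull--Schmidt, but that is not available for general Noetherian semiperfect $\Lambda$, so the identification of $\iota$ with $\varepsilon_n$ really is needed.
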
 
\begin{proof}
\cite[Lemma 2.34 a)]{AB69} shows that $\Omega^n(\varepsilon_n): \Omega^n \Sigma^n \Omega^n M \to \Omega^nM$ induces an isomorphism of functors ${\rm Ext}^{1}(\Omega^n M, -) \xrightarrow{\cong} {\rm Ext}^1(\Omega^n \Sigma^n \Omega^n M, -)$, and this implies $\Omega^n(\varepsilon_n)$ is a stable equivalence by a result of Eckmann-Hilton \cite[Prop. 1.41]{AB69}. 
\end{proof}

We now state the key result of this section. Recall that $S$ satisfies the first non-trivial grade condition if 
\begin{align*}
{\rm Ext}^j_{\Lambda^{op}}({\rm Ext}^{j_S}_\Lambda(S, \Lambda), \Lambda) = 0 \textup{ for } j < j_S. 
\end{align*}
If $j_S = \grade S = \infty$ we interpret this condition to hold vacuously. 

\begin{prop}\label{keyproposition} Let $\Lambda$ be a Noetherian semiperfect ring. Let $S \in \modsf \Lambda$ be a simple module of finite grade $j_S < \infty$, and assume that $S$ satisfies the first non-trivial grade condition. Then $\Sigma^{j_S} \Omega^{j_S} S$ is torsionfree, and the unit map $\eta_{j_S + 1}$ gives rise to a stable equivalence 
	\begin{align*} 
		\Omega^{j_S} S \xrightarrow{\simeq} \Omega^{j_S+1} \Sigma^{j_S + 1} \Omega^{j_S} S. 
	\end{align*}
	In particular, $\dell S \leq j_S$. 
\end{prop}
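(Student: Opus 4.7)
The plan is to deduce $\dell S \leq j_S$ from the torsionfreeness criterion (Theorem \ref{theoremtwo}), so the main task is to show that $N := \Sigma^{j_S} \Omega^{j_S} S$ is torsionfree, i.e.\ that the canonical evaluation map $\sigma_N: N \to N^{**}$ is injective. When $j_S \geq 1$, the starting point is Corollary \ref{sphericalfiltrationcorollary}, which supplies a short exact sequence
$$ 0 \to N \xrightarrow{\iota} S \oplus Q \xrightarrow{\alpha} K \to 0 $$
with $Q$ projective, $K$ a $j_S$-spherical module of type $E = {\rm Ext}^{j_S}_\Lambda(S,\Lambda)$, $\iota^*$ surjective, and $\alpha$ essential in $\stmodsf \Lambda$. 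The edge case $j_S = 0$ is immediate: $S^* \neq 0$ forces $S \hookrightarrow \Lambda$, so $N = S$ is torsionfree.

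The heart of the argument is a diagram chase. Surjectivity of $\iota^*$ ensures that dualising the short exact sequence yields a genuine short exact sequence $0 \to K^* \to (S \oplus Q)^* \to N^* \to 0$, which upon dualising once more gives left exactness of $0 \to N^{**} \to (S \oplus Q)^{**} \to K^{**}$, fitting into a commutative square with the original sequence via the evaluation maps. Since $j_S \geq 1$ forces $S^{**} = 0$ while $\sigma_Q$ is an isomorphism, $\ker \sigma_{S \oplus Q} = S$. A standard chase then identifies $\ker \sigma_N$ with $\iota(N) \cap S$ inside $S \oplus Q$, which by simplicity of $S$ is either $0$ or all of $S$. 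The second alternative $S \subseteq \iota(N) = \ker \alpha$ would force $\alpha$ to factor through the projective quotient $(S \oplus Q)/S \cong Q$, contradicting essentialness of $\alpha$. Hence $\ker \sigma_N = 0$ and $N$ is torsionfree, yielding $\dell S \leq j_S$ via Theorem \ref{theoremtwo}.

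For the stable equivalence, torsionfreeness of $N$ gives $\eta_1: N \xrightarrow{\simeq} \Omega \Sigma N$ by Proposition \ref{towerproperties}(i),(iii), and applying $\Omega^{j_S}(-)$ produces a stable equivalence $\Omega^{j_S} \Sigma^{j_S} \Omega^{j_S} S \xrightarrow{\simeq} \Omega^{j_S + 1} \Sigma^{j_S + 1} \Omega^{j_S} S$. Precomposing with the unit $\eta_{j_S}: \Omega^{j_S} S \to \Omega^{j_S} \Sigma^{j_S} \Omega^{j_S} S$, which is itself a stable equivalence as a section of $\Omega^{j_S}(\varepsilon_{j_S})$ (the latter being a stable equivalence by Lemma \ref{aestheticlemma}, since $S$ satisfies the $j_S$-th grade condition), gives the desired stable equivalence $\Omega^{j_S} S \xrightarrow{\simeq} \Omega^{j_S+1} \Sigma^{j_S+1} \Omega^{j_S} S$, which agrees with $\eta_{j_S+1}$ by Lemma \ref{unitcounitlemma}. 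The main obstacle I anticipate is linking the ruling-out of $S \subseteq \ker \alpha$ to the hypothesis; this is routed entirely through the essentialness of $\alpha$, which via the proof of Corollary \ref{sphericalfiltrationcorollary} encodes the first non-trivial grade condition together with the non-vanishing ${\rm Ext}^{j_S}_\Lambda(S, \Lambda) \neq 0$.
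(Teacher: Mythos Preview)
Your proposal is correct and follows essentially the same approach as the paper. The only difference is cosmetic: where the paper verifies torsionfreeness by showing the composite $N \hookrightarrow S \oplus Q \twoheadrightarrow Q$ is injective (so $N$ sits inside a projective), you instead run the naturality square for the evaluation map $\sigma$ and use surjectivity of $\iota^*$ to identify $\ker \sigma_N$ with $\iota(N) \cap S$; but the decisive step---ruling out $S \subseteq \ker \alpha$ via essentialness of $\alpha$---and the derivation of the stable equivalence from Lemma \ref{aestheticlemma} and Lemma \ref{unitcounitlemma} are identical.
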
 
\begin{proof} We show that $\Sigma^{j_S} \Omega^{j_S} S$ is torsionfree; recall that a module is torsionfree if and only if it is a syzygy module, or equivalently a submodule of a projective. 

	When $j_S = 0$ then $S^* \neq 0$ by definition, so that $S \hookrightarrow \Lambda_\Lambda$ and $S$ is torsionfree. When $j_S \geq 1$, Proposition \ref{sphericalfiltrationcorollary} gives a short exact sequence 
		\begin{align*} 
			0 \to \Sigma^{j_S} \Omega^{j_S} S \to S \oplus Q \xrightarrow{\alpha} K \to 0 
		\end{align*} 
		with $Q$ projective and $\alpha$ essential. We claim that the composite 
		\begin{align*}
		\Sigma^{j_S} \Omega^{j_S} S \hookrightarrow  S \oplus Q \twoheadrightarrow Q
		\end{align*}
		is a monomorphism. If not, then the simple module $S = S \oplus 0 \subseteq S \oplus Q$ intersects $\Sigma^{j_S}\Omega^{j_S} S$ non-trivially and must be contained in it. In particular $\alpha(S) = 0$, contradicting that $\alpha$ is essential. Hence we have obtained an embedding $\Sigma^{j_S} \Omega^{j_S} S \hookrightarrow Q$, and therefore $\Sigma^{j_S} \Omega^{j_S} S$ is torsionfree. 

		The remainder of the proof is as in Theorem \ref{theoremtwo}: the unit map $\eta_1$ is a stable equivalence 
		\begin{align*}
			\Sigma^{j_S} \Omega^{j_S} S \xrightarrow{\simeq} \Omega \Sigma(\Sigma^{j_S} \Omega^{j_S} S)
		\end{align*}
		to which we apply $\Omega^{j_S}(-)$ and combine with the unit map $\eta_{j_S}$ 
\begin{align*}
	\Omega^{j_S} S \xrightarrow{\simeq} \Omega^{j_S} \Sigma^{j_S} \Omega^{j_S} S \xrightarrow{\simeq} \Omega^{j_S+1} \Sigma^{j_S+1} \Omega^{j_S} S.
\end{align*}
The first map $\eta_{j_S}$ is a stable equivalence by Lemma \ref{aestheticlemma} and the composition equals $\eta_{j_S+1}$, which is then a stable equivalence. This is what we had to show. 
	\end{proof}

	We can apply this immediately to computing delooping levels. Recall that by Proposition \ref{inequalities}, we have inequalities 
	\begin{align*}
		\depth \Lambda \leq \findim \Lambda^{op} \leq \dell \Lambda	
	\end{align*} 
	which improve in the Artinian case to
	\begin{align*} 
		\depth \Lambda \leq \findim \Lambda^{op} \leq \Findim \Lambda^{op} \leq \dell \Lambda.
	\end{align*} 
	As main result of this paper, we have: 
\begin{thm}\label{theoremthree} Let $\Lambda$ be a Noetherian semiperfect ring. Assume that the first non-trivial grade condition holds for each simple $S \in \modsf \Lambda$. Then 
	\begin{align*} 
		\depth \Lambda = \findim \Lambda^{op} = \dell \Lambda.	
	\end{align*}
	If $\Lambda$ is furthermore Artinian, then the stronger equality holds:
	\begin{align*}
		\depth \Lambda = \findim \Lambda^{op} = \Findim \Lambda^{op} = \dell \Lambda.	
	\end{align*}
\end{thm}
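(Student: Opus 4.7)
The plan is to combine the inequalities from Proposition \ref{inequalities} with the delooping bound from Proposition \ref{keyproposition}, collapsing everything onto the depth. Concretely, Proposition \ref{inequalities} already furnishes $\depth \Lambda \leq \findim \Lambda^{op} \leq \dell \Lambda$, and, in the Artinian case, the refined chain $\depth \Lambda \leq \findim \Lambda^{op} \leq \Findim \Lambda^{op} \leq \dell \Lambda$. Hence it suffices to close the loop by proving the reverse inequality $\dell \Lambda \leq \depth \Lambda$.

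To achieve this, I would unpack both invariants as suprema over simples: $\dell \Lambda = \sup_S \dell S$ and $\depth \Lambda = \sup_S j_S$, where $j_S = \grade S$. The argument then proceeds simple by simple. Fix a simple $S \in \modsf \Lambda$. If $j_S = \infty$, then $\depth \Lambda = \infty$ and the desired bound is vacuous. If $j_S < \infty$, then by hypothesis $S$ satisfies the first non-trivial grade condition, so Proposition \ref{keyproposition} applies and yields $\dell S \leq j_S \leq \depth \Lambda$. Taking the supremum over simples gives $\dell \Lambda \leq \depth \Lambda$, and combined with the reverse inequality we obtain $\depth \Lambda = \findim \Lambda^{op} = \dell \Lambda$.

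For the Artinian strengthening, no further work is needed: once $\depth \Lambda = \dell \Lambda$, the chain $\depth \Lambda \leq \findim \Lambda^{op} \leq \Findim \Lambda^{op} \leq \dell \Lambda$ forces all four quantities to coincide, so that $\depth \Lambda = \findim \Lambda^{op} = \Findim \Lambda^{op} = \dell \Lambda$.

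There is essentially no new obstacle to overcome at this stage, since all substantive content has already been concentrated in Proposition \ref{keyproposition}: the torsionfreeness of $\Sigma^{j_S} \Omega^{j_S} S$ extracted from the spherical filtration of Auslander--Bridger, combined with the adjunction machinery of Section \ref{subsectionadjunction}. The only minor care required is the bookkeeping around $j_S = \infty$ (where the grade condition is declared vacuous and the inequality $\dell S \leq j_S$ is automatic), and the verification that the Artinian upgrade follows purely formally from Proposition \ref{inequalities} without invoking anything new.
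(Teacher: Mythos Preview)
Your proposal is correct and follows essentially the same route as the paper: reduce to showing $\dell \Lambda \leq \depth \Lambda$, handle the case $\depth \Lambda = \infty$ trivially, and otherwise apply Proposition~\ref{keyproposition} simple by simple to get $\dell S \leq j_S$ before taking suprema. The only cosmetic difference is that the paper disposes of the infinite-depth case globally at the outset rather than per simple, but the content is identical.
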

\begin{proof} If $\depth \Lambda = \infty$ the result is immediate, and so we assume that $\depth \Lambda < \infty$. In this case $j_S = \grade S < \infty$ for each simple $S$, and Proposition \ref{keyproposition} shows that $\dell S \leq \grade S$. Taking supremum over simple modules gives $\dell \Lambda \leq \depth \Lambda$, and the result follows. 
\end{proof} 

As a consequence of the equality $\findim \Lambda^{op} = \Findim \Lambda^{op}$, we obtain a positive case of the first finitistic dimension conjecture for Artin algebras. 

\begin{cor}\label{firstfinitisticdimensionconjecture} Let $\Lambda$ be an Artinian ring. Assume that the first non-trivial grade condition holds for each simple $S \in \modsf \Lambda$. Then the first finitistic dimension conjecture holds for $\Lambda^{op}$. 
\end{cor}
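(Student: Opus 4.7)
The plan is to invoke Theorem \ref{theoremthree} directly, since this corollary is not really an independent result but rather the extraction of a single equality from the chain of equalities given there. Under the stated hypotheses on $\Lambda$ (Artinian, with all simples satisfying the first non-trivial grade condition), Theorem \ref{theoremthree} asserts the full chain
\begin{align*}
\depth \Lambda = \findim \Lambda^{op} = \Findim \Lambda^{op} = \dell \Lambda,
\end{align*}
and in particular the middle equality $\findim \Lambda^{op} = \Findim \Lambda^{op}$ is precisely the statement of the first finitistic dimension conjecture for $\Lambda^{op}$.

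So the proof I would write is essentially a one-line appeal: apply Theorem \ref{theoremthree} in its Artinian form and read off the middle equality. There is no extra work to do, no obstacle to overcome, and no supplementary lemma to invoke, because the sandwiched inequality $\findim \Lambda^{op} \leq \Findim \Lambda^{op} \leq \dell \Lambda$ from Proposition \ref{inequalities} has already been combined with the upper bound $\dell \Lambda \leq \depth \Lambda$ (which is the actual content of Theorem \ref{theoremthree}, extracted from Proposition \ref{keyproposition} via the torsionfreeness criterion) to force all four invariants to coincide. The only ``main obstacle'' in proving this corollary was the work already done upstream, namely showing that the first non-trivial grade condition for simples forces each $\Sigma^{j_S} \Omega^{j_S} S$ to be torsionfree, and hence $\dell S \leq j_S \leq \depth \Lambda$.
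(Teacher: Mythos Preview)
Your proposal is correct and matches the paper's approach exactly: the corollary is stated immediately after Theorem \ref{theoremthree} with no separate proof, the surrounding text simply noting that it follows from the equality $\findim \Lambda^{op} = \Findim \Lambda^{op}$ established there.
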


\begin{exmp}[Rings of depth zero] Let $\Lambda$ be Noetherian semiperfect, and assume that $\depth \Lambda = 0 $. The first non-trivial grade conditions for simples are then vacuous, and Theorem \ref{theoremthree} gives 
	\begin{align*} 
		\depth \Lambda = \findim \Lambda^{op} = \dell \Lambda = 0
	\end{align*} 
	and additionally $\Findim \Lambda^{op} = 0$ in the Artinian case. Hence we recover Bass's Theorem (Prop. \ref{basstheorem}) as a limiting case.   

\end{exmp}

\begin{exmp}Let $R$ be a commutative Noetherian semiperfect ring. We saw in Example \ref{commutativefirstnontrivialcondition} that simple modules always satisfy the first non-trivial grade conditions, and so Theorem \ref{theoremthree} holds for $R$. 
\end{exmp}

\begin{exmp}\label{depthgorenstein}Let $\Lambda$ be a Noetherian semiperfect ring. Assume that the terms of the minimal injective resolution of $\Lambda^{op}$ 
		\begin{align*}
			0 \to \Lambda^{op}_{\Lambda^{op}} \to I^0 \to I^1 \to \cdots \to I^{i-1} \to I^i \to I^{i+1} \to \cdots	
		\end{align*}
		satisfy $\fdim I^i \leq i+1$ for all $i \geq 0$. Then by Auslander--Reiten's Theorem (Example \ref{auslanderreitentheorem}) the $n$-th grade conditions hold for all $M \in \modsf \Lambda$ and all $n \geq 1$, and therefore Theorem \ref{theoremthree} holds for $\Lambda$. 
\end{exmp}

Auslander has studied the stronger condition that $\fdim I^i \leq i$ for all $i \geq 0$ in the situation of Example \ref{depthgorenstein}, and has shown that this condition is closed under taking opposite ring. Rings satisfying this condition are called Auslander rings. If $\Lambda$ is additionally Gorenstein, we call such a ring Auslander--Gorenstein. Putting together Theorem \ref{theoremthree} with the results already mentioned for Gorenstein rings (Example \ref{fullgorensteinexample}), we obtain: 

\begin{exmp}\label{auslandergorenstein} Let $\Lambda$ be Auslander-Gorenstein. Then $\Lambda$ satisfies Theorem \ref{theoremthree}, and we have equalities 
	\begin{align*} 
		&\depth \Lambda =  \findim \Lambda^{op} = \Findim \Lambda^{op} = \dell \Lambda = \idim \Lambda_\Lambda. 
	\end{align*}
\end{exmp} 

When $\Lambda$ is merely a Gorenstein ring, we always have (in)equalities 
\begin{align*} 
	\depth \Lambda \leq \findim \Lambda^{op} = \Findim \Lambda^{op} = \dell \Lambda = \idim \Lambda_\Lambda.	
\end{align*}
Iwanaga \cite{Iw80} has given an example of a Gorenstein Artin algebra which is not Auslander-Gorenstein. Over general Gorenstein rings it isn't clear when the last equality in fact holds. 

\begin{quest} Let $\Lambda$ be a Noetherian semiperfect Gorenstein ring. When does $\depth \Lambda = \idim \Lambda_\Lambda$? 
\end{quest}

\subsection{Higher Auslander--Bridger sequences}\label{subsectionhigherAuslanderBridgersequences} 
In this section, for any $M \in \modsf \Lambda$ of grade $n$ we further investigate the relationship between the first non-trivial grade condition 
\begin{align*}
	{\rm Ext}^j_{\Lambda^{op}}({\rm Ext}^{n}_\Lambda(M, \Lambda), \Lambda) = 0 \textup{ for } j < n 
\end{align*}

and the torsionfreeness properties of $\Sigma^n \Omega^n M$. 

When $n = 0$ this amounts to studying $M$ itself. Recall that Auslander--Bridger \cite{AB69} have constructed a natural short exact sequence of the form 
	\begin{align*}
		0 \to {\rm Ext}_{\Lambda^{op}}^1({\rm Tr}M, \Lambda) \to M \xrightarrow{\sigma} M^{**} \to {\rm Ext}_{\Lambda^{op}}^2({\rm Tr} M, \Lambda) \to 0.
	\end{align*} 
	This sequence shows that $M$ is torsionfree if and only if ${\rm Ext}^{1}_{\Lambda^{op}}({\rm Tr}M, \Lambda)=0$, and that $M$ is reflexive if and only if ${\rm Ext}^{i}_{\Lambda^{op}}({\rm Tr}M, \Lambda)=0$ for $i = 1, 2$. 

	When $n \geq 1$ the above sequence becomes somewhat degenerate, as $M^* = 0$ implies that $\sigma = 0$ and $M$ has no chance of being torsionfree nor reflexive. In this situation, what we observe here is that there are ``higher'' analogs of the Auslander--Bridger sequence which give the relevant information for $\Sigma^n \Omega^n M$. This result was also obtained prior by Hoshino--Nishida in \cite{HN02} using similar methods. 

	\begin{prop}\label{propositionreflexivesequence} Let $M \in \modsf \Lambda$ and assume that $\grade M = n$. Then there is an exact sequence of the form 
		\begin{align*}
			0& \to {\rm Ext}^{n-1}({\rm Ext}^n(M, \Lambda), \Lambda)  \\ 
			&\to {\rm Ext}^1({\rm Tr}\Sigma^n \Omega^n M, \Lambda) \to M \xrightarrow{\sigma_n} {\rm Ext}^n({\rm Ext}^n(M, \Lambda), \Lambda) 
			\to {\rm Ext}^2({\rm Tr} \Sigma^n \Omega^n M, \Lambda) \to 0.
		\end{align*}

		The following then hold:
		\begin{enumerate}[i)] 
		\item $\Sigma^n \Omega^n M$ is torsionfree if and only if ${\rm Ext}^{n-1}({\rm Ext}^n(M, \Lambda), \Lambda) = 0$ and the morphism $\sigma_n$ is injective. 
		\item $\Sigma^n \Omega^n M$ is reflexive if and only if ${\rm Ext}^{n-1}({\rm Ext}^n(M, \Lambda), \Lambda) = 0$ and the morphism $\sigma_n$ is bijective. 
	\end{enumerate}
	\end{prop}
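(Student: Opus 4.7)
The plan is to realise the five-term sequence as the low-degree exact sequence attached to a short exact sequence extracted from the projective resolution of $M$, and then read off the torsionfree and reflexive criteria.

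First, fix a projective resolution $P_\bullet \twoheadrightarrow M$ and dualise to obtain the cochain complex $P^\bullet = {\rm Hom}(P_\bullet, \Lambda)$, with cocycles $Z^i = \ker d^i$ and coboundaries $B^i = {\rm im}\, d^{i-1}$. The hypothesis $\grade M = n$ gives $Z^i = B^i$ for $i < n$ and $Z^n/B^n = L$, producing a short exact sequence of $\Lambda^{op}$-modules
\[
0 \to L \to C \to B^{n+1} \to 0, \qquad C := {\rm coker}(d^{n-1}\colon P_{n-1}^* \to P_n^*).
\]
The truncated dual $0 \to P_0^* \to \cdots \to P_n^* \to C \to 0$ is then exact, hence a projective resolution of $C$ of length $n$; redualising and using $P_i^{**} \cong P_i$ identifies ${\rm Ext}^i(C,\Lambda) = 0$ for $1 \leq i \leq n-1$ and for $i \geq n+1$, while ${\rm Ext}^n(C,\Lambda) = M$.

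Next, the identities ${\rm Tr}\,\Omega = \Sigma\,{\rm Tr}$ and ${\rm Tr}\,\Sigma = \Omega\,{\rm Tr}$ give ${\rm Tr}\,N = {\rm Tr}(\Sigma^n \Omega^n M) \simeq \Omega^n \Sigma^n {\rm Tr}\,M$ and $B^{n+1} \simeq \Omega \Sigma^n {\rm Tr}\,M$ stably, whence ${\rm Tr}\,N \simeq \Omega^{n-1} B^{n+1}$. Dimension shifting then yields
\[
{\rm Ext}^i({\rm Tr}\,N, \Lambda) \cong {\rm Ext}^{i+n-1}(B^{n+1}, \Lambda) \qquad (i \geq 1).
\]
Writing out the Ext long exact sequence of $0 \to L \to C \to B^{n+1} \to 0$ in degrees $n{-}1$ through $n{+}1$ and substituting the vanishings of ${\rm Ext}^i(C,\Lambda)$ then produces the desired five-term sequence, with $\sigma_n$ identified as the connecting morphism $M = {\rm Ext}^n(C,\Lambda) \to {\rm Ext}^n(L,\Lambda)$. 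The boundary case $n=1$ requires the additional observation that the extra map $C^* \to L^*$ appearing in the long exact sequence vanishes, which follows by direct computation using the adjointness $\tilde c(d_2\psi) = (d^1\tilde c)(\psi) = 0$ for $\tilde c \in Z^1$ and $\psi \in P_2$.

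With the sequence established, the characterisations (i) and (ii) are immediate from the classical equivalences \emph{$X$ torsionfree} $\iff {\rm Ext}^1({\rm Tr}\,X, \Lambda) = 0$ and \emph{$X$ reflexive} $\iff {\rm Ext}^1({\rm Tr}\,X,\Lambda) = {\rm Ext}^2({\rm Tr}\,X,\Lambda) = 0$, applied to $X = \Sigma^n \Omega^n M$: vanishing of ${\rm Ext}^1({\rm Tr}\,N, \Lambda)$ is equivalent to ${\rm Ext}^{n-1}(L,\Lambda) = 0$ together with injectivity of $\sigma_n$, and additionally requiring ${\rm Ext}^2({\rm Tr}\,N,\Lambda) = 0$ upgrades this to bijectivity. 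The main technical step is the stable identification ${\rm Tr}\,N \simeq \Omega^{n-1} B^{n+1}$ and its consequence for Ext shifts, which is where careful bookkeeping of how the transpose and syzygy functors interact with the dual presentation of $M$ becomes delicate.
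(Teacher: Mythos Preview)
Your proof is correct. The underlying short exact sequence is the same as the paper's: your $0 \to L \to C \to B^{n+1} \to 0$ is precisely the paper's $0 \to {\rm Ext}^1({\rm Tr}X,\Lambda) \to X \to \Omega\Sigma X \to 0$ with $X = {\rm Tr}\Omega^{n-1}M$, since $C = {\rm coker}(P_{n-1}^* \to P_n^*) = {\rm Tr}\Omega^{n-1}M$ and $B^{n+1} = C/tC \simeq \Omega\Sigma C$. The difference is in how the resulting Ext groups are identified. You read off ${\rm Ext}^i(C,\Lambda)$ directly from the finite projective resolution $0 \to P_0^* \to \cdots \to P_n^* \to C \to 0$ and handle ${\rm Ext}^i({\rm Tr}N,\Lambda)$ by the stable identification ${\rm Tr}N \simeq \Omega^{n-1}B^{n+1}$ plus dimension shifting. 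The paper instead invokes the approximation-tower machinery from Proposition~\ref{towerproperties}: the collapse $\Sigma^{n-1}\Omega^{n-1}M \xrightarrow{\simeq} M$ (from $(n{-}1)$-torsionfreeness of ${\rm Tr}M$) to identify the middle term with $M$, and torsionfreeness of $\Omega M$ for the extra vanishing needed when $n \geq 2$. Your route is more elementary and self-contained; the paper's ties the sequence into the $(\Sigma^n,\Omega^n)$ framework established earlier. The $n=1$ boundary case is handled equivalently in both: your direct computation that $C^* \to L^*$ vanishes is the concrete form of the paper's argument that $(\Omega\Sigma X)^* \to X^*$ is an isomorphism.
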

	\begin{proof}
		When $n = 0$ this is simply the Auslander--Bridger sequence. Now assume $n \geq 1$; we first use an auxiliary sequence. Note that for any $X \in \modsf \Lambda^{op}$ the Auslander--Bridger sequence of $X$ is spliced from two short exact sequences, the first of which is 
		\begin{align*}
			0 \to {\rm Ext}^1({\rm Tr}X, \Lambda) \to X \to \Omega \Sigma X \to 0.
		\end{align*} 
	Here we use the representative $\Omega \Sigma X = X/tX = {\rm im}(\sigma: X \to X^{**})$. Now, the map $X^{***} \xrightarrow{\sigma^*} X^*$ is split-surjective, and so we see that the dual map $(\Omega \Sigma X)^* \to X^*$ is also surjective and therefore an isomorphism. 

	Now, if we let $X = {\rm Tr} \Omega^{n-1} M$ and use the identities $\Sigma {\rm Tr} \simeq {\rm Tr} \Omega$ and $\Omega {\rm Tr} \simeq {\rm Tr}\Sigma$, applying ${\rm Hom}_{\Lambda^{op}}(-, \Lambda)$ to the above short exact sequence gives a long exact sequence 
		\begin{align*} 
			\xymatrix@C=10pt@R=10pt{ 
				0 \ar[r]& (\Omega \Sigma{\rm Tr}\Omega^{n-1}M)^* \ar[r]^-{\cong}& ({\rm Tr}\Omega^{n-1} M)^* \ar[r]^-{0}& {\rm Ext}^n(M, \Lambda)^*  \\ 
				\ar[r]& {\rm Ext}^1({\rm Tr}\Sigma \Omega^{n}M, \Lambda) \ar[r]& {\rm Ext}^1({\rm Tr}\Omega^{n-1}M, \Lambda) \ar[r]& {\rm Ext}^1({\rm Ext}^n(M, \Lambda), \Lambda) \\ 
				\ar[r]& {\rm Ext}^2({\rm Tr}\Sigma \Omega^{n}M, \Lambda) \ar[r]& {\rm Ext}^2({\rm Tr}\Omega^{n-1}M, \Lambda) \ar[r]&\cdots \\ 
				& \cdots \ar[r]& {\rm Ext}^{n-1}({\rm Tr}\Omega^{n-1}M, \Lambda) \ar[r]& {\rm Ext}^{n-1}({\rm Ext}^n(M, \Lambda), \Lambda) \\ 
				\ar[r]& {\rm Ext}^1({\rm Tr}\Sigma^{n}\Omega^{n}M, \Lambda) \ar[r]& {\rm Ext}^{1}({\rm Tr}\Sigma^{n-1}\Omega^{n-1}M, \Lambda) \ar[r]& {\rm Ext}^{n}({\rm Ext}^n(M, \Lambda), \Lambda) \\ 
				\ar[r]& {\rm Ext}^{2}({\rm Tr}\Sigma^{n}\Omega^{n} M, \Lambda) \ar[r]& {\rm Ext}^{2}({\rm Tr}\Sigma^{n-1}\Omega^{n-1}M, \Lambda) \ar[r]& \cdots 
	 }
		\end{align*} 

	As $\grade M = n$, the module ${\rm Tr}M$ is $(n-1)$-torsionfree and Proposition \ref{towerproperties} vi) gives stable equivalences 
		\begin{align*}
			\Sigma^{n-1}\Omega^{n-1} M \xrightarrow{\simeq} \cdots \xrightarrow{\simeq} \Sigma \Omega M \xrightarrow{\simeq} M.	 
		\end{align*}
		Moreover as $n \geq 1$, the Auslander--Bridger sequence gives ${\rm Ext}^1({\rm Tr}M, \Lambda) \xrightarrow{\cong} M$ and ${\rm Ext}^2({\rm Tr}M, \Lambda) = 0$, from which we derive the identities 
		\begin{align*}
			&{\rm Ext}^{1}({\rm Tr}\Sigma^{n-1}\Omega^{n-1}M, \Lambda) \cong M	\\ 
			&{\rm Ext}^{2}({\rm Tr}\Sigma^{n-1}\Omega^{n-1}M, \Lambda) = 0. 
		\end{align*}
We then let the morphism $\sigma_n$ be the composite 
\begin{align*}
	\sigma_n: M \xrightarrow{\cong} {\rm Ext}^1({\rm Tr}\Sigma^{n-1} \Omega^{n-1}M, \Lambda) \to {\rm Ext}^n({\rm Ext}^n(M, \Lambda), \Lambda). 
\end{align*}

Finally, when $n = 1$, substituting these identities in the long exact sequence gives us an exact sequence of the stated form. For $n \geq 2$ we require the additional vanishing ${\rm Ext}^{n-1}({\rm Tr}\Omega^{n-1}M, \Lambda) = 0$. Now, $\grade M = n$ gives us that ${\rm Tr}\ \! \Omega M$ is $(n-2)$-torsionfree, and so we have a stable equivalence 
		\begin{align*}
			\Sigma^{n-2} \Omega^{n-1} M \xrightarrow{\simeq} \Omega M.
		\end{align*}
		It follows that 
		\begin{align*}
			{\rm Ext}^{n-1}({\rm Tr}\Omega^{n-1}M, \Lambda) &= 	{\rm Ext}^1({\rm Tr}\Sigma^{n-2} \Omega^{n-1}M, \Lambda) \\
			&= {\rm Ext}^1({\rm Tr}\Omega M, \Lambda)\\ 
			&= 0
		\end{align*} 
		with the last vanishing since the module $\Omega M$ is always torsionfree. This finishes the construction of the exact sequence, and the last statements follow immediately. 
	\end{proof}

	We record some immediate consequences. 
	\begin{cor}\label{corollaryreflexivesequence} Let $M \in \modsf \Lambda$ with $\grade M = n$. Assume that the condition ${\rm Ext}^{n-1}({\rm Ext}^n(M, \Lambda), \Lambda) = 0$ holds, and let $\sigma_n: M \to {\rm Ext}^n({\rm Ext}^n(M, \Lambda), \Lambda)$ be the morphism of Proposition \ref{propositionreflexivesequence}.
		\begin{enumerate}[i)]
		\item If $\sigma_n$ is injective, then $\Omega^n M \to \Omega^{n+1}\Sigma^{n+1} \Omega^n M$ splits in $\stmodsf \Lambda$. 
		\item If $\sigma_n$ is bijective, then $\Omega^n M \to \Omega^{n+2}\Sigma^{n+2} \Omega^n M$ splits in $\stmodsf \Lambda$. 
	\end{enumerate}
	\end{cor}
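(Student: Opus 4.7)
The plan is to use Proposition \ref{propositionreflexivesequence} as an input translator: the vanishing hypothesis together with injectivity/bijectivity of $\sigma_n$ promote exactness information into torsionfreeness/reflexivity of the single module $\Sigma^n \Omega^n M$, and then the rest is machinery already in place from Section \ref{subsectionadjunction}.

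For part i), Proposition \ref{propositionreflexivesequence} i) directly tells us that $\Sigma^n \Omega^n M$ is torsionfree under the stated hypotheses. Then the torsionfreeness criterion (Theorem \ref{theoremtwo}) applies to $M$ with the chosen index $n$, and its proof produces the splitting of $\eta_{n+1}: \Omega^n M \to \Omega^{n+1}\Sigma^{n+1}\Omega^n M$ essentially by stacking the unit $\Omega^n M \xrightarrow{\eta_n} \Omega^n \Sigma^n \Omega^n M$ (which always splits against the counit $\Omega^n(\varepsilon_n)$) on top of the stable equivalence $\Omega^n \Sigma^n \Omega^n M \xrightarrow{\simeq} \Omega^{n+1} \Sigma^{n+1} \Omega^n M$ coming from torsionfreeness of $\Sigma^n \Omega^n M$ together with Proposition \ref{towerproperties} iii). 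This is essentially nothing new beyond Theorem \ref{theoremtwo}, so it should be a one-line invocation.

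For part ii), the stronger hypothesis (bijectivity of $\sigma_n$) upgrades the conclusion of Proposition \ref{propositionreflexivesequence} ii) to reflexivity of $\Sigma^n \Omega^n M$. By Proposition \ref{towerproperties} iv), reflexivity forces the unit map
\[ \Sigma^n \Omega^n M \xrightarrow{\simeq} \Omega^2 \Sigma^2(\Sigma^n \Omega^n M) = \Omega^2 \Sigma^{n+2} \Omega^n M \]
to be a stable equivalence. Applying $\Omega^n(-)$ gives a stable equivalence $\Omega^n \Sigma^n \Omega^n M \xrightarrow{\simeq} \Omega^{n+2}\Sigma^{n+2}\Omega^n M$, and precomposing with the split monomorphism $\Omega^n M \xrightarrow{\eta_n} \Omega^n \Sigma^n \Omega^n M$ (whose retraction is $\Omega^n(\varepsilon_n)$) exhibits $\Omega^n M$ as a stable retract of $\Omega^{n+2}\Sigma^{n+2}\Omega^n M$. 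To identify this composite with $\eta_{n+2}$, use Lemma \ref{unitcounitlemma}, which factors $\eta_{n+2}$ through the approximation tower as $\eta_n$ followed by $\Omega^n(\eta_2)_{\Sigma^n \Omega^n M}$.

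The main obstacle is largely bookkeeping: one must cleanly identify the composition of the unit $\eta_n$ with the $\Omega^n$-image of the unit $\eta_2$ on $\Sigma^n \Omega^n M$ as the single unit $\eta_{n+2}$ on $\Omega^n M$. This is essentially a consequence of Lemma \ref{unitcounitlemma} (which writes $\eta_{n+2}$ as a telescope of one-step units), together with the functoriality of $\Omega^n$ applied to the two-step portion of the tower for $\Sigma^n \Omega^n M$. Once this identification is made, part ii) follows by the same ``split-mono composed with stable-equivalence is a split mono'' argument as part i), simply replacing the gap $n{+}1$ by $n{+}2$.
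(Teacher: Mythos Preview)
Your proposal is correct and follows essentially the same approach as the paper: invoke Proposition \ref{propositionreflexivesequence} to obtain torsionfreeness (resp.\ reflexivity) of $\Sigma^n\Omega^n M$, then use Proposition \ref{towerproperties} iii) (resp.\ iv)) to get the stable equivalence $\Sigma^n\Omega^n M \xrightarrow{\simeq} \Omega^i\Sigma^i(\Sigma^n\Omega^n M)$ for $i=1$ (resp.\ $i=2$), apply $\Omega^n(-)$, and precompose with the split mono $\eta_n$. Your extra care in identifying the composite with $\eta_{n+i}$ via Lemma \ref{unitcounitlemma} is if anything more thorough than the paper's own proof, which simply writes down the splitting without explicitly naming it.
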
 
	\begin{proof} Use Proposition \ref{propositionreflexivesequence} and recall that when $\Sigma^n \Omega^n M$ is torsionfree then 
		\begin{align*}
			\Sigma^n \Omega^n M \xrightarrow{\simeq} \Omega \Sigma (\Sigma^n \Omega^n M)	 
		\end{align*}
		is a stable equivalence, and when it is reflexive then 
		\begin{align*}
			\Sigma^n \Omega^n M \xrightarrow{\simeq} \Omega^2 \Sigma^2 (\Sigma^n \Omega^n M)	 
		\end{align*}
		is a stable equivalence. In each case, applying $\Omega^n(-)$ as before gives a splitting 
		\begin{align*}
			\Omega^n M \xrightarrow{\eta_n} \Omega^n \Sigma^n \Omega^n M \xrightarrow{\simeq} \Omega^{n+i}\Sigma^{n+i}\Omega^{n}M	
		\end{align*} 
		for $i = 1$ or $2$ as appropriate. 
	\end{proof} 

	Proposition \ref{propositionreflexivesequence} suggests an alternative proof of the key part of Proposition \ref{keyproposition}, namely the statement that $\Sigma^{j_S} \Omega^{j_S} S$ is torsionfree whenever $S$ is a simple module of finite grade $j_S$ satisfying the $j_S$-th grade condition. If the hypothesis 
	\begin{align*}
		{\rm Ext}^{j_S - 1}({\rm Ext}^{j_S}(S, \Lambda), \Lambda) = 0
	\end{align*}
	is assumed to hold, Proposition \ref{propositionreflexivesequence} states that $\Sigma^{j_S} \Omega^{j_S} S$ is torsionfree if and only if the morphism
	\begin{align*}
		\sigma_{j_S}: S \to {\rm Ext}^{j_S}({\rm Ext}^{j_S}(S, \Lambda), \Lambda)	 
	\end{align*}
	is injective. As $S$ is simple, this is equivalent to $\sigma_{j_S} \neq 0$. If we impose the further conditions
	\begin{align*} 
		{\rm Ext}^{j}({\rm Ext}^{j_S}(S, \Lambda), \Lambda) = 0 \textup{ for } j \leq j_S - 2 
	\end{align*} 
	so that $\grade \ \! {\rm Ext}^{j_S}(S, \Lambda) \geq j_S$, then Proposition \ref{keyproposition} already shows that $\Sigma^{j_S} \Omega^{j_S} S$ is torsionfree and so $\sigma_{j_S} \neq 0$ in this case. What follows is a more direct explanation relating the ``higher double dual'' morphism $\sigma_{j_S}$ to the grade conditions. 
	
	Observe that the condition $\grade \ \! {\rm Ext}^{j_S}(S, \Lambda) \geq j_S$ implies the existence\footnote{We have only formally defined $\sigma_{n}$ for modules of grade $n$, but the same construction works implicitly knowing only that grade $\geq n$.} of the map $\sigma_{j_S}$ also for the module $M = {\rm Ext}^{j_S}(S, \Lambda)$, of the form 
	\begin{align*}
		\sigma_{j_S}: {\rm Ext}^{j_S}(S, \Lambda) \to {\rm Ext}^{j_S}({\rm Ext}^{j_S}({\rm Ext}^{j_S}(S, \Lambda), \Lambda), \Lambda). 
	\end{align*}
	We may then compose the two maps as 
	\begin{align*}
		{\rm Ext}^{j_S}(S, \Lambda) \xrightarrow{\sigma_{j_S}} {\rm Ext}^{j_S}({\rm Ext}^{j_S}({\rm Ext}^{j_S}(S, \Lambda), \Lambda), \Lambda) \xrightarrow{ {\rm Ext}^{j_S}(\sigma_{j_S},\ \Lambda)} {\rm Ext}^{j_S}(S, \Lambda). 
	\end{align*}

	When $j_S = 0$, this is simply the classical section-retract
	\begin{align*}
		S^* \xrightarrow{\sigma} S^{***} \xrightarrow{\sigma^*} S^*
	\end{align*}
	and the identity $\sigma^* \circ \sigma = {\rm id}_{S^*}$ is well-known. The following conjecture is the analogous statement for modules of positive grade. 

	\begin{conj} Let $M$ be a module of grade $n$ such that $\grade {\rm Ext}^n(M, \Lambda) \geq n$. Then the maps
	\begin{align*}
		{\rm Ext}^{n}(M, \Lambda) \xrightarrow{\sigma_{n}} {\rm Ext}^{n}({\rm Ext}^{n}({\rm Ext}^{n}(M, \Lambda), \Lambda), \Lambda) \xrightarrow{ {\rm Ext}^{n}(\sigma_{n},\ \Lambda)} {\rm Ext}^{n}(M, \Lambda). 
	\end{align*}
		satisfy ${\rm Ext}^n(\sigma_n, \Lambda) \circ \sigma_n = {\rm id}_{{\rm Ext}^n(M, \Lambda)}$. 
	\end{conj}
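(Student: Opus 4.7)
The plan is to interpret the conjectural identity as the triangle identity for a contravariant self-adjunction of the functor $E_n := {\rm Ext}^n_\Lambda(-, \Lambda)$, restricted to the subcategory of modules $M$ satisfying $\grade M \geq n$ and $\grade E_n(M) \geq n$, with $\sigma_n$ as the unit. Once such an adjunction structure is established, the identity is formal from the triangle identity, in direct parallel with the classical identity $\sigma_M^* \circ \sigma_{M^*} = {\rm id}_{M^*}$ recovered as the case $n = 0$.

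First, I would verify that $\sigma_n$ is natural in $M$ among modules satisfying the hypotheses, by tracing functoriality through each step of the construction in Proposition \ref{propositionreflexivesequence} (the spherical filtration, the Auslander--Bridger exact sequence for $\Sigma^n \Omega^n M$, and the connecting morphisms of the long exact sequence). Second, I would realise $\sigma_n$ as the degree-zero part of derived biduality. For $M$ of grade $n$, the complex ${\rm R}{\rm Hom}(M, \Lambda)$ lies in cohomological degrees $\geq n$ with $H^n = E_n(M)$, and the hypothesis $\grade E_n(M) \geq n$ forces the hyper-Ext spectral sequence
\begin{align*}
E_2^{p, q} = {\rm Ext}^p\bigl({\rm Ext}^{-q}(M, \Lambda), \Lambda\bigr) \Rightarrow H^{p+q}\bigl({\rm R}{\rm Hom}({\rm R}{\rm Hom}(M, \Lambda), \Lambda)\bigr)
\end{align*}
to have its only relevant contribution to $H^0$ from the smallest diagonal term $E_n E_n(M)$. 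The associated edge morphism then gives a canonical map $M \to E_n E_n(M)$, which I would identify with $\sigma_n$ by comparing with the Auslander--Bridger construction.

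With $\sigma_n$ realised as the unit of derived biduality, the conjectural identity follows by applying the classical triangle identity for ${\rm R}{\rm Hom}(-, \Lambda)$ to the object ${\rm R}{\rm Hom}(M, \Lambda)$ and then taking $H^n$: the relevant composite is the identity in the derived category, and under the grade hypothesis on $E_n(M)$ this restricts to $E_n(\sigma_n) \circ \sigma_{n, E_n(M)} = {\rm id}_{E_n(M)}$ on the bottom cohomology.

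The main obstacle will be rigorously controlling the derived-category comparison when ${\rm R}{\rm Hom}(E_n(M), \Lambda)$ is unbounded: the grade hypothesis $\grade E_n(M) \geq n$ controls the Ext groups of $E_n(M)$ only in degrees $< n$, while contributions from $p > n$ could in principle obstruct spectral sequence degeneracy or shift the edge morphism. An alternative approach avoiding derived categories would construct, using the higher Auslander--Bridger sequences of Section \ref{subsectionhigherAuslanderBridgersequences}, an explicit section of $\sigma_{n, E_n(M)}$ inside the stable category and verify directly that it coincides with $E_n(\sigma_n)$; this bypasses convergence issues but requires a more delicate naturality argument.
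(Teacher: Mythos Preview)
The paper provides no proof of this statement: it is explicitly labelled a \emph{Conjecture} and left open, with the remark that a proof would yield an alternative route to Proposition~\ref{keyproposition}. There is thus nothing to compare your proposal against.

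On its own merits, your derived-biduality strategy is the natural framework, but the gap you flag is real and is precisely why the statement remains conjectural. The triangle identity for $D = {\rm R}{\rm Hom}(-,\Lambda)$ does give that
\[
E_n(M) = H^n(DM) \xrightarrow{H^n(\eta_{DM})} H^n(DDDM) \xrightarrow{H^n(D\eta_M)} H^n(DM) = E_n(M)
\]
composes to the identity. What is missing is that this composite factors through $E_nE_nE_n(M)$ via the prescribed maps $\sigma_{n,E_n(M)}$ and $E_n(\sigma_{n,M})$. The hypothesis $\grade E_n(M) \geq n$ controls the bottom of the spectral sequence for $DDM$, but says nothing about $E_nE_n(M)$, so $H^n(DDDM)$ need not coincide with $E_nE_nE_n(M)$; one only gets edge maps relating them, and it is not automatic that the identity on $E_n(M)$ survives passage through those edge maps. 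Separately, identifying the paper's $\sigma_n$ (built from the long exact sequence of Proposition~\ref{propositionreflexivesequence}) with the derived edge map is a genuine verification you have not carried out. Your alternative stable-category approach faces the same obstruction in different clothing: constructing a section of $\sigma_{n,E_n(M)}$ and showing it equals $E_n(\sigma_{n,M})$ is the content of the conjecture, not a route around it. In short, the proposal locates the right conceptual home for the statement but does not close it, consistent with its status as open.
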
 
	An immediate corollary is the non-vanishing of $\sigma_n: M \to {\rm Ext}^n({\rm Ext}^n(M, \Lambda), \Lambda)$, which is what we required in the case that $M = S$. 
	
	A proof of the above conjecture would give an alternative proof of the key part of Proposition \ref{keyproposition}, and therefore of the main theorem of this paper (Theorem \ref{theoremthree}), which bypasses the use of the Spherical Filtration Theorem.

	\section{The delooping level of a local Noetherian ring}\label{sectiondeloopinglevellocalnoetherian} 

	We finally arrive at the first result stated in the introduction. 

\begin{thm}\label{commutativelocalnoeth} Let $R = (R, \m, k)$ be a commutative local Noetherian ring. Then we have equalities $\depth R = \findim R = \dell R$. More precisely, when $R$ has depth $d$, the unit map of $(\Sigma^{d+1}, \Omega^{d+1})$ gives rise to a stable equivalence 
	\begin{align*} 
		\Omega^d k \xrightarrow{\simeq} \Omega^{d+1} \Sigma^{d+1} \Omega^d k 
	\end{align*}
	and $d = \depth R$ is the lowest index for which this holds. 
\end{thm}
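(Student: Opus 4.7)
The plan is to reduce this directly to the main theorem (Thm.~\ref{theoremthree}) and to Proposition~\ref{keyproposition}, both of which were engineered for precisely this situation. The key observation is that over $R = (R, \m, k)$ the only simple module (up to isomorphism) is $k = R/\m$, so the suprema defining $\depth R$ and $\dell R$ both collapse to a single invariant of $k$.

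First, I would verify the hypothesis of Theorem~\ref{theoremthree}. Example~\ref{commutativefirstnontrivialcondition} already records that $k$ satisfies the first non-trivial grade condition: ${\rm Ext}^{j_k}_R(k, R)$ has finite length supported at $\{\m\}$, and ${\rm Ext}^j_R(-, R)$ annihilates such modules for $j < j_k$. Theorem~\ref{theoremthree} then yields $\depth R = \findim R = \dell R$, which is the first claim, and notably bypasses the Auslander--Buchsbaum formula. Since $\depth R \leq \dim R < \infty$ for any local Noetherian ring, the common value $d$ is finite, so $j_k = \grade k = d$.

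Second, for the refined statement, I would apply Proposition~\ref{keyproposition} directly with $S = k$ and $j_k = d$: it produces torsionfreeness of $\Sigma^d \Omega^d k$ together with the desired stable equivalence $\eta_{d+1}: \Omega^d k \xrightarrow{\simeq} \Omega^{d+1} \Sigma^{d+1} \Omega^d k$. For minimality of $d$, suppose $\eta_{n+1}$ is a stable equivalence (in particular a split monomorphism) for some $n \geq 0$; by the universal characterisation of the delooping level (Thm.~\ref{theoremone} and Cor.~\ref{corollaryone}) this forces $\dell k \leq n$, hence $d = \dell R = \dell k \leq n$. Thus $d$ is the smallest such index. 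No serious obstacle arises: all the substantive content has been carried out in Sections~\ref{sectiondepthdelooping} and~\ref{sectionAuslanderBridgerconditions}, and the proof amounts to tracing through the definitions in the commutative local case, using that $k$ is the unique simple module and that the depth of a local Noetherian ring is finite.
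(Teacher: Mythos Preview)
Your proof is correct and follows essentially the same approach as the paper: verify the first non-trivial grade condition via Example~\ref{commutativefirstnontrivialcondition}, apply Theorem~\ref{theoremthree} for the equalities, and invoke Proposition~\ref{keyproposition} for the stable equivalence. You are slightly more explicit than the paper about the finiteness of $\depth R$ and about deducing minimality of $d$ from Theorem~\ref{theoremone}/Corollary~\ref{corollaryone} combined with $\dell R = d$, but this is just spelling out what the paper leaves implicit.
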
 
\begin{proof}
	Simple modules over commutative Noetherian rings satisfy the first non-trivial grade conditions (Example \ref{commutativefirstnontrivialcondition}). The equality $\depth R = \findim R = \dell R$ then follows from Theorem \ref{theoremthree}, and the second statement is Proposition \ref{keyproposition}. 
\end{proof}

Next, we say that a Noetherian ring $\Lambda = (\Lambda, \m, D)$ is local (or scalar local) if $\Lambda/ \m = D$ is a division ring for $\m := \rad \Lambda$.  Note that $\Lambda$ is automatically semiperfect. Theorem \ref{commutativelocalnoeth} generalises to: 

\begin{thm}\label{noncommutativelocalnoeth} Let $\Lambda = (\Lambda, \m, D)$ be a local Noetherian ring, with the property that ${\rm Ext}^i_{\Lambda}(D, \Lambda)$ and ${\rm Ext}^i_{\Lambda^{op}}(D, \Lambda)$ have finite length for each $i \geq 0$. We have equalities 
	\begin{align*} 
		\depth \Lambda^{op} = \depth \Lambda = \findim \Lambda^{op} = \findim \Lambda = \dell \Lambda^{op} = \dell \Lambda.	
	\end{align*}
	Moreover when $\Lambda$ has finite depth $d$, the unit map gives rise to a stable equivalence 
	\begin{align*}
		\Omega^d D \xrightarrow{\simeq} \Omega^{d+1}\Sigma^{d+1} \Omega^d D
	\end{align*}
	and $d = \depth \Lambda$ is the lowest index for which this holds. 
\end{thm}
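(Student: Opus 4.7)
The plan is to adapt the proof of Theorem \ref{commutativelocalnoeth}: verify the first non-trivial grade condition for the unique simple $D$ on both sides and then apply Theorem \ref{theoremthree} to both $\Lambda$ and $\Lambda^{op}$. Since $\Lambda$ is scalar local with residue division ring $D$, the depth on each side equals the grade of $D$ on the respective side; set $j := \depth \Lambda = \grade D$ (as a right module) and $j' := \depth \Lambda^{op}$.

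As an analog of Example \ref{commutativefirstnontrivialcondition}, the key structural observation is that any nonzero finite length right (resp. left) $\Lambda$-module $X$ has only $D$ as composition factor, so a composition-length induction on long exact sequences of Ext yields $\grade X \geq \grade D$ on the appropriate side. Combined with the finite length of the left module ${\rm Ext}^j_\Lambda(D, \Lambda)$, this reduces the first non-trivial grade condition for the right simple to the inequality $j \leq j'$, and symmetrically for the left. Hence both conditions hold if and only if $j = j'$, and establishing this equality is the main obstacle.

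To prove $j = j'$, assume without loss of generality that $j \leq j'$, so that the first non-trivial grade condition for the right simple $D$ holds. When $j \geq 1$, Corollary \ref{sphericalfiltrationcorollary} produces a short exact sequence
\begin{align*}
0 \to \Sigma^{j}\Omega^{j} D \to D \oplus Q \to K \to 0
\end{align*}
with $K$ a $j$-spherical module of type $E := {\rm Ext}^{j}_\Lambda(D, \Lambda)$. Dualizing a finite projective resolution of $K$ of length $\leq j$ yields a finite free resolution of $E$ by left modules of length $\leq j$, so $E$ has projective dimension $\leq j$ as a left module. Standard minimality of finite resolutions over the local ring $\Lambda$ (entries in $\m$) forces the grade of $E$ as a left module to be bounded above by this projective dimension, giving $\grade E \leq j$ (the edge case $j = 0$ is even simpler: $D$ is torsionfree, so $D^{**} \neq 0$ and $\grade D^* = 0$). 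On the other hand, the observation above gives $\grade E \geq j'$ since $E$ is a nonzero finite length left module. Combining yields $j' \leq j$, whence $j = j'$.

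With both first non-trivial grade conditions now verified, Theorem \ref{theoremthree} applied to $\Lambda$ and $\Lambda^{op}$ gives all six stated equalities. The stable equivalence $\Omega^d D \xrightarrow{\simeq} \Omega^{d+1}\Sigma^{d+1}\Omega^d D$ and the minimality of $d = \depth \Lambda$ as the lowest index for which this holds are then immediate from Proposition \ref{keyproposition} applied to the right simple $D$ with $j_D = d$.
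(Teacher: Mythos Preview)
Your overall strategy matches the paper's: reduce to showing $\depth \Lambda = \depth \Lambda^{op}$, assume without loss of generality that $j \leq j'$ so that the first non-trivial grade condition holds for the right simple, invoke Theorem \ref{theoremthree}, and then establish $j' \leq j$ by showing that the nonzero finite-length left module $E = {\rm Ext}^j_\Lambda(D, \Lambda)$ has $\grade_{\Lambda^{op}} E \leq j$. The composition-series reduction and the final appeal to Proposition \ref{keyproposition} for the stable equivalence are also correct.

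The gap is in the step where you bound $\pdim_{\Lambda^{op}} E \leq j$. Dualising a length-$j$ projective resolution $0 \to P_j \to \cdots \to P_0 \to K \to 0$ of the $j$-spherical module $K$ yields an exact sequence
\[
0 \to K^* \to P_0^* \to \cdots \to P_j^* \to E \to 0,
\]
and this is a projective resolution of $E$ only when $K^* = 0$. The definition of $j$-spherical controls ${\rm Ext}^i_\Lambda(K, \Lambda)$ for $1 \leq i \leq j$ and says nothing about $K^*$; from Corollary \ref{sphericalfiltrationcorollary} one only gets $K^* \hookrightarrow (D \oplus Q)^* \cong Q^*$, which has no reason to vanish. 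More decisively, the conclusion $\pdim_{\Lambda^{op}} E \leq j$ is false already in the commutative case: if $R$ is a Gorenstein local ring of depth $d \geq 1$ which is not regular, then $E = {\rm Ext}^d_R(k, R) \cong k$ has infinite projective dimension. So no argument that passes through a bound on $\pdim_{\Lambda^{op}} E$ can work.

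The paper obtains ${\rm Ext}^j_{\Lambda^{op}}(E, \Lambda) \neq 0$ by a different route, using the higher Auslander--Bridger sequence (Proposition \ref{propositionreflexivesequence}). Once $\Sigma^j\Omega^j D$ is known to be torsionfree by Proposition \ref{keyproposition}, that sequence shows the map $\sigma_j: D \to {\rm Ext}^j_{\Lambda^{op}}({\rm Ext}^j_\Lambda(D, \Lambda), \Lambda)$ is injective, hence ${\rm Ext}^j_{\Lambda^{op}}(E, \Lambda) \neq 0$; the finite-length hypothesis on $E$ then forces ${\rm Ext}^j_{\Lambda^{op}}(D, \Lambda) \neq 0$, giving $j' \leq j$.
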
 
\begin{proof} Without loss of generality we may assume that $\depth \Lambda \leq \depth \Lambda^{op}$, and that $\depth \Lambda < \infty$. Letting $d := \depth \Lambda$, the finite length hypothesis on Ext coupled with $\Lambda$ being local imply the $d$-th grade condition for $D$, and we obtain equalities $\depth \Lambda = \findim \Lambda^{op} = \dell \Lambda$ from Theorem \ref{theoremthree}. We next show that $\depth \Lambda = \depth \Lambda^{op}$, and we will obtain the remaining equalities by symmetry. 

	It suffices to show $\depth \Lambda^{op} \leq \depth \Lambda = d$. Since the $d$-th grade condition holds for $D$, Proposition \ref{keyproposition} shows $\Sigma^d \Omega^d D$ is torsionfree and so ${\rm Ext}^1_{\Lambda^{op}}({\rm Tr}\Sigma^d \Omega^d D, \Lambda) = 0$. Applying the higher Auslander--Bridger sequence (Proposition \ref{propositionreflexivesequence}) gives an exact sequence of the form 
	\begin{align*}
	\cdots \to {\rm Ext}^1_\Lambda({\rm Tr}\Sigma^d \Omega^d D, \Lambda) \to D \xrightarrow{\sigma_d} {\rm Ext}^d_{\Lambda^{op}}({\rm Ext}^d_\Lambda(D, \Lambda), \Lambda) \to \cdots 
	\end{align*} 
	from which we see that $\sigma_d$ is injective, and so $ D \hookrightarrow {\rm Ext}^d_{\Lambda^{op}}({\rm Ext}^d_\Lambda(D, \Lambda), \Lambda)$. In particular, ${\rm Ext}^d_{\Lambda^{op}}({\rm Ext}^d_\Lambda(D, \Lambda), \Lambda) \neq 0$, and since ${\rm Ext}^d_\Lambda(D, \Lambda)$ has finite length this forces ${\rm Ext}^d_{\Lambda^{op}}(D, \Lambda) \neq 0$. We obtain $\depth \Lambda^{op} \leq d = \depth \Lambda$, which is what we needed. Therefore $\depth \Lambda = \depth \Lambda^{op}$. The equalities $\depth \Lambda^{op} = \findim \Lambda = \dell \Lambda^{op}$ then hold by symmetry, and the last statement is Proposition \ref{keyproposition}.  
\end{proof} 
\begin{rem} Wu--Zhang have extended the Auslander--Buchsbaum Formula to the setting of Theorem \ref{noncommutativelocalnoeth}. In particular the four first equalities 
	\begin{align*}
		\depth \Lambda^{op} = \depth \Lambda = \findim \Lambda^{op} = \findim \Lambda 
	\end{align*}
	follow immediately from \cite[Theorem 1.1]{WZ00}. The above theorem then provides an independent proof and extends the result to the delooping level. 
\end{rem}

The symmetry of invariants $\depth \Lambda = \depth \Lambda^{op}$, $\findim \Lambda = \findim \Lambda^{op}$ and $\dell \Lambda = \dell \Lambda^{op}$ for local Noetherian rings is intriguing, and it would be interesting to extend it to the big finitistic dimension. We record this as another question. 

\begin{quest} Let $\Lambda = (\Lambda, \m, D)$ be as in Theorem \ref{noncommutativelocalnoeth}. Do we also have equalities ${\rm Findim}\ \! \Lambda^{op} = {\rm Findim}\ \! \Lambda$? 
\end{quest}

\section{A Cohen-Macaulay inequality}\label{sectioncohenmacaulayinequality} 

We end this paper on a curiosity. 

\begin{thm}\label{CMinequality}Let $\Lambda$ be a Noetherian semiperfect ring, and consider the inequality $\Findim \Lambda^{op} \leq \dell \Lambda$.
	\begin{enumerate}[i)] 
	\item The inequality characterises Cohen-Macaulay rings amongst commutative local Noetherian rings, in which case we have equality. 
	\item The inequality holds for all Gorenstein rings, in which case we have equality.  
	\item The inequality holds for all Artinian rings. 
\end{enumerate} 
\end{thm}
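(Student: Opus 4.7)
The plan is to handle the three parts separately, with each reducing to results already in place. Part (iii) requires no new work: it is literally the last inequality of the chain $\depth \Lambda \leq \findim \Lambda^{op} \leq \Findim \Lambda^{op} \leq \dell \Lambda$ established in Proposition \ref{inequalities} for Artinian $\Lambda$. Part (ii) likewise comes for free from Example \ref{fullgorensteinexample}, which asserts that over a Gorenstein ring $\Lambda$ the quantities $\findim \Lambda^{op}$, $\Findim \Lambda^{op}$, $\dell \Lambda$ and $\idim \Lambda_\Lambda$ all agree, so the inequality $\Findim \Lambda^{op} \leq \dell \Lambda$ holds with equality.

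The substantive content sits in Part (i). Let $R = (R, \m, k)$ be a commutative local Noetherian ring. The strategy is to combine two closed-form evaluations into a numerical inequality and then invoke the standard characterisation of Cohen-Macaulayness. The first evaluation is the Bass-Gruson-Raynaud theorem cited in the introduction, which gives $\Findim R = \dim R$. The second is Theorem \ref{commutativelocalnoeth}, which gives $\dell R = \depth R$. Under these identifications, the inequality $\Findim R \leq \dell R$ becomes $\dim R \leq \depth R$, and combining this with the always-valid reverse inequality $\depth R \leq \dim R$ forces $\depth R = \dim R$, which is the definition of Cohen-Macaulay. Conversely, if $R$ is Cohen-Macaulay then $\depth R = \dim R$ and one reads off $\Findim R = \dim R = \depth R = \dell R$, so the inequality holds with equality as claimed.

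The main obstacle is not technical but rhetorical: both inputs to Part (i) are already substantial, namely a classical theorem of Bass and Gruson-Raynaud and the new Theorem \ref{commutativelocalnoeth}, and one must be careful to present the argument as a genuine combination of these two facts with the tautological observation $\depth R \leq \dim R$, rather than as a circular repackaging. No further machinery beyond what the paper has set up should be required, and the three parts can simply be dispatched in the order (iii), (ii), (i) above.
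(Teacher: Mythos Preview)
Your proposal is correct and matches the paper's proof essentially line for line: part (iii) is Proposition \ref{inequalities}, part (ii) is Example \ref{fullgorensteinexample}, and part (i) combines $\Findim R = \dim R$ (Bass--Gruson--Raynaud) with $\dell R = \depth R$ (Theorem \ref{commutativelocalnoeth}) and the standard inequality $\depth R \leq \dim R$ to characterise Cohen-Macaulayness. The only cosmetic difference is that the paper writes $\Findim R^{op} = \Findim R$ to keep the notation consistent with the general statement, but of course this is automatic for commutative $R$.
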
 
\begin{proof} i). Let $R$ be commutative local Noetherian. We have $\dell R = \depth R$ and $\Findim R^{op} = \Findim R = \dim R$, and so 
\begin{align*}
	\Findim R^{op} \leq \dell R &\iff \dim R \leq \depth R \\ &\iff \dim R = \depth R \\ &\iff R \textup{ is Cohen-Macaulay.} 
\end{align*}

ii). Let $\Lambda$ be Gorenstein. We have $\Findim \Lambda^{op} = \dell \Lambda = \idim \Lambda_\Lambda$ by Example \ref{fullgorensteinexample} and so equality holds. 

iii). This is part of Proposition \ref{inequalities}. 
\end{proof}

This result is one of few situations where both noncommutative Gorenstein rings and noncommutative Artinian rings fall in line with their commutative counterpart. However, Theorem \ref{CMinequality} also highlights a discrepancy in the Artinian case, and suggests the natural question: 

\begin{quest}\label{CMconjecture} Let $\Lambda$ be Artinian. Do we have $\Findim \Lambda^{op} = \dell \Lambda$? 
\end{quest} 

A positive answer would suggest that the delooping level of a ring serves as the natural replacement for the depth in the noncommutative setting. 

\bibliographystyle{alpha} 
\bibliography{depth} 

\end{document}